  \definecolor{dark-red}{rgb}{0.4,0.15,0.15}
\newcommand{\C}{\mathbb{C}} 
\newcommand{\A}{\mathbb{A}}
\renewcommand{\P}{\mathbb{P}}
\newcommand{\Z}{\mathbb{Z}}
\newcommand{\R}{\mathbb{R}}
\newcommand{\iso}{\cong}
\newcommand{\rH}{\widetilde{H}}
\newcommand{\EG}{\mathbf{E}}
\newcommand{\BG}{\mathbf{B}}
\newcommand{\EGt}{\widetilde{\EG} }
\newcommand{\mcal}[1]{\mathcal{#1}}
\newcommand{\ul}[1]{\underline{\smash{#1}}}
\newcommand{\MZ}{\mathbf{M}\ul{\Z}}
\newcommand{\MA}{\mathbf{M}\ul{A}}
\numberwithin{equation}{section} %Fiddles with numbering system of the following.
\theoremstyle{plain} 
\newaliascnt{theorem}{equation}  
\newtheorem{theorem}[theorem]{Theorem}  
\newaliascnt{proposition}{equation}  
\newtheorem{proposition}[proposition]{Proposition}
\newaliascnt{lemma}{equation}    
\newaliascnt{corollary}{equation}  
\newtheorem{corollary}[corollary]{Corollary}
\newaliascnt{claim}{equation}  
 \theoremstyle{definition}
\newaliascnt{definition}{equation}  
\newtheorem{definition}[definition]{Definition}
\newaliascnt{example}{equation}  
\newaliascnt{remark}{equation}   
\newaliascnt{condition}{equation}
\newaliascnt{notationconvention}{equation}
\author{Mircea Voineagu}
\email{m.voineagu@unsw.edu.au}
\address{Department of Mathematics and Statistics, The Red Centre, Kensington Campus, UNSW Sydney, NSW 2052 Australia}
\begin{document}
\title{About Bredon motivic cohomology of a field}
 
  \maketitle
  \begin{abstract}
We prove that, over a perfect field, Bredon motivic cohomology can be computed by Suslin-Friedlander complexes of  equivariant equidimensional cycles. Partly based on this result we completely identify Bredon motivic cohomology of a quadratically closed field and of a euclidian field in weights 1 and $\sigma$.  We also prove that Bredon motivic cohomology of an arbitrary  field  in weight 0 with integer coefficients coincides (as abstract groups) with Bredon cohomology of a point. 
\end{abstract} 
  \tableofcontents
 \section{Introduction}
  Motivated by Voevodsky's definition of motivic cohomology, we introduced in \cite{HOV} (together with J.Heller and P.A. Ostvaer) an equivariant generalization of motivic cohomology called Bredon motivic cohomology for $G-$ equivariant smooth schemes over a field $k$ and for a finite group $G$. In a subsequent  paper \cite{HOV1} (together with J.Heller and P.A. Ostvaer), we extended this definition to an equivariant motivic cohomology bigraded by a pair of virtual $C _2-$representations and defined in the $\Z/2$-equivariant stable $\A^1-$ homotopy category. 
  
  We proved in \cite{HOV1} that for a  smooth $C _2-$ scheme $X$ over the field of complex numbers there is a natural cycle map into Bredon cohomology of $X(\C)$, which in a certain range of indexes becomes an isomorphism with finite coefficients. This result allows us to obtain partial computations of Bredon motivic cohomology groups of a complex variety  in terms of Bredon cohomology of its complex points. However, apart from applications of this theorem, there is a lack of computations of Bredon motivic cohomology groups.
  
  In this paper we give complete computations of Bredon motivic cohomology groups with integer coefficients in weight $0$ of an arbitrary field. We also compute Bredon motivic cohomology of a field of characteristic zero with integer coefficients of bi-index given by one-dimensional $\Z/2$-representations. We also give complete computations  for quadratically closed fields and euclidian fields of characteristic zero in weights  $1$ and $\sigma$. The strategy to prove the results in weights $1$ and $\sigma$ (see section 5) is to show that Bredon motivic cohomology can be computed by Suslin-Friedlander complexes of equivariant equidimensional cycles (see section 3). This together with a result of Nie \cite{Nie} gives a computation of the Bredon cohomology of a field of characteristic zero in bidegree $(a+2q\sigma,b+q\sigma)$ (see Proposition \ref{Nie}). Using this computation and after a careful analysis of the maps that appear in the l.e.s. given by the cofiber sequence $C _{2+}\rightarrow S^0\rightarrow S^\sigma$ we complete the computation of Bredon motivic cohomology with integer coefficients in weights 1 and $\sigma$ of certain fields (see section 5 Theorems \ref{com}, \ref{sigm}, \ref{neg1}, \ref{pos1}). 
  
  A consequence of this computation is that Bredon motivic cohomology with $\Z/2$ coefficients of a quadratically closed field of characteristic zero in weights 1 or $\sigma$ coincides (as abstract groups) with Bredon cohomology of a point with $\Z/2-$coefficients (but they have different computations with integer coefficients). For this see Corollary \ref{sif} and Corollary \ref{fcoe}. 
  
  For example, in weight 1 with $\Z/2-$coefficients we have the following (see Corollary \ref{fcoe}): 
  \begin{theorem} Let $k$ be a field of characteristic zero. If $k$ is a quadratically closed field or a euclidian field we have   
\begin{equation}
H^{a+p\sigma,1}_{C _2}(k,\Z/2)= \left\{
\begin{array}{ll}
       \Z/2\oplus k^*/k^{*2} & 0\leq -a\leq p-1\\
      \Z/2\oplus k^*/k^{*2} & 2<a\leq -p\\
      \Z/2 & a=2\leq -p, 0\leq -a=p\\
      k^*/k^{*2} & a=1<p, 2<a=-p+1\\
    0 & otherwise\\
\end{array} 
\right. 
\end{equation}

Notice that in the case of a quadratically closed field (for example any algebraically closed field such as $\C$) $k^*/k^{*2}=0$ and in the case of a euclidian field (for example $\R$) $k^*/k^{*2}\simeq \Z/2$.
  \end{theorem}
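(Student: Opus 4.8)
The plan is to deduce this mod-$2$ statement from the integer-coefficient computations of weight-$1$ Bredon motivic cohomology established in \ref{com}, \ref{sigm}, \ref{neg1} and \ref{pos1}, using the Bockstein long exact sequence. Applying the cohomology functor $H^{\bullet,1}_{C_2}(k,-)$ to the short exact sequence of coefficients
\begin{equation}
0 \to \Z \xrightarrow{\;\cdot 2\;} \Z \to \Z/2 \to 0,
\end{equation}
which is represented by a cofiber sequence $\MZ \xrightarrow{\cdot 2} \MZ \to \M{\Z/2}$ of equivariant motivic Eilenberg--MacLane spectra, produces a long exact sequence in $RO(C_2)$-graded cohomology whose connecting homomorphism raises the integral degree $a$ by one while leaving both the weight and the $\sigma$-grading $p$ unchanged. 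Splitting this into its kernel--cokernel pieces gives, for each pair $(a,p)$, a short exact sequence
\begin{equation}
0 \to H^{a+p\sigma,1}_{C_2}(k,\Z)/2 \to H^{a+p\sigma,1}_{C_2}(k,\Z/2) \to {}_2H^{(a+1)+p\sigma,1}_{C_2}(k,\Z) \to 0,
\end{equation}
where ${}_2M$ denotes the subgroup of $M$ annihilated by $2$.

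The computation is then a matter of substituting the integral values region by region. In weight $1$ the integral groups appearing are $\Z$, $\Z/2$ and $k^*$; their mod-$2$ reductions $M/2$ are respectively $\Z/2$, $\Z/2$ and $k^*/k^{*2}$, while their $2$-torsion subgroups ${}_2M$ are respectively $0$, $\Z/2$ and $\mu_2(k)=\{\pm1\}\cong\Z/2$ (here using $\ch k = 0$). Hence an integral $k^*$ sitting in degree $a$ contributes a summand $k^*/k^{*2}$ to the mod-$2$ group in the same degree $a$ and a summand $\Z/2$ to the mod-$2$ group in degree $a-1$, and analogously for the other two groups. Reading off which of these contributions survive in each region yields precisely the four nonzero cases of the statement, the degenerate cases such as $a=2=-p$ or $-a=p$ being exactly those where one of the two contributions drops out.

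No separate splitting argument is needed in order to write the answer as a direct sum: all three terms of the short exact sequence are $\mathbb{F}_2$-vector spaces, and every extension of $\mathbb{F}_2$-vector spaces splits. The two stated families of fields are handled simultaneously, since $k^*/k^{*2}=0$ for a quadratically closed field (every element is a square) and $k^*/k^{*2}\cong\Z/2$ for a euclidean field, as noted in the statement.

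I expect the real difficulty to lie not in the Bockstein machinery but in the careful matching of the integral data at the boundaries of the four ranges, where one must confirm that multiplication by $2$ is injective or surjective exactly where the range descriptions predict. A misidentification of the $2$-torsion, or of the image of multiplication by $2$, at a single transition degree would shift an endpoint by one and corrupt several adjacent cases, so this endpoint analysis is where I would concentrate the verification.
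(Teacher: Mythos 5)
Your overall strategy is exactly the one the paper uses: the statement is Corollary~\ref{fcoe}, and the paper obtains it (as it does explicitly for the weight-$\sigma$ companion, Corollary~\ref{sif}, and already in the proof of Proposition~\ref{prop}) from the integral computations of Theorems~\ref{com} and~\ref{neg1} via the split universal-coefficient sequence
\begin{equation*}
0 \to H^{a+p\sigma,1}_{C_2}(k,\Z)\otimes\Z/2 \to H^{a+p\sigma,1}_{C_2}(k,\Z/2) \to {}_2H^{a+1+p\sigma,1}_{C_2}(k,\Z) \to 0 ,
\end{equation*}
so your Bockstein setup, the degree shift of the connecting map, and the splitting argument (all three terms are $\mathbb{F}_2$-vector spaces) are all in order and match the paper.

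The problem is your substitution dictionary, and it is not a harmless slip. The integral groups that actually occur in weight $1$ (Theorems~\ref{com} and~\ref{neg1}) are $\Z/2$, $k^*$ and $k^*/k^{*2}$; the group $\Z$ never appears there (it appears in weight $0$, Corollary~\ref{c33}), whereas $k^*/k^{*2}$, which you omit, appears throughout both cones and is the group whose rule you need most: $(k^*/k^{*2})\otimes\Z/2 = k^*/k^{*2}$ and ${}_2(k^*/k^{*2}) = k^*/k^{*2}$, since every element has order dividing $2$. In roughly half of the nonzero range the $k^*/k^{*2}$ summand of the mod-$2$ answer arises precisely as the torsion contribution ${}_2H^{(a+1)+p\sigma,1}_{C_2}(k,\Z)$ of an integral $k^*/k^{*2}$ sitting one degree up; for instance, in the positive cone at $a=-m$ with $m$ even, $0\le m<p-1$, the integral group in degree $a$ is $\Z/2$ and the entire $k^*/k^{*2}$ summand comes from degree $a+1$. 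With your stated dictionary, in which the only groups carrying $2$-torsion are $\Z/2$ and $k^*$ (with ${}_2\Z=0$), these summands would be lost, and the resulting table would be wrong for euclidian fields (for quadratically closed fields $k^*/k^{*2}=0$, so the error is invisible there, which may be why it slipped through). Once the missing entry is added, the region-by-region substitution does reproduce the stated answer, so the gap is local and easily repaired, but as written the endpoint analysis you yourself flag as the crux would be carried out with an incorrect table of torsion contributions.
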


  We know that  $H^{1,1}(k,\Z)=K^1 _M(k)=k^*$ (\cite{MVW}, \cite{Tot}). As a generalization of this result we have the following result for Bredon motivic cohomology (Corollary \ref{gh}):
  \begin{theorem} For an arbitrary  field $k$ of characteristic zero we have  $H^{\sigma,\sigma}_{C _2}(k,\Z)=\Z/2$, $H^{1,\sigma}_{C _2}(k,\Z)=k^{*2}$, $H^{\sigma,1}_{C _2}(k,\Z)=\Z/2$ and $H^{1,1}_{C _2}(k,\Z)=k^*$. 
\end{theorem}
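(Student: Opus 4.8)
The plan is to read off the four groups as the ``one–dimensional'' corners of the weight~$1$ and weight~$\sigma$ computations, organising the bookkeeping around the long exact sequence attached to the cofiber sequence $C_{2+}\to S^0\to S^\sigma$. Smashing this sequence with $\spec k_+$ and taking reduced Bredon motivic cohomology with $\Z$–coefficients gives, for each fixed weight $w\in\{1,\sigma\}$, a long exact sequence
\begin{equation*}
\cdots\to H^{a+(p-1)\sigma,\,w}_{C_2}(k,\Z)\xrightarrow{g^*} H^{a+p\sigma,\,w}_{C_2}(k,\Z)\xrightarrow{f^*} H^{a+p,\,w'}(k,\Z)\xrightarrow{\partial} H^{(a+1)+(p-1)\sigma,\,w}_{C_2}(k,\Z)\to\cdots,
\end{equation*}
where $w'$ is the underlying weight obtained by restricting $w$ to the trivial group (so $w'=1$ in both cases) and $H^{\bullet,w'}(k,\Z)$ is ordinary motivic cohomology. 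This uses the free/induced identification $H^{\bullet}_{C_2}((C_2)_+\wedge\spec k_+)\cong H^{\bullet}(\spec k_+)$ and the $S^\sigma$–suspension isomorphism. I would run this sequence with $(a,p)\in\{(1,0),(0,1)\}$, i.e.\ at exactly the four bidegrees in the statement.

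The only relevant non–equivariant input is weight~$1$: since $\Z(1)\simeq\G_m[-1]$ one has $H^{m,1}(k,\Z)=k^*$ for $m=1$ and $0$ otherwise, and in both corner cases $a+p=1$, so the term $H^{a+p,1}(k,\Z)$ is $k^*$. The single surviving connecting map is then $\partial\colon H^{1,1}(k,\Z)=k^*\to H^{1,w}_{C_2}(k,\Z)$, and the heart of the matter is to identify it with the squaring homomorphism $u\mapsto u^2$ on $k^*$. Granting this, exactness produces the stated values at once: the image $k^{*2}$ of squaring gives $H^{1,\sigma}_{C_2}(k,\Z)=k^{*2}$, while its kernel $\mu_2(k)=\{\pm1\}\cong\Z/2$ (here $\ch k=0$) is exactly $\operatorname{im}(f^*)$ at the two bidegrees $(a,p)=(0,1)$, yielding $H^{\sigma,\sigma}_{C_2}(k,\Z)=\Z/2$ and $H^{\sigma,1}_{C_2}(k,\Z)=\Z/2$. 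Finally $H^{1,1}_{C_2}(k,\Z)=k^*$ is recovered from the same sequence as an extension of $k^*/k^{*2}$ (the part detected in $H^{1+\sigma,1}_{C_2}$) by the image $k^{*2}$ of $\partial$, reassembling $k^*$.

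To make this rigorous I would first reduce to cycle computations via the Suslin–Friedlander model of Section~3, and invoke Nie's computation (Proposition~\ref{Nie}) together with the weight~$1$ and weight~$\sigma$ Theorems~\ref{com}, \ref{sigm}, \ref{neg1}, \ref{pos1} to pin down the flanking Bredon groups $H^{0,\sigma}_{C_2}$, $H^{0,1}_{C_2}$ and $H^{1+\sigma,1}_{C_2}$, so that the four corner groups are genuinely isolated in the long exact sequence (in particular so that $f^*$ is injective at the $(a,p)=(0,1)$ slots and $H^{1,\sigma}_{C_2}$ equals $\operatorname{im}\partial$). The main obstacle is precisely the identification of $\partial$ with squaring and the careful tracking of $f^*,g^*,\partial$ promised in the introduction: one must show that the boundary class of a unit $u\in k^*=H^{1,1}(k,\Z)$ is represented by $u^2$, equivalently that the transfer–restriction composite along $C_{2+}\to S^0\to S^\sigma$ acts through $1+\tau$ for the generator $\tau$ of $C_2$ and collapses to squaring on the weight–shifted group, and that no further differential or extension problem intervenes. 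Once this single map is understood, the four isomorphisms follow formally and uniformly for every field $k$ of characteristic zero.
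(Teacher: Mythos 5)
Your skeleton---the long exact sequence attached to $C_{2+}\to S^0\to S^\sigma$ in weights $1$ and $\sigma$, plus an identification of the boundary map with squaring---is indeed the paper's skeleton (Propositions \aref{sigma} and \aref{prop}, assembled into Corollary \aref{gh}), and your bookkeeping of $\mu_2$, $k^{*2}$ and $k^*/k^{*2}$ is the right picture. But the step you yourself flag as ``the main obstacle'' is where all the content lives, and your plan for it does not close. Knowing that the transfer--restriction composite induces multiplication by $2$ (the paper's Proposition \aref{comp}, via Spanier--Whitehead self-duality of $C_{2+}$) constrains $\partial$ only if the \emph{other} factor of that composite is already understood. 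For weight $1$ the paper must first prove that the restriction $H^{1,1}_{C_2}(k,\Z)\to H^{1,1}(k,\Z)$ is an isomorphism, and that is genuinely hard: it uses the motivic isotropy sequence, the $(2\sigma-2,\sigma-1)$-periodicity of Borel cohomology, the computation $\rH^{3+p\sigma,1}_{C_2}(\EGt C_2,\Z)=\Z/2$ (Proposition \aref{case1}, which needs Totaro's $H^{2,1}(\BG C_2,\Z)=\Z/2$), and the weight-$\sigma$ results, all in order to kill the negative-cone groups $H^{1-\sigma,1}_{C_2}(k,\Z)$ and $H^{2-\sigma,1}_{C_2}(k,\Z)$---groups your outline never mentions. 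For weight $\sigma$ the paper does not identify $\partial$ with squaring at all: it computes $H^{2\sigma,\sigma}_{C_2}(k,\Z)=\Z/2$ and $H^{2\sigma-1,\sigma}_{C_2}(k,\Z)=k^*$ from Nie's theorem (these bidegrees, not the flanking groups you list, are the ones the equidimensional-cycle model reaches), shows the map $H^{2\sigma-1,\sigma}_{C_2}(k,\Z)\to H^{1,1}(k,\Z)$ is the identity by a diagonal argument, and then proves injectivity of $\alpha=f^*:H^{\sigma,\sigma}_{C_2}(k,\Z)\to H^{1,1}(k,\Z)$ by a separate sheaf-cohomology diagram with $O^{*C_2}$-coefficients; only then does $H^{1,\sigma}_{C_2}(k,\Z)=\coker(\alpha)=k^*/\{\pm 1\}\cong k^{*2}$ follow. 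Without inputs of this kind, ``the boundary class of $u$ is represented by $u^2$'' is not yet a meaningful assertion, since the target group $H^{1,w}_{C_2}(k,\Z)$ is precisely what is being computed; in particular $H^{0,\sigma}_{C_2}(k,\Z)=0$ (needed for your injectivity of $f^*$) is an \emph{output} of this analysis, not something Nie's computation provides.

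Two further concrete problems. First, your recovery of $H^{1,1}_{C_2}(k,\Z)=k^*$ as ``an extension of $k^*/k^{*2}$ by $k^{*2}$, reassembling $k^*$'' is invalid: such extensions are not unique (for $k=\Q$ the split extension $k^{*2}\oplus k^*/k^{*2}$ has infinitely many elements of order $2$, while $\Q^*$ has exactly one), and identifying $\operatorname{im}\partial\cong k^{*2}$ in the first place already presupposed knowing $H^{1,1}_{C_2}(k,\Z)$---a circle. The paper avoids any extension problem by proving the restriction map is an isomorphism outright. Second, invoking Theorems \aref{com}, \aref{sigm}, \aref{neg1} and \aref{pos1} to pin down flanking groups is both circular---in the paper those theorems are proved after, and from, Propositions \aref{sigma} and \aref{prop}---and inapplicable, since they hold only for quadratically closed, euclidean, or formally real fields, whereas the statement concerns an arbitrary field of characteristic zero.
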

  
  In section $4$ we analyze the cohomology groups of the shift complexes $Z _{top}(n\sigma)(k)$ and $Z _{top}(n)(k)$ for an arbitrary field $k$. We obtain  computations for weight 0 Bredon motivic cohomology with integer coefficients of a field. 
  
 As a consequence, we obtain that Bredon motivic cohomology with integer coefficients  in weight $0$ of an arbitrary field coincides (as abstract groups) with Bredon cohomology of a point with integer coefficients. In conclusion, with $\Z/2$-coefficients we have:
  \begin{theorem} For an arbitrary field $k$ we have 
\begin{equation}
H^{a+p\sigma,0} _{C _2}(k,\Z/2)= \left\{
\begin{array}{ll}
      \Z/2 & 0\leq -a \leq p \\
      \Z/2 & 1<a\leq -p\\
    0 & otherwise\\
\end{array} 
\right. 
\end{equation}
  \end{theorem}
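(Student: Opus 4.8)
The plan is to deduce the $\Z/2$-statement from the weight $0$ computation with integer coefficients carried out in Section 4. By the main theorem of Section 3, weight $0$ Bredon motivic cohomology is computed by the Suslin--Friedlander complexes of equivariant equidimensional cycles, which in weight $0$ are the shift complexes $Z_{\mathrm{top}}(n)(k)$ and $Z_{\mathrm{top}}(n\sigma)(k)$; the conclusion of Section 4 is that the integral groups $H^{a+p\sigma,0}_{C_2}(k,\Z)$ are independent of $k$ and coincide with the $RO(C_2)$-graded Bredon cohomology $H^{a+p\sigma}_{C_2}(\mathrm{pt};\underline{\Z})$ of a point. Granting this, what remains is the passage to $\Z/2$-coefficients, and the target answer is recognisably the $RO(C_2)$-graded Bredon cohomology of a point with $\underline{\Z/2}$-coefficients: two cones of copies of $\Z/2$ separated by a gap.

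I would carry out the passage directly with $\Z/2$-coefficients, which both bypasses the integral extension problems and realises the inductive mechanism announced in the introduction. The engine is the cofiber sequence $C_{2+}\to S^0\to S^\sigma$, which upon smashing with $\spec k$ and applying $H^{\star,0}_{C_2}(-,\Z/2)$ yields, for each $(a,p)$, a long exact sequence
\[
\cdots \to H^{a+(p-1)\sigma,0}_{C_2}(k,\Z/2)\xrightarrow{g^*} H^{a+p\sigma,0}_{C_2}(k,\Z/2)\xrightarrow{f^*} H^{a+p,0}(k,\Z/2)\xrightarrow{\partial}\cdots.
\]
Its third term is \emph{non-equivariant}: by the free-orbit identification the $C_{2+}$-term computes the motivic cohomology of $k$, and since restriction sends $\sigma$ to the trivial line it lands in total degree $a+p$ and weight $0$. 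As weight $0$ motivic cohomology of a field is $\Z/2$ concentrated in degree $0$, this term is $\Z/2$ exactly when $a+p=0$ and vanishes otherwise, coupling the $\sigma$-rows $p-1$ and $p$ across a single non-equivariant group.

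This reduces the theorem to an induction on the $\sigma$-degree $p$ outwards from the base row $p=0$, where $H^{a+0\cdot\sigma,0}_{C_2}(k,\Z/2)=\Z/2$ for $a=0$ and vanishes otherwise. Using the sequence to pass from row $p-1$ up to row $p$ propagates the positive cone $0\le -a\le p$; reading the same sequence downwards, from row $p$ to row $p-1$, propagates the negative cone $1<a\le -p$; and along the line $a=1$ the non-equivariant inputs vanish, forcing the gap. Because the non-equivariant term is nonzero in only one total degree, in almost every bidegree two of the three maps vanish, so $g^*$ is forced to be an isomorphism and a single $\Z/2$ is carried along each cone; this is what makes the induction go through formally away from the edges.

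The hard part will be the \emph{careful analysis of the maps} $f^*,g^*,\partial$ at the edges of the two cones --- the bidegrees with $a+p\in\{0,1\}$ together with the boundary values $a\in\{0,1,2\}$ --- where the long exact sequence does not collapse on its own. Concretely, I expect that one must show the restriction map $f^*$ to be an isomorphism at the bottom of the positive cone (which annihilates the would-be classes just below it, e.g.\ throughout the row $p=-1$) but to \emph{vanish} along the inner edge of the negative cone (which is what creates the classes at $a=2$, $p\le-2$); correspondingly $\partial$ must vanish on the positive side and be injective on the negative side. This dichotomy cannot be read off from exactness alone, and is exactly where the explicit description of the shift complexes $Z_{\mathrm{top}}(n)(k)$ and $Z_{\mathrm{top}}(n\sigma)(k)$ of Section 4 (equivalently, the integral answer fed through the Bockstein sequence) must enter. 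It is this edge analysis that fixes the shared diagonal boundary $a=-p$ of the two cones and the inner edges $a=0$ and $a=2$ delimiting the gap along $a=1$, and thereby distinguishes the $\Z/2$-answer from the integral one.
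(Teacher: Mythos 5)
Your structural picture of the computation is accurate, but as written your proposal conflates two different proofs, and it is worth separating them. The paper's own proof of this theorem is the degenerate version of your opening paragraph: once Section 4 has computed the integral weight-$0$ groups (Propositions \ref{neg} and \ref{case2}, the negative-cone proposition, Corollary \ref{c33}), the $\Z/2$-statement follows in one line from the split universal-coefficient sequence
$$0\rightarrow H^{a+p\sigma,0}_{C_2}(k,\Z)\otimes\Z/2\rightarrow H^{a+p\sigma,0}_{C_2}(k,\Z/2)\rightarrow {}_{2}H^{a+1+p\sigma,0}_{C_2}(k,\Z)\rightarrow 0,$$
exactly as the paper argues in weight $\sigma$ in Corollary \ref{sif}; no induction and no analysis of maps is needed. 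Since you grant the integral answer at the outset, this already finishes the proof, and your mod-$2$ induction is redundant under your own hypotheses: if the integral answer may be fed through the Bockstein at the edge bidegrees, it may be fed through it at every bidegree, and then there is nothing left for the long exact sequence to do.

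Your induction does become a genuinely different proof if it is made self-contained, i.e.\ run entirely with $\Z/2$-coefficients without ever computing the integral groups --- and it can be. The missing ingredient, which you correctly sense ``cannot be read off from exactness alone'' but do not name, is not the integral answer: it is Proposition \ref{comp} reduced mod $2$, namely that the composites of restriction and transfer (in either order, for the cohomology of a field) are multiplication by $2$, hence vanish with $\Z/2$-coefficients. Granting this one fact, together with the base row $p=0$ (trivial action, Proposition \ref{triv}), your edge dichotomy is exactly right and provable: at the outer diagonal of the positive cone, restriction is injective purely by exactness and induction (the row below vanishes at that spot), so the vanishing of the composite forces the transfer into that spot to be zero and the cone propagates; on the negative side, the classes are themselves transfers (starting with $a=2$, $p=-2$, created because row $p=-1$ dies, which in turn follows from restriction being an isomorphism at $(a,p)=(0,0)$), so their restrictions vanish mod $2$, which simultaneously propagates that cone and creates the next diagonal entry. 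This is the same engine the paper runs with integer coefficients in Corollary \ref{c33} and the negative-cone proposition, where Proposition \ref{comp} produces nontrivial multiplication-by-$2$ maps instead of zero maps; what your mod-$2$ version buys is bypassing the integral computation and its case distinctions entirely, at the price of having to identify the transfer/restriction composites rather than citing exactness alone.
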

  In section $3$ we show that Suslin-Friedlander complexes of $C _2-$equivariant equidimensional cycles compute Bredon motivic cohomology over a perfect field (see Theorem \ref{id}). 
  In section $6$ we discuss Borel motivic cohomology of a field in weights 0,1 and $\sigma$. 
  
  {\bf{Notation and Conventions.}} We let $G=\Z/2$ and we view it as a group scheme over $k$ via $G=\sqcup {Spec(k)}$. Sometimes we write $C _2$ for the same group. We let $GSch/k$ be the category of $G-$equivariant separated quasi-projective schemes  with $G$-equivariant morphisms of schemes and $GSm/k$  be its subset of smooth G-equivariant quasi-projective schemes over $k$. 
 
 We write $A(V)=Spec(Sym(V^\vee))$ for the affine k-scheme associated to a k-vector space $V$ and $\P(V)=Proj(A(V))$ for the projective scheme.
 
  We denote $S _x=\{g\in G|gx=x\}=G/Gx$ the set-theoretical stabilizer of a point $x$ and $Gx=G\times^{S _x}\{x\}$ the orbit of a point $x$. In general for a $Z\subset X$ we write $GZ=\cup _{g\in G} gZ$ for the orbit of $Z$. 
  
  For an abelian group $G$ we write $_2G:=\{g\in G| 2g=0\}$. We write $k[G]=1\oplus \sigma$ for the regular $C_2-$representation. We denote by $\sigma$ the real $C _2$-representation given by $\R$ with the $-1$ action. We denote by $S^\sigma$ the sphere associated to the $C _2-$representation $\sigma$ and in general by $S^V$ the topological sphere associated to the $C _2$-representation $V$. 
  
  We use the following notation for indexes in $RO(G)$-graded Bredon cohomology $H^{p,q} _{Br}(X,\Z):=H^{p+q\sigma} _{Br}(X,\Z)$ for any virtual $C _2-$ representation $V=p+q\sigma$. We write $H^{n,q}(X)$ for the motivic cohomology of the scheme $X$.
 
 {\bf{Acknowledgements:}} The author would like to thank Jeremiah Heller for many useful discussions. The author would also like to thank the University of Illinois at Urbana-Champaign, where the final stages of this project were completed. The author thanks the referee for many useful comments.
 \section{Preliminaries}
  Let $V$ be a $C _2$-representation and $$T^V:=\P(V\oplus 1)/\P(V)\simeq \A(V)/\A(V)\setminus\{0\}$$ be the motivic representation sphere with the isomorphism in the $C _2-$ equivariant $\A^1-$ homotopy category (\cite{Del}, \cite{HKO}, \cite{HOV1}). For $n\geq 0$ we write $T^{nV}={T^V}\wedge...\wedge T^V$ with $n$ copies of $T^V$ in the right term. More generally we have $T^{V\oplus W}=T^V\wedge T^W$ for any two $C _2$-representations $V$ and $W$.
   
   We have by definition that $$A(V):=a_{eNis} C _*A _{tr,C _2}(T^V)[-2a-2b\sigma]$$ is the Bredon motivic complex associated to the $C _2-$representation $V=a+b\sigma$. Here $A _{tr,C _2}(X)=Cor _k(-,X)^{C _2}\otimes A$ and $a _{eNis}$ is a sheafification in equivariant Nisnevich topology (\cite{Del}, \cite{HOV}, \cite{HOK}).  Here $Cor _k(X,Y)^{C _2}$ is the free abelian group generated by elementary equivariant correspondences. An equivariant elementary correspondence from $X$ to $Y$ is a correspondence of the form $\tilde {Z}=Z+g _1Z +g _2Z +...+g_nZ$ where $Z$ is an elementary correspondence (finite and surjective over a connected component of $X$) and $g _i$ range over a set of coset representatives for $Stab(Z)=\{g\in G| gZ=Z\}$.
  
   The equivariant Nisnevich topology is the smallest Grothendieck topology containing the elementary Nisnevich covers $\{U\rightarrow X,Y\rightarrow X\}$ associated to the equivariant distinguished square in $C_2Sch/k$
  $$
\xymatrix{
W\ar[r]\ar[d]& Y\ar[d] _{p} \\
U \ar@{^{(}->}[r] _i & X.
} 
$$
Here $p:Y\rightarrow X$ is an equivariant etale morphism and $U\hookrightarrow X$ is an equivariant open embedding such that $(X\setminus U) _{red}\simeq (Y\setminus W) _{red}$. According to \cite{HOV} and \cite{HOK} an equivariant etale map $p:Y\rightarrow X$ is a $C _2-$ equivariant Nisnevich cover if for any $x\in X$ there is $y\in Y$ such that $p(y)=x$, $k(x)\simeq k(y)$ and the set-theoretical stabilizers coincide $S _x\simeq S _y$. According to \cite{HOV} the points in the equivariant Nisnevich topology are Hensel semi-local affine $C_2-$schemes with a single closed orbit. Any semilocal
Henselian affine $C_2-$scheme over $k$ with a single orbit is equivariantly isomorphic to $Spec(O^h _{A,Gx})$, the $Spec$ of the henselization of the semilocal ring $O _{A,Gx}$ where $A$ is some affine $C_2-$scheme and $x\in A$. 

For a different generalization of Nisnevich topology in the equivariant setting called the fixed point Nisnevich topology see \cite{H}. The fixed point Nisnevich topology is a topology finer than the equivariant Nisnevich topology, but coarser than the equivariant etale topology used by Thomasson \cite{Tho}. For example if $X$ is a $G-$scheme with free action and $X^{e}$ is the same scheme with trivial action then $X^{e}\times G\rightarrow X$ is a cover in fixed point topology, but not in the equivariant Nisnevich topology \cite{HOV}. 

 The shift in the definition of Bredon motivic complexes is given by the tensor product in  $D^-(C _2Cor _k)$ with the invertible complexes $Z _{top}(a+b\sigma)$ associated to any virtual $C _2-$representation $V=a+b\sigma$ \cite{HOV}. We have that $$Z _{top}(\sigma):=Cone(\Z _{tr}(C _2)^{C _2}\rightarrow \Z _{tr}(k)^{C _2})\in D^-(C _2Cor _k)$$ is the complex associated to the sign sphere $S^\sigma$. 
 Here $D^-(C _2Cor _k)$ is the derived category of bounded above chain complexes of equivariant Nisnevich sheaves with equivariant transfers on $C _2Sm/k$ and it has a tensor product induced by $$\Z _{tr}(X)^{C _2}\otimes^{tr} \Z _{tr}(Y)^{C _2}:=\Z _{tr}(X\times Y)^{C _2}.$$ 
   For an abelian group $A$ we have that  $\{\MA_n=\Z _{tr,C _2}(T^{nk[C _2]})\} _n$ defines the Bredon motivic cohomology spectrum $\MA$ in the stable $C _2$-equivariant motivic homotopy category $SH _{C _2}$ \cite{HOV1}. The construction of $SH _{C _2}$ as a stabilization with respect to the motivic sphere $T^{k[C _2]}$ is recalled in the appendix of \cite{HOV1} and it initially appeared in \cite{HKO} as a tool to study Hermitian K-theory of fields. We write $[-,-] _{SH _{C _2}}$ for maps in $SH _{C _2}$.
   
    The equivariant $\A^1-$homotopy was introduced initially by Voevodsky \cite{Del} in order to understand motivic Eilenberg-Mac Lane spaces. In the $C _2$-equivariant $\A^1$-homotopy category (see \cite{Del}, \cite{HKO}) we have the following spheres: the usual sphere $S^1$ with trivial action, the sign sphere $S^\sigma$ with the conjugation action, the Tate sphere $S^1 _t=(\A^1\setminus\{0\},1)$ with the trivial action and the sign Tate sphere $S^\sigma _t=(\A^1(\sigma)\setminus\{0\},1)$ with the action $x\rightarrow x^{-1}$. 
   
   Using the notation from \cite{HOV1} we define below the following motivic spheres bi-indexed by virtual $C _2$-representations: $$S^{a+p\sigma,b+q\sigma}:= S^{a-b}\wedge S^{(p-q)\sigma}\wedge S^{b} _{t}\wedge S^{q\sigma} _{t}.$$ Notice that this notation is slightly different from the one used in \cite{HKO} (see \cite{HOV1} for the translation between these two notations). This notion is suitable for comparison of Bredon motivic cohomology and Bredon cohomology as the complex realization of $Re(S^{a+p\sigma,b+q\sigma})=S^{a+p\sigma}$, where the right side defines the usual topological sphere associated to the $C _2-$ representation $V=a+p\sigma:=\R^a\oplus \R^{-p}$. 
   
   The definition of Bredon motivic cohomology in $SH _{C _2}$ is the following:
 \begin{definition} (see \cite{HOV}) \label{defB} The Bredon motivic cohomology of a motivic $C _2$-spectrum $E$ with coefficients in an abelian group $A$ is defined by 
 $$\rH^{a+p\sigma,b+q\sigma}(E,A):=[E, S^{a+p\sigma,b+q\sigma}\wedge M\underline{A}] _{SH _{C _2}}.$$
 We call the virtual $C _2$-representation $V=a+p\sigma$ the cohomology index and the virtual $C _2-$representation $W=b+q\sigma$ the weight index. 
 \end{definition}
 According to \cite{HOV} this definition for $E=\Sigma^\infty _{T^{k[G]}}X _+$, where $X$ is a smooth $C _2-$scheme and the motivic sphere $T^{k[G]}=T\wedge T^{\sigma}=S^1\wedge S^1 _t\wedge S^\sigma \wedge S^\sigma _t$ (\cite{HKO}, \cite{HOV1}), coincides with the definition of Bredon motivic cohomology given by the hypercohomology in equivariant Nisnevich topology (or equivariant Zariski topology \cite{HOV}) of the Bredon motivic complexes $A(V)$ defined above. 
 
 Based on an equivariant cancellation theorem proved in \cite{HOV} we can conclude that if $V=b+q\sigma$ is a virtual representation and $W=c+d\sigma$ is a representation such that $V\oplus W$ is an actual $C _2-$ representation then for any $C _2-$scheme $X$ we have
  $$H^{a+p\sigma,b+q\sigma}(X,\Z)= \mathbb{H}^{a} _{eNis}(X _+\wedge T^W, \Z(V\oplus W)[2c+(2d+p)\sigma]).$$

 For the relationship between the Bredon motivic cohomology and Edidin-Graham equivariant higher Chow groups \cite{EG} (which are a generalization of Totaro's  Chow groups of classifying spaces \cite{T}) see \cite{HOV1}.
 
 For a pointed motivic $C _2-$space $\chi$ we have that the  map in the equivariant $\A^1$-homotopy category $$C _{2+}\wedge \chi\simeq C _{2+}\wedge\chi^e\rightarrow \chi^e$$ induces an isomorphism $\rH^{a+p\sigma,b+q\sigma}_{C_2}(C _{2+}\wedge \chi,\Z)=\rH^{a+p,b+q}(\chi^{e},\Z)$, with $\chi^e$ being the motivic space with the forgotten action \cite{HOV1} and the right side is given by the usual motivic cohomology. If the $C _2$-scheme $X$ has trivial action then according to \cite{HOV1} we have that $H^{a,b}_{C_2}(X,\Z)=H^{a,b}(X,\Z)$. Proposition \ref{triv} in section $3$ gives a slight generalization of this last result. If $X$ is a smooth quasi-projective $C _2-$scheme
 with free $C _2$-action then $H^{n,m} _{C _2}(X,A)=H^{n,m}(X/C _2,A)$ \cite{HOV1}. 
 
 In the stable equivariant $\A^1-$ homotopy category there is the following motivic isotropy sequence 
 $$\EG\Z/2 _+\rightarrow S^0\rightarrow \EGt\Z/2$$
 defined as a colimit of $C _2-$homotopy cofiber sequences
 $$\A(n\sigma)\setminus\{0\} _+\rightarrow S^0\rightarrow S^{2n\sigma,n\sigma}=S^{n\sigma} _{t}\wedge S^n _{t}.$$
 By definition $\EG\Z/2=colim _n \A(n\sigma)\setminus\{0\}$. The first map is induced by the projection $\A(n\sigma)\setminus\{0\}\rightarrow Spec(k)$. We have that $\EGt\Z/2=colim _nS^{n\sigma} _{t}\wedge S^n _{t}$. The geometric classifying space is defined as the quotient of $\EG \Z/2$ by the free $C _2-$action $$\BG \Z/2:=\EG \Z _2/\Z _2=colim _n\A(n\sigma)\setminus\{0\}/\Z/2.$$ In Proposition 2.9 and Theorem 5.4 in \cite{HOV1} we showed that Bredon motivic cohomology of $\EG \Z/2$ is periodic of periodicity $(2\sigma-2,\sigma-1)$ and that Bredon motivic cohomology of $\EGt Z/2$ is periodic with periodicities $(\sigma-1,0)$ and $(\sigma-1,\sigma-1)$.

 In the stable equivariant $\A^1-$homotopy category we have the basic cofiber sequence
$$
C_{2\,+}\wedge \MZ \xrightarrow{p} \MZ \xrightarrow{i} S^{\sigma}\wedge \MZ
$$
which gives rise to the natural long exact sequence for any $C _2-$ scheme $X$
\begin{align} \label{connec}
\cdots \to 
H^{a-1+(p+1)\sigma, b+q\sigma}_{C_2}(X) & \xrightarrow{\delta} 
H^{a+p, b+q}_{\mcal{M}}(X) \xrightarrow{p_*} 
H^{a +p\sigma, b+q\sigma}_{C_2}(X) 
\\
& \xrightarrow{i_*} 
H^{a+(p+1)\sigma, b+q\sigma}_{C_2}(X) \xrightarrow{\delta} 
H^{a+p+1, b+q}_{\mcal{M}}(X)\to \cdots,
\end{align}

For a complex scheme $X$, the basic cofiber sequence of the classical equivariant homotopy theory given by $C _{2+}\rightarrow S^0\rightarrow S^\sigma$ induces a  l.e.s.  for the Bredon cohomology of the complex points of $X$. The complex realization $X\rightarrow X(\C)$ defined on $C _2Sm/\C\rightarrow C _2Top$ extends to a map $SH _{C _2}(\C)\rightarrow SH _{C _2}$ between the stable equivariant $\A^1-$ homotopy category over $\C$ and the classical stable equivariant homotopy category. According to \cite{HOV1} the complex realization of the motivic spectrum $\MA$ gives the spectrum that defines Bredon cohomology. The realization functor also induces a map between the two long exact sequences induced by the cofiber sequence $C _{2+}\rightarrow S^0\rightarrow S^\sigma$ in the motivic equivariant homotopy category and in the classical equivariant homotopy category.

The connecting map in the l.e.s. \ref{connec} is induced by the map $$S^\sigma\stackrel{\delta}{\rightarrow} C _{2+}\wedge S^\sigma=C _{2+}\wedge S^1.$$

 \begin{proposition} \label{comp}The composition $S^\sigma\stackrel{\delta}{\rightarrow} C _{2+}\wedge S^\sigma=C _{2+}\wedge S^1\stackrel{p}{\rightarrow} S^\sigma$ where the first map is the connecting map and the second map is induced by $C _{2+}\stackrel{p}{\rightarrow} S^0$ induces multiplication by 2 on Bredon motivic cohomology of a field i.e.
 $$H^{a+(p-1)\sigma,b+q\sigma} _{C _2}(k,\Z)\stackrel{p^*}{\rightarrow} H^{a+p-1,b+q}(k,\Z)\stackrel{\delta^*}{\rightarrow} H^{a+(p-1)\sigma,b+q\sigma} _{C _2}(k,\Z)$$
 is multiplication by 2 for any field $k$.
 \end{proposition}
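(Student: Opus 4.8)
The plan is to identify the stable self-map
$$c\colon S^\sigma\xrightarrow{\ \delta\ }C_{2+}\wedge S^\sigma\xrightarrow{\ p\ }S^\sigma$$
as the free-orbit (transfer) class in the equivariant $0$-stem, and then read off its action on $\MZ$-cohomology. First I would observe that the two maps $p^{*}$ and $\delta^{*}$ of the statement are induced by smashing the two displayed maps of $C_2$-spectra with $\MZ$, so they are maps of $\MZ$-modules and the index shift $\sigma\leftrightarrow 1$ in the statement is exactly the one produced by passing through $C_{2+}\wedge(-)$, where $S^\sigma\simeq S^1$ over the free orbit. Consequently the composite $\delta^{*}\circ p^{*}$ on $H^{a+(p-1)\sigma,b+q\sigma}_{C_2}(k,\Z)$ is precisely the cohomology operation induced by smashing the single class $[c]\in[S^\sigma,S^\sigma]_{\SH_{C_2}}\cong\pi^{C_2}_{0}(S^{0})$ with $\MZ$. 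It therefore suffices to determine the image of $[c]$ in $\pi^{C_2}_{0}(\MZ)$, after which the conclusion is automatically uniform in the bidegree.

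Next I would place $c$ inside the isotropy cofiber sequence $C_{2+}\xrightarrow{p}S^{0}\xrightarrow{a_\sigma}S^\sigma\xrightarrow{\delta_0}\Sigma C_{2+}$, writing $\delta=\Theta\circ\delta_0$, where $\Theta\colon C_{2+}\wedge S^{1}=\Sigma C_{2+}\xrightarrow{\ \sim\ }C_{2+}\wedge S^\sigma$ is the canonical equivalence trivialising $\sigma$ over the free orbit: it is the identity on the summand indexed by the unit of $C_2$ and the action of the generator on the other summand. The strategy is then to pin $[c]$ down by its two characteristic invariants, its geometric fixed points $\Phi^{C_2}(c)$ and its underlying motivic degree. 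Since $c$ factors through $C_{2+}\wedge S^\sigma$ and $\Phi^{C_2}(C_{2+}\wedge(-))\simeq\ast$, we get $\Phi^{C_2}(c)=0$. For the underlying invariant I would forget the $C_2$-action: $\delta_0$ becomes the Puppe connecting map of the fold $S^{0}\vee S^{0}\to S^{0}$, which on $H_1$ is the antidiagonal $1\mapsto(1,-1)$; the equivalence $\Theta$ negates the second summand because the generator acts on $S^\sigma$ by degree $-1$; and $p$ folds. Composing, the underlying degree is $1\cdot 1+(-1)\cdot(-1)=2$.

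These two invariants identify $[c]$ with the free-orbit class $[C_2]$ in the Burnside ring $A(C_2)\hookrightarrow\pi^{C_2}_{0}(S^{0})$ (its marks are $(0,2)$, which force $m=0$ and $n=1$ in $m\cdot 1+n\cdot[C_2]$). Under the unit $S^{0}\to\MZ$ this class maps into $\pi^{C_2}_{0}(\MZ)$, whose $C_2/C_2$-value is $\ul{\Z}(C_2/C_2)=\Z$, and there $[C_2]=\mathrm{tr}(\mathrm{res}(1))=\mathrm{tr}(1)$ maps to the integer $2$. Because this is a class of bidegree $(0,0)$, multiplication by it on $\MZ$-cohomology is multiplication by the integer $2$ in every bidegree; this is exactly the composite $\delta^{*}\circ p^{*}$, which proves the claim for every field $k$ with no restriction on the characteristic.

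The main obstacle is twofold and lives entirely in the middle step. First, one must keep careful track of the twist contributed by $\Theta$: without it the naive composite of the fold with the antidiagonal has degree $0$, and it is precisely the degree $-1$ self-map of $S^\sigma$ (the generator's action) that turns this into $2$. Second, in the motivic category $\pi^{C_2}_{0}(S^{0})$ is strictly larger than the classical Burnside ring, so one must argue that the image of $[c]$ in $\pi^{C_2}_{0}(\MZ)$ is computed through ranks, i.e. that the exotic Grothendieck--Witt contributions act on the integral constant-coefficient spectrum $\MZ$ only through their rank and hence are invisible here. Granting this reduction, together with the known identification of the degree-$0$ homotopy Mackey functor of $\MZ$ with the constant Mackey functor $\ul{\Z}$, the characteristic-free computation of the underlying degree ($2$) and the geometric fixed points ($0$) determines the operation to be multiplication by $2$.
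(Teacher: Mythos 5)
Your reduction of the statement to computing the stable class $[c]=[p\circ\delta]\in[S^\sigma,S^\sigma]_{\SH_{C_2}}$ and its image under the Hurewicz map $\pi_0^{C_2}(S^0)\to\pi_0^{C_2}(\MZ)$ is valid, and your computation of the underlying degree (including the sign bookkeeping for the untwisting equivalence $\Theta$, giving $1\cdot 1+(-1)\cdot(-1)=2$) is correct. The genuine gap is the step you yourself flag and then ``grant'': the identification of $[c]$ by its marks. The implication ``geometric fixed points $0$ and underlying degree $2$ force $[c]=[C_2]$'' is valid only in classical equivariant stable homotopy theory, where $\pi_0^{C_2}(S^0)=A(C_2)$ and the marks homomorphism $A(C_2)\to\Z\times\Z$ is injective. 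In the motivic category this fails: already nonequivariantly $\pi_{0,0}(S^0)=GW(k)$ by Morel's theorem, and the equivariant zero-stem is larger still, so two classes with the same underlying degree and geometric fixed points need not agree. Your proposed remedy --- that the ``exotic Grothendieck--Witt contributions'' act on $\MZ$ only through their rank --- is precisely the motivic content of the proposition, and writing ``granting this reduction'' leaves the central step unproved. The same objection applies to your appeal to ``the known identification'' of the degree-zero homotopy Mackey functor of $\MZ$ with $\ul{\Z}$: motivically $\MZ$ is not constructed from a Mackey functor, and the assertion that its transfer is multiplication by $2$ is again essentially what is to be shown.

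The gap can be closed, and in a way that makes the Burnside-ring detour and the geometric fixed point computation unnecessary. Since $\delta^*\circ p^*=c^*$ is an operation on $\MZ$-modules, it is multiplication by the Hurewicz image $h([c])\in H^{0,0}_{C_2}(k,\Z)$. The paper's weight-zero results give $H^{0,0}_{C_2}(k,\Z)=\Z$, and the restriction map $H^{0,0}_{C_2}(k,\Z)\to H^{0,0}(k,\Z)$ is additive and carries the unit class to the unit class, hence is an isomorphism between copies of $\Z$; so $h([c])$ is detected by the underlying class alone. Your own computation exhibits the underlying map as the fold applied to a map with both components homotopic to the identity, i.e. as $2\cdot\mathrm{id}\in[S^1,S^1]$, whose nonequivariant Hurewicz image in $H^{0,0}(k,\Z)=\Z$ is $2$ (no Grothendieck--Witt input is needed for an integer multiple of the identity). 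Hence $h([c])=2$ and $c^*$ is multiplication by $2$ in every bidegree. For comparison, the paper's proof is a one-line duality argument along different lines: desuspending $\delta$ by $S^\sigma$ gives a stable map $S^0\to C_{2+}$ which is Spanier--Whitehead dual to $p$, i.e. the transfer, so the composite is transfer followed by projection; the remaining fact that this composite is multiplication by $2$ with $\Z$-coefficients is visible at the cycle level, where the projection $\Z_{tr}(C_2)^{C_2}(k)\to\Z_{tr}(k)^{C_2}(k)$ is multiplication by $2$ --- equivalently, $Z_{top}(\sigma)(k)$ is the complex $\Z\stackrel{2}{\rightarrow}\Z$, as used in Propositions \ref{neg} and \ref{case2}.
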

 \begin{proof} The map $S^\sigma\stackrel{\delta}{\rightarrow} C _{2+}\wedge S^\sigma$ induces a stable map $S^0\rightarrow C _{2+}$ which is Spanier-Whitehead dual to the stable map $C _{2+}\stackrel{p}{\rightarrow} S^0$.
 \end{proof}
 \begin{corollary} The map on Bredon motivic cohomology induced by $C _{2+}\rightarrow S^0$ has a 2-torsion kernel.
 \end{corollary}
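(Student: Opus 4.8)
The plan is to read this off directly from \aref{comp}. Write $p^*$ for the homomorphism induced on Bredon motivic cohomology by $C_{2+}\to S^0$, so that in each bidegree it is the map
$$p^*\colon H^{a+(p-1)\sigma,\,b+q\sigma}_{C_2}(k,\Z)\longrightarrow H^{a+p-1,\,b+q}(k,\Z),$$
and let $\delta^*$ denote the connecting map of the long exact sequence \eqref{connec}. \aref{comp} computes the composite $\delta^*\circ p^*$ to be multiplication by $2$ on $H^{a+(p-1)\sigma,\,b+q\sigma}_{C_2}(k,\Z)$.

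Granting this, the statement is formal. First I would take an arbitrary class $x\in\ker p^*$ and apply $\delta^*$ to the equation $p^*(x)=0$, obtaining $\delta^*(p^*(x))=0$. Next I would invoke \aref{comp} to rewrite the left-hand side as $2x$, yielding $2x=0$. Hence every element of $\ker p^*$ is annihilated by $2$, i.e.\ the kernel is $2$-torsion, and since this holds in every bidegree the conclusion follows uniformly.

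There is no genuine obstacle at the level of the corollary itself: all of the content sits in \aref{comp}, whose proof identifies the relevant self-map of $S^\sigma$ via Spanier--Whitehead duality between $C_{2+}\to S^0$ and its dual $S^0\to C_{2+}$. The only point requiring care is bookkeeping of indices, namely checking that the source of the composite in \aref{comp} is precisely the Bredon motivic cohomology group on which $p^*$ acts; this is automatic since both maps are induced by the same morphism $p\colon C_{2+}\to S^0$.
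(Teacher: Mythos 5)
Your proof is correct and is exactly the paper's argument: the paper's own proof consists of the single observation that, by \aref{comp}, the composite $\delta^*\circ p^*$ is multiplication by $2$, so any $x\in\ker p^*$ satisfies $2x=\delta^*(p^*(x))=0$. Your write-up merely makes the same deduction explicit, including the (harmless) bookkeeping of indices.
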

  \begin{proof} According to the above composition we have that if $x\in Ker(p^*)$ then $2x=0$.
  \end{proof}
We also recall that the topological Bredon cohomology groups of a point with $\Z/2$-coefficients are (\cite{dug:kr}):

$$
H^{a+p\sigma}_{Br}(pt, \ul{\Z/2}) =
\begin{cases}
\Z/2 & 0\leq -a \leq p\\
\Z/2 & 1 < a \leq -p \\ 
0 & \text{otherwise}
\end{cases}.
$$
With integer coefficients  (\cite{dug:kr}), we have that if  $p>0$ (the positive cone) and  $p$ is even then
$$
H^{a+p\sigma}_{Br}(pt, \ul{\Z}) =
\begin{cases}
\Z & a=-p\\
\Z/2 & -p < a \leq 0, a=\text{even} \\ 
0 & \text{otherwise}
\end{cases},
$$
and if $p>0$ (the positive cone) and  $p$ is odd then 
$$
H^{a+p\sigma}_{Br}(pt, \ul{\Z}) =
\begin{cases}
\Z/2 & -p < a \leq 0, a=\text{even} \\ 
0 & \text{otherwise}
\end{cases},
$$
and if  $p<0$ (the negative cone) and $p$ is even then
$$
H^{a+p\sigma}_{Br}(pt, \ul{\Z}) =
\begin{cases}
\Z/2 & 1< a < -p, a=\text{odd}\\
\Z & a=-p\\ 
0 & \text{otherwise}
\end{cases},
$$
and if $p<0$ (negative cone) and $p$ is odd then
$$
H^{a+p\sigma}_{Br}(pt, \ul{\Z}) =
\begin{cases}
\Z/2 & 1< a \leq -p, a=\text{odd}\\ 
0 & \text{otherwise}
\end{cases}.
$$

  \section{Complexes of equivariant equidimensional cycles}
 
  We can define a presheaf of $C _2-$equivariant equidimensional cycles on $C _2Sm/k$ for any smooth $C _2$-scheme as in the non-equivariant case: $$z _{equi}(T,r)^{C _2}(S)=\text{the free abelian group generated by elementary}$$ $$\text{equivariant $r-$equidimensional cycles over $S$ in $S\times T$.}$$ An elementary equivariant $r-$equidimensional cycle is of the form  $\tilde{Z}= Z+g _1Z+...+g _nZ$, where $Z$ is an irreducible closed subvariety of $S\times T$ which is dominant and equidimensional of relative dimension $r$ over a connected component of $S$ and $g _i$ are a set of coset representatives of   $Stab(Z)=\{g\in G|gZ=Z\}$. We conclude that for a projective or proper smooth $C _2-$scheme $T$ we have $z _{equi}(T,0)^{C _2}=\Z _{tr}(T)^{C _2}$. This is because $z _{equi}(T,0)(S)$ is the free abelian group generated by closed irreducible  subvarieties $Z$ of $S\times T$ which are quasi-finite and dominant over a connected component of $S$
  and coincides for a proper or projective $C _2-$scheme $T$ with $\Z _{tr}(T)(S)$.
 
 We define Suslin-Friedlander equivariant motivic complexes for a $C _2-$representation $V=a+b\sigma$ to be $$\Z^{SF}(V)=C _*z _{equi}(\A(V),0)^{C _2}[-2a-2b\sigma].$$ This is a complex of sheaves in the equivariant Nisnevich topology on $C _2Sm/k$.  
 
 The next theorem states that Suslin-Friedlander equivariant motivic complexes compute Bredon motivic cohomology. 
 \begin{theorem} \label{id} Let $k$ be a perfect field and $V$ a $C _2$-representation. Then there is a quasi-isomorphism of complexes of sheaves in the equivariant Zariski topology on $C _2Sm/k$, $$\Z(V)\simeq \Z^{SF}(V).$$ In particular
 $H^{a+p\sigma,b+q\sigma}(X,\Z)=H^{a}_{C_2Nis}(X, \Z^{SF}(V)[p\sigma])=H^{a}_{C_2Zar}(X, \Z^{SF}(V)[p\sigma])$ for any $C _2$ -representation $V=b+q\sigma$. 
 \end{theorem}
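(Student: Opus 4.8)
The statement is the $C_2$-equivariant counterpart of the Friedlander--Voevodsky comparison between the motivic complex and the complex of equidimensional cycles, so the plan is to adapt that proof. Writing the weight representation as $V=b+q\sigma$, the comparison I aim for is a duality isomorphism of Friedlander--Voevodsky type, $C_* z_{equi}(\A(V),0)^{C_2}\simeq \Z(V)[2b+2q\sigma]$ in the derived category of equivariant Nisnevich sheaves, which after the Thom shift $[-2b-2q\sigma]$ is precisely $\Z^{SF}(V)\simeq\Z(V)$. I would first reduce the verification to stalks: by the description recalled in the Preliminaries, the points of the equivariant Nisnevich topology are henselizations $Spec(O^h_{A,Gx})$ of semilocal affine $C_2$-schemes with a single closed orbit, and a map of complexes of equivariant Nisnevich sheaves is a quasi-isomorphism precisely when it induces isomorphisms on all such stalks. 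Since both $C_* z_{equi}(\A(V),0)^{C_2}$ and $C_* \Z_{tr,C_2}(T^V)$ are homotopy invariant presheaves with equivariant transfers, and $T^V=\A(V)/(\A(V)\setminus\{0\})$, each stalk can be analyzed through the local geometry of its unique closed orbit.

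The closed orbit is either free or a single fixed point. If it is free, then $S$ is, Nisnevich-locally, induced from an ordinary henselian local scheme, and the equivalence $C_{2+}\wedge \chi\simeq C_{2+}\wedge\chi^e$ recalled above identifies both complexes with their underlying non-equivariant counterparts on $S^e$. There the required quasi-isomorphism is the classical identification $\Z^{SF}(b+q)\simeq \Z(b+q)$ of \cite{MVW} in the underlying weight $b+q$, so this case follows at once.

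The principal case is a point whose closed orbit is a fixed point. Here $\A(V)=\A^b\times \A(q\sigma)$, with trivial action on $\A^b$ and the sign action on $\A(q\sigma)$, whose only fixed point is the origin, so that the fixed locus of $S\times\A(V)$ lies in positive codimension coming entirely from the $\sigma$-directions. The heart of the matter is an equivariant moving lemma: one must homotope a $C_2$-invariant equidimensional cycle, through invariant equidimensional cycles, into a position where it is finite over $S$ and meets the fixed locus properly, so that its class is pinned down simultaneously by the non-equivariant duality along the trivial directions $\A^b$ and, after restriction to the free locus, by the forgetful equivalence along the sign directions. The two regimes are then glued using the equivariant cancellation theorem of \cite{HOV}, which matches the $\sigma$-part of the Thom twist. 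I expect this equivariant moving step to be the main obstacle: the linear homotopies used non-equivariantly need not respect the $C_2$-action and may drag cycles across the fixed locus, so keeping invariance and transversality to the fixed locus simultaneously is the delicate point, and it is here that perfectness of $k$ is used.

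Granting the quasi-isomorphism $\Z(V)\simeq \Z^{SF}(V)$, the cohomology formula follows formally. Taking equivariant Nisnevich hypercohomology and inserting the identification of Bredon motivic cohomology with the hypercohomology of the Bredon motivic complexes recalled in the Preliminaries replaces $\Z(V)$ by $\Z^{SF}(V)$, with the cohomology index $a+p\sigma$ contributing the cohomological degree $a$ and the shift $[p\sigma]$. Finally, since $\Z^{SF}(V)$ is a complex of homotopy invariant sheaves with equivariant transfers, the equivariant form of Voevodsky's comparison of topologies gives $H^{a}_{C_2Nis}=H^{a}_{C_2Zar}$, which yields both the last equality and the asserted quasi-isomorphism in the equivariant Zariski topology.
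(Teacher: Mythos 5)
Your reduction to the points of the equivariant Nisnevich topology (henselian semilocal affine $C_2$-schemes with a single closed orbit) is the right first move, and your treatment of the free-orbit case and of the formal deduction of the cohomology statement at the end are fine. But the heart of your argument --- the ``equivariant moving lemma'' that is supposed to handle a point whose closed orbit is a fixed point --- is never proved; you flag it yourself as the main obstacle, and as stated it is essentially the entire content of the theorem. So the proposal has a genuine gap exactly where the work lies, and the strategy you sketch for filling it (homotoping invariant cycles while keeping them invariant and transverse to the fixed locus, then gluing the two regimes via the cancellation theorem) is both vague and harder than what is actually needed.

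The key point you are missing is that no moving of cycles is required: over a henselian base the splitting is canonical. If $S$ is semilocal affine henselian with a single closed orbit and $Z\subset S\times \A(V)$ is an invariant cycle quasi-finite over $S$, pass to the quotient $Z/C_2\to S/C_2$, which is quasi-finite over the henselian \emph{local} scheme $S/C_2$; by Lemma 16.11 of \cite{MVW} it decomposes as $Z/C_2=Z'_1\sqcup Z'_0$ with $Z'_1$ finite surjective and $Z'_0$ missing the closed fibre and the zero section. Pulling back along the finite surjection $Z\to Z/C_2$ gives an equivariant decomposition $Z=Z_1\sqcup Z_0$, where $Z_1$ is finite and surjective over $S$ (hence comes from $\Z_{tr}(\P(V\oplus 1))^{C_2}/\Z_{tr}(\P(V))^{C_2}$) and $Z_0$ avoids $S\times\{0\}$. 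This identifies the two presheaves up to two error terms: the subpresheaf $F_V$ of cycles missing the zero section, and $\Z_{tr}(\P(V\oplus 1)\setminus\{0\})^{C_2}/\Z_{tr}(\P(V))^{C_2}$; both become chain contractible after applying $C_*$ (an explicit scaling homotopy $(x,r,t)\mapsto(x,rt)$ for the former, the equivariant $\A^1$-contraction of $\P(V\oplus 1)\setminus\{0\}$ onto $\P(V)$ for the latter), and the five lemma concludes. Note also that perfectness of $k$ enters not in any moving argument but in the sheaf-theoretic step (Proposition \ref{iso} of the paper): one needs the homotopy-invariance theory of presheaves with equivariant transfers from \cite{HOV} to convert the eNis-local isomorphism of cokernels into a quasi-isomorphism in the equivariant Zariski topology --- a step your outline also glosses over by treating the Zariski statement as automatic.
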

 \begin{proof} Let  $V=b+q\sigma$ a $C _2$-representation. As in the nonequivariant case (Theorem 16.8 \cite{MVW}) we let $$F _V(U)=\text{the free abelian group generated by the equivariant $0$-equidimensional (over $U$) closed}$$$$\text{ subvarieties  on $U\times \A(V)$ that doesn't touch $U\times 0$}.$$ We have that $F _V(U)\subset z _{equi}(A(V),0)^{C _2}(U)$. 
 
 We have a commutative diagram in the category of presheaves with equivariant transfers:
 \[
\xymatrixrowsep{0.3in}
\xymatrixcolsep {0.25in}
\xymatrix{
0\ar[r]&
\Z _{tr}(\P(V\oplus 1)\setminus\{0\})^{C _2}/\Z _{tr}(\P(V))^{C _2}\ar[r]\ar[d]& 
\Z _{tr}(\P(V\oplus 1))^{C _2}/\Z _{tr}(\P(V))^{C _2}\ar[r] \ar[d] &
coker _1 \ar[r]\ar[d]^{r}& 
0\\
0\ar[r] &
F _V \ar[r] & 
z _{equi}(\A(V),0)^{C _2}  \ar[r] &
coker _2 \ar[r] &
0.
}
\]
The vertical maps are injective by construction. The middle map is injective because we have an exact sequence $$0\rightarrow \Z _{tr}(\P(V))^{C _2}\rightarrow \Z _{tr}(\P(V\oplus 1))^{C _2}\rightarrow z _{equi}(\A(V),0)^{C _2}.$$ We see that $coker _1(U)$ is free abelian on equivariant correspondences $Z\subset U\times \P(V\oplus 1)$ which touch $U\times\{0\}$ and $coker _2(U)$ is free abelian on the equivariant closed $0$-equidimensional $W\subset U\times \A(V)$ which touch $U\times \{0\}$. But the middle map is injective on these generators so it implies that $r$ is injective. 

 We can prove that the map $r$ is surjective on semilocal affine Hensel schemes with a single closed orbit. Let $S$ be such a scheme and $Z\subset S\times \A(V)$ a $0-$equidimensional closed equivariant over $S$. This means that $Z$ is quasi-finite over $S$. This implies that $Z/C _2\rightarrow S/C _2$ is quasi-finite over the local affine Hensel scheme $S/C _2$. This implies that the quotient $Z/C _2=Z' _1\sqcup Z' _0$ is a disjoint union of a closed subscheme $Z' _1$ which is finite and surjective over $S/C _2$  and  a closed subscheme $Z' _0$ which doesn't contain any point over the closed point of the local Hensel scheme. Moreover $Z' _0$ doesn't intersect $S/{C _2}\times \{0\}$ (see Lemma 16.11 \cite{MVW}). Let $p:Z\rightarrow Z/C _2$ be the canonical quotient map which is finite and surjective and $Z _1=p^{-1}(Z' _1)$, $Z _0=p^{-1}(Z' _0)$. This implies that $Z=Z _0\sqcup Z _1$. We also have that $Z _0$ is a closed $C _2$-equivariant subscheme which doesn't intersect $S\times\{0\}$ and $Z _1$ is a $C _2-$equivariant closed subscheme which is finite and surjective over $S$.
 
    This implies that $Z _0$ is an $C _2-$equivariant cycle in $F _V(S)$. It is also clear that $Z _1$ comes from $\Z _{tr}(\P(V\oplus 1))^{C _2}/\Z _{tr}(\P(V))^{C _2}$ and $Z=Z _1$ in  $coker _2$. This implies $r$ is surjective on semilocal affine Hensel schemes with a single closed orbit. 
 
 We have that $C_*coker _1\simeq C _*coker _2$ is a quasi-isomorphism of sheaves in equivariant Zariski topology (see Proposition \ref{iso} below). 
 
 We notice that $\P(V\oplus1)\setminus\{0\}\hookrightarrow \P(V\oplus 1)$ is $C _2-$equivariantly $\A^1-$ homotopy equivalent to the inclusion $\P(V)\hookrightarrow \P(V\oplus 1)$ (see Page 5 \cite{HOV1})  so $$C _*(\Z _{tr}(\P(V\oplus 1)\setminus\{0\})^{C _2}/\Z _{tr}(\P(V))^{C _2})$$ is a chain contractible complex of presheaves. 
 
 To conclude the theorem, we notice that $C _*F _V$ is also a chain contractible complex of presheaves. Let $H$ be the following map $H _X: F _V(X)\rightarrow F _V(X\times \A^1)$ (with trivial action on $\A^1$) given by the equivariant pull-back of cycles $f: X\times \A(V)\times \A^1\rightarrow X\times \A(V)$, $f(x,r,t)=(x,rt)$ which is flat on $X\times \A(V)\setminus\{0\}$. Then $F _V(i _1)\circ H _X=id$ and $F _V(i _0)\circ H _X=0$ which implies that $C _*F _V$ is a chain contractible complex. Here $i _1$ is the inclusion $i _1:X\times \A(V)\rightarrow X\times \A(V)\times \A^1$ with $i _1(x,t)=(x,t,1)$ and similarly $i _0: X\times \A(V)\rightarrow X\times \A(V)\times \A^1$ with $i _0(x,t)=(x,t,0)$. 
 
 Notice that we only used equivariant $\mathbb{A}^1$- equivalence in order to prove that certain complexes of presheaves are chain contractible complexes.
 
Using that $C _*$ is an exact functor and using the five lemma we conclude that  $$C _*\Z _{tr}(\P(V\oplus 1))^{C _2}/\Z _{tr}(\P(V))^{C _2}\simeq C _*z _{equi}(\A(V),0)^{C _2}.$$
 \end{proof}
 \begin{proposition}\label{iso} Let $k$ be a perfect field. Let $F$ be a presheaf with equivariant transfers such that $F _{eNis}=0$. Then $C _*F _{eNis}\simeq 0$ and $C _*F _{eZar}\simeq 0$.
 \end{proposition}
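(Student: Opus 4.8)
The plan is to run the equivariant counterpart of the non‑equivariant argument behind the Suslin–Friedlander theorem (the theory of homotopy invariant presheaves with transfers in \cite{MVW}, around Theorem 16.8), reducing everything to the points of the equivariant Nisnevich site. Since the sheafification functors $a_{eNis}$ and $a_{eZar}$ are exact, the complexes $C_*F_{eNis}$ and $C_*F_{eZar}$ are acyclic if and only if the sheaves $a_{eNis}\underline{H}_n(C_*F)$ and $a_{eZar}\underline{H}_n(C_*F)$ vanish for all $n$. So the first step is to record that each homology presheaf $\underline{H}_n(C_*F)$ is a homotopy invariant presheaf with equivariant transfers: the standard contracting simplicial homotopy on $C_*$ is compatible with the composition of equivariant correspondences, exactly as in the non‑equivariant case. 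This places the whole problem inside the theory of such presheaves over the perfect field $k$.

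Next I would invoke the equivariant counterpart of Voevodsky's structure theory for homotopy invariant presheaves with transfers over a perfect field: for such a presheaf $H$ one expects $a_{eNis}H=a_{eZar}H$, and for a smooth connected $C_2$-scheme $X$ the restriction $a_{eNis}H(X)\to a_{eNis}H(\eta_X)$ to the (equivariant) generic point to be injective. Granting this, $a_{eNis}\underline{H}_n(C_*F)$, and hence $a_{eZar}\underline{H}_n(C_*F)$, vanish as soon as $\underline{H}_n(C_*F)$ vanishes on the generic points, i.e. as soon as the complex $C_*F(\spec K)=F(\Delta^\bullet_K)$ is acyclic for every field $K$ with $C_2$-action arising as such a generic point. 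This simultaneously settles the Zariski statement, since the two sheafifications agree on these presheaves.

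It then remains to prove this field‑level acyclicity from the sole hypothesis $F_{eNis}=0$, which — by the description of the equivariant Nisnevich points as single–closed–orbit Henselian semilocal $C_2$-schemes $\spec\mathcal{O}^h_{A,Gx}$ recalled in the Preliminaries (\cite{HOV}) — says precisely that $F$ vanishes on all such schemes. I would split according to the type of the closed orbit of the generic point. When the orbit is free the point is induced, $\eta=\Ind_e^{C_2}\spec K_0$, equivariant correspondences into an induced scheme reduce to ordinary correspondences, and so the acyclicity is exactly the non‑equivariant theorem of \cite{MVW} applied to the presheaf $T\mapsto F(\Ind_e^{C_2}T)$, whose Nisnevich sheafification vanishes because the $\Ind_e^{C_2}T$ are free‑orbit points of the equivariant Nisnevich site. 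The remaining, genuinely equivariant case is that of a fixed generic point $\spec K$ on which $C_2$ may act nontrivially through a degree two extension $K/K^{C_2}$; here one must push the hypothesis through the equivariant Gersten/coniveau resolution and an equivariant moving lemma over the perfect field. This fixed‑point case, with its nontrivial residual $C_2$-action on residue fields, is the main obstacle: it is the one place where one cannot simply reduce to \cite{MVW}, and it is where perfectness of $k$ and the equivariant refinement of Voevodsky's theory are indispensable.
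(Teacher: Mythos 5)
There is a genuine gap, and it sits exactly where you flag it: the fixed generic points. Your argument is a chain of reductions --- exactness of sheafification, homotopy invariance of the homology presheaves $H_i=H_i(C_*F)$, injectivity into generic points, untwisting of induced (free-orbit) points via $T\mapsto F(\Ind_e^{C_2}T)$ --- whose sole remaining content is the claim that $F(\Delta^\bullet_K)$ is acyclic when $\spec K$ is a generic point fixed by $C_2$ (with a possibly nontrivial action on $K$). At that point you write that one must ``push the hypothesis through the equivariant Gersten/coniveau resolution and an equivariant moving lemma''; but neither of these tools is constructed in your proposal nor available off the shelf in \cite{HOV}, and the case you need them for is precisely the one where the non-equivariant theory of \cite{MVW} gives no traction. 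So the proposal is not a proof: it is a correct identification of the hard step, followed by a promissory note. (A secondary soft spot: you also assume $a_{eNis}H=a_{eZar}H$ for homotopy invariant presheaves with equivariant transfers, which you would likewise need to establish.)

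The paper's proof is worth comparing because it never evaluates $C_*F$ at any point, so it never meets your obstacle. It runs an induction on $i$ in $D^-_{eNis}(k)$: if $(H_j)_{eNis}=0$ for $j<i$, the good truncation gives a map $C_*F_{eNis}\simeq \tau C_*F_{eNis}\rightarrow (H_i)_{eNis}[i]$ inducing an isomorphism on $H_i$. Since $H_i$ is homotopy invariant, the equivariant strict homotopy invariance theorem (Theorem 1.2 of \cite{HOV} --- this is where perfectness of $k$ enters) applies to $(H_i)_{eNis}$, and consequently $\Hom_{D^-_{eNis}(k)}(C_*F_{eNis},(H_i)_{eNis}[i])=\Hom_{D^-_{eNis}(k)}(F_{eNis},(H_i)_{eNis}[i])=0$ because $F_{eNis}=0$; hence the truncation map is zero and $(H_i)_{eNis}=0$. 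Only after this does the paper look at points, and in the opposite logical direction from you: generic points (disjoint unions of finitely generated field extensions of $k$ with $\Z/2$-action) are themselves equivariant Nisnevich points, so the already-proved eNis vanishing gives $H_i=0$ there for free, regardless of whether the orbit is free or fixed; the injectivity theorem (Theorem 7.13 of \cite{HOV}) then propagates this vanishing to all semilocal affine schemes with a single closed orbit, which yields the eZar statement. In short, the paper deduces vanishing at generic points from the eNis statement, whereas you tried to use vanishing at generic points as the input to the eNis statement --- and that inversion is what forces you into the fixed-point computation you cannot carry out.
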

 \begin{proof} We have that $C _*F _{eNis}\simeq 0$ iff $(H _i) _{eNis}=0$ where $H _i=H _i(C _*F _{eNis})=0$ for any i.  This is obviously true for $i\leq 0$, where for $i=0$ we used the condition $F _{eNis}=0$. We prove by induction that $(H _i) _{eNis}=0$ for any $i$. Suppose $(H _j) _{eNis}=0$ for any $j<i$. This means that the good truncation at level $i$, $\tau (C _*F _{eNis})$ is quasi-isomorphic to $C _*F _{eNis}$. We also have that the presheaf $H _i$ is homotopy invariant and then  $(H _i) _{eNis}$ is homotopy invariant (see Theorem 1.2. \cite{HOV}). This implies that  
 $$Hom _{D^- _{eNis}(k)}((C _*F) _{eNis}, (H _i)_{eNis}[i])=Hom _{D^- _{eNis}(k)}(F _{eNis}, (H _i) _{eNis}[i])=0,$$
 where $D^- _{eNis}(k)$ is the derived category of bounded above complexes of equivariant Nisnevich sheaves. This  implies that the map in $D^- _{eNis}(k)$, $$C _*F _{eNis}\simeq \tau C _*F _{eNis}\rightarrow (H _i) _{eNis}[i]$$ which induces an isomorphism on the $i^{th}$ homology groups is zero. This implies that $(H _i) _{eNis}=0$ and the induction is complete. 
 
  We have that $H _i(S)=0$ for any semilocal Hensel affine with a single closed orbit. In particular $H _i(\sqcup _j Spec(E _j))=0$ with a $\Z/2$-action on the disjoint union and $E _i$ finitely generated field extensions of $k$. For every semilocal affine with a single closed orbit $S$ over $k$ and  $S _0$ a $\Z/2$-invariant dense open subscheme we have that $F(S)\hookrightarrow F(S _0)$ for every homotopy invariant presheaf $F$ with equivariant transfers (Theorem 7.13, \cite{HOV}). Taking the intersection of all  affine $\Z/2-$equivariant open subsets of $S$ we obtain a disjoint union of  finitely generated field extensions of $k$. We conclude that $H _i(S)=0$ for every semi-local affine with a single closed orbit. Notice that any point on a quasi-projective $\Z/2$-variety has an affine invariant open neighborhood. This implies that $(H  _i) _{eZar}\simeq 0$ which implies $(C _*F) _{eZar}\simeq 0$.
 \end{proof} 
Part of the following proposition is in Proposition 3.15 \cite{HOV1}.
  \begin{proposition} \label{triv} Let $X$ be a smooth scheme with trivial $C _2$-action and $V=b+q\sigma$ a $C _2$-representation. Then
  $$\phi:H^{a+p\sigma,b+q\sigma}(X,\Z)\simeq H^{a-2b} _{Nis}(X,\Z(V)[p\sigma]).$$
    \end{proposition}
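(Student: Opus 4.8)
The plan is to reduce the equivariant hypercohomology on the left to an ordinary Nisnevich hypercohomology, the whole sign-direction being carried passively inside the coefficient complex. First I would replace the Bredon motivic complex by its Suslin--Friedlander model using \autoref{id}, so that $H^{a+p\sigma,b+q\sigma}(X,\Z)=H^{a}_{C_2Nis}(X,\Z^{SF}(V)[p\sigma])$ with $\Z^{SF}(V)=C_*z_{equi}(\A(V),0)^{C_2}[-2b-2q\sigma]$. The ordinary Tate shift $[-2b]$ is an honest cohomological shift of complexes, so I would pull it out of the coefficients into the degree; this is exactly what turns the exponent $a$ into $a-2b$ and leaves the normalised complex $\Z(V)[p\sigma]$ (carrying only the sign-direction shift $[-2q\sigma]$ together with $[p\sigma]$) in the coefficient slot. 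After this purely formal step the proposition becomes the single assertion that, over a smooth scheme with trivial action, equivariant Nisnevich hypercohomology agrees with ordinary Nisnevich hypercohomology of the same coefficient complex.

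For that comparison I would run the argument of \autoref{iso} essentially verbatim. The homology presheaves of $\Z^{SF}(V)[p\sigma]$ are homotopy invariant presheaves with equivariant transfers, so their equivariant Nisnevich hypercohomology over $X$ is detected on the points of the equivariant Nisnevich topology, which by \cite{HOV} are the Hensel semilocal affine $C_2$-schemes with a single closed orbit. The decisive simplification is that for the trivial action a single closed orbit is a single closed point: every equivariant Nisnevich point lying over $X$ is therefore an ordinary Hensel semilocal scheme equipped with the trivial action, on which an equivariant sheaf and its restriction to the small Nisnevich site have the same stalk. Since $X$ is quasi-projective it is covered by invariant affine opens, each carrying the trivial action, so this identification of stalks globalises to an isomorphism between the equivariant and the ordinary Nisnevich hypercohomology groups. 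Substituting $\Z(V)[p\sigma]$ and combining with the degree shift of the first paragraph gives the proposition, and specialising to $p=q=0$ recovers the equality $H^{a,b}_{C_2}(X,\Z)=H^{a,b}(X,\Z)$ of \cite{HOV1}, which is the part of the statement already known.

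I expect the main obstacle to be the comparison of the second paragraph, namely ruling out any contribution from the free-orbit covers that genuinely enlarge the equivariant Nisnevich topology over a trivial-action base. This is precisely where I would use that a homotopy invariant presheaf with equivariant transfers injects into its values on the generic field extensions of a semilocal Hensel $C_2$-scheme (Theorem 7.13 of \cite{HOV}), which lets me propagate the vanishing of the higher homology sheaves from the ordinary points to the full equivariant site through the same inductive truncation used in \autoref{iso}. Once this cohomological comparison is established, the remaining work --- tracking the shifts $[-2q\sigma]$ and $[p\sigma]$ through the isomorphism and matching them against the normalisation of $\Z(V)$ --- is routine bookkeeping.
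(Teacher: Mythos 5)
Your proposal has a genuine gap at its central step. The claim that the equivariant Nisnevich hypercohomology of $X$ ``is detected on the points'' of the topology, and that the identification of stalks over trivial-action points ``globalises to an isomorphism'' of hypercohomology groups, is not a proof: cohomology in positive degrees is not a stalkwise invariant, and two topologies can have matching points without any induced comparison of the cohomology of their sheaves. What is missing is precisely the mechanism the paper uses: one needs actual maps between the two theories, produced by morphisms of sites. The paper constructs $\phi$ from the forgetful functor $(-)^e\colon (C_2Sm/k)\rightarrow (Sm/k)$, and then constructs an inverse $\psi$ from the trivial-action inclusion $t\colon (Sm/k)\rightarrow (C_2Sm/k)$, using that the restriction functor $t^*$ is \emph{exact} and satisfies $t^*(\Z(V)[p\sigma])=\Z(V)[p\sigma]$ (Lemma 3.19 of \cite{HOV}, Proposition 3.15 of \cite{HOV1}), so that the Leray spectral sequence for $t$ degenerates. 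Your stalk observation (that an equivariant Nisnevich point over a trivial-action scheme is an ordinary Henselian point with trivial action) is in fact the correct \emph{local} input --- it is essentially why $t^*$ is exact --- but without the site-morphism/Leray framework it yields no map at all between $H^{a+p\sigma,b+q\sigma}(X,\Z)$ and $H^{a-2b}_{Nis}(X,\Z(V)[p\sigma])$, let alone an isomorphism. Invoking \autoref{iso} does not repair this: that proposition is a vanishing statement (if $F_{eNis}=0$ then $C_*F_{eNis}\simeq 0$), and its inductive truncation argument, which rests on homotopy invariance of sheafifications and a Hom-vanishing in the derived category, does not adapt to a comparison of two different topologies.

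Two secondary problems. First, your opening reduction via \autoref{id} imports the hypothesis that $k$ is perfect, which \autoref{triv} does not assume; it is also unnecessary, since the shift bookkeeping $[-2b]$, $[-2q\sigma]$, $[p\sigma]$ can be done directly on $\Z(V)$ without passing to the Suslin--Friedlander model (the paper's proof never touches $\Z^{SF}(V)$). Second, your stated ``main obstacle'' --- contributions from free-orbit covers --- is misplaced: a map like $X^e\times C_2\rightarrow X$ is a cover in the fixed-point Nisnevich topology but \emph{not} in the equivariant Nisnevich topology, because the stabilizer condition $S_y\simeq S_x$ fails; the genuine subtlety is rather that equivariant Nisnevich covers of a trivial-action $X$ may have nontrivially-acting total space, and this is exactly what the exactness of $t^*$ (checked through the cofinality of trivial-action neighborhoods) handles.
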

  \begin{proof} We also have the functor $(-)^e: (C _2Sm/k)\rightarrow (Sm/k)$ that forgets the $C _2$-action and sends an equivariant Nisnevich cover into a Nisnevich cover of the schemes without action giving a morphism of sites $(-)^e: (Sm/k) _{Nis}\rightarrow (C _2Sm/k) _{C _2Nis}$. This induces a map on cohomology $\phi: H^{a+p\sigma,b+q\sigma}(X,\Z)\rightarrow H^{a-2b} _{Nis}(X,\Z(V)[p\sigma])$. To construct the inverse of this map we look at the inclusion $t: (Sm/k)\rightarrow (C _2Sm/k)$ which gives a morphism of sites $t: (C _2Sm/k) _{C _2Nis}\rightarrow (Sm/k) _{Nis}$ with $t^*$ an exact functor and $t^*(\Z(V)[p\sigma])=\Z(V)[p\sigma]$ (see Proposition 3.15\cite{HOV1} and Lemma 3.19\cite{HOV}). From the Leray spectral sequence we have that $$\psi: H^{a-2b} _{Nis}(X,\Z(V)[p\sigma])\simeq H^{a+p\sigma,b+q\sigma}(X,\Z)=H^{a-2b} _{eNis}(X, \Z(V)[p\sigma])$$
  is an isomorphism; this is the inverse of $\phi$.
  \end{proof}

 According to Theorem 5.4 \cite{Nie} we have that $$C _*z _{equi}(\A(nk[C _2]),0)^{C _2}=\oplus _{j=n}^{2n-1}\Z/2(j)[2j]\oplus \Z(2n)[4n]$$
 is a decomposition in $DM^-(k)$, where $k$ is a field of characteristic zero. Here $k[C _2]=1\oplus \sigma$ is the regular representation of $C_2$. 
 
 We conclude that $C _*z _{equi}(\A(n\sigma),0)^{C _2}(n)[2n]=\oplus _{j=n}^{2n-1}\Z/2(j)[2j]\oplus \Z(2n)[4n]$ so $$C _*z _{equi}(\A(n\sigma),0)^{C _2}=\oplus _{j=0}^{n-1}\Z/2(j)[2j]\oplus \Z(n)[2n].$$
 
  \begin{proposition} \label{Nie}  Let $k$ be a field of characteristic zero  and $b,q\geq 0$. Then  $$H^{a+2q\sigma,b+q\sigma}_{C_2}(k,\Z)= \oplus _{j=0}^{q-1} H^{a+2j,j+b}(k, \Z/2) \oplus H^{a+2q, b+q}(k, \Z).$$
 \end{proposition}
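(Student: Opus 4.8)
The plan is to reduce the equivariant group to ordinary motivic cohomology, where the cited splitting of Nie becomes available. Since $k$ has characteristic zero it is perfect, so \aref{id} applies with cohomology index $a+2q\sigma$ and weight index $V=b+q\sigma$:
$$H^{a+2q\sigma,b+q\sigma}_{C_2}(k,\Z)=H^{a}_{C_2Zar}\bigl(k,\Z^{SF}(b+q\sigma)[2q\sigma]\bigr).$$
The decisive feature of the index is that the $\sigma$-part of the cohomology index is exactly twice the $\sigma$-part $q$ of the weight: unwinding $\Z^{SF}(b+q\sigma)=C_*z_{equi}(\A(b+q\sigma),0)^{C_2}[-2b-2q\sigma]$, the shift $[2q\sigma]$ cancels the $[-2q\sigma]$ and leaves
$$\Z^{SF}(b+q\sigma)[2q\sigma]\wkeq C_*z_{equi}(\A(b+q\sigma),0)^{C_2}[-2b].$$
For any other $\sigma$-component a residual twist $[(p-2q)\sigma]$ would survive and the right-hand side would no longer reduce to honest motivic complexes; this is why the statement is confined to the index $a+2q\sigma$.

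Next I would split off the trivial-action directions. Writing $\A(b+q\sigma)=\A(b)\times\A(q\sigma)$, with $C_2$ acting trivially on $\A(b)$, the external product of equidimensional cycles together with the Suslin--Friedlander identification $C_*z_{equi}(\A(b),0)\wkeq\Z(b)[2b]$ (\cite{MVW}) gives a quasi-isomorphism in $DM^-(k)$
$$C_*z_{equi}(\A(b+q\sigma),0)^{C_2}\wkeq \Z(b)[2b]\otimes C_*z_{equi}(\A(q\sigma),0)^{C_2}.$$
This is exactly the twisting argument already used in the excerpt to pass from $\A(nk[C_2])=\A(n)\times\A(n\sigma)$ to $\A(n\sigma)$, now carried out with the factor $\A(b)$ in place of $\A(n)$. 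Feeding in the consequence of Nie's Theorem 5.4 \cite{Nie} recorded above, namely $C_*z_{equi}(\A(q\sigma),0)^{C_2}=\bigoplus_{j=0}^{q-1}\Z/2(j)[2j]\oplus\Z(q)[2q]$, and using the multiplicativity $\Z(m)\otimes\Z(n)=\Z(m+n)$, $\Z(m)\otimes\Z/2(n)=\Z/2(m+n)$ of motivic complexes in $DM^-(k)$, I obtain
$$C_*z_{equi}(\A(b+q\sigma),0)^{C_2}\wkeq\bigoplus_{j=0}^{q-1}\Z/2(j+b)[2(j+b)]\oplus\Z(q+b)[2(q+b)].$$
Applying the shift $[-2b]$ from the first paragraph turns $[2(j+b)]$ into $[2j]$ and $[2(q+b)]$ into $[2q]$.

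It remains to compute $H^{a}_{C_2Zar}(k,-)$ on the resulting complex. Since $k$ carries the trivial action and every summand is pulled back from $Sm/k$, the equivariant Zariski hypercohomology over $k$ agrees with ordinary Nisnevich hypercohomology; this is the mechanism of \aref{triv}, via the exact pullback $t^{*}$ along the site morphism $t\colon(C_2Sm/k)_{C_2Nis}\to(Sm/k)_{Nis}$ and the Leray spectral sequence. Consequently
$$H^{a}_{C_2Zar}(k,\Z/2(j+b)[2j])=H^{a+2j,\,j+b}(k,\Z/2),\qquad H^{a}_{C_2Zar}(k,\Z(q+b)[2q])=H^{a+2q,\,b+q}(k,\Z),$$
and summing over $0\le j\le q-1$ gives the claimed decomposition. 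The main obstacle is the product decomposition of the second paragraph: one must make the external product of $C_2$-equivariant equidimensional cycles precise for the trivial factor $\A(b)$ and verify that Nie's splitting, which is stated in $DM^-(k)$, is compatible both with tensoring by $\Z(b)[2b]$ and with the descent to the field with trivial action. The hypotheses $b,q\ge 0$ enter precisely here, ensuring that $b+q\sigma$ is an actual representation and that all motivic complexes in sight are effective, so that both Nie's theorem and the trivial-action identification apply.
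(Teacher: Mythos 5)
Your proof is correct and follows essentially the same route as the paper's: apply \aref{id} to identify the group with hypercohomology of $C_*z_{equi}(\A(b+q\sigma),0)^{C_2}[-2b]$, split off the trivial factor via $C_*z_{equi}(\A(b+q\sigma),0)^{C_2}\simeq C_*z_{equi}(\A(q\sigma),0)^{C_2}(b)[2b]$, insert Nie's decomposition, and evaluate termwise over the trivial-action point. The only difference is that you spell out the trivial-action descent step (the mechanism of \aref{triv}) and the restriction to the index $a+2q\sigma$, which the paper leaves implicit.
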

 \begin{proof}
 Using Theorem \ref{id}  we have that
   $$H^{a+2q\sigma,b+q\sigma}_{C_2}(k,\Z)= H^{a-2b}(k, C _*z _{equi}(\A(V),0)^{C_2})=$$
   $$=H^{a-2b} (k, C _*z _{equi}(\A(q\sigma),0)^{C _2}(b)[2b])= H^{a-2b} (k, \oplus _{j=0}^{q-1}\Z/2(j+b)[2j+2b]\oplus \Z(q+b)[2q+2b])=$$
   $$= \oplus _{j=0}^{q-1} H^{a-2b}(k, \Z/2(j+b)[2j+2b]) \oplus H^{a-2b} (k, \Z(q+b)[2q+2b])= $$
    $$= \oplus _{j=0}^{q-1} H^{a+2j,j+b}(k, \Z/2) \oplus H^{a+2q, b+q} (k, \Z).$$
    
    We used that $$C _*z _{equi}(\A(b+q\sigma),0)^{C _2}\simeq C _*z _{equi}(\A(q\sigma),0)^{C _2}\otimes^{tr} C _*z _{equi}(\A^b,0)=C _*z _{equi}(\A(q\sigma),0)^{C _2}(b)[2b]$$ for any $b,q\geq 0$. 
   \end{proof}
 
 With $\Z/2-$coefficients (the same for $\Z/l$) we have (Theorem 5.7 \cite {Nie}):
 $$C _*z(\mathbb{A}(n\sigma))^{\Z/2}\otimes\Z/2= \oplus _{j=0}^{n-1}(\Z/2(j)[2j]\oplus\Z/2(j)[2j+1])\oplus \Z/2(n)[2n].$$
This implies the following proposition:
\begin{proposition} \label {fin} For any field $k$ of characteristic zero and any  $b,q\geq 0$ we have $$H^{a+2q\sigma,b+q\sigma} (k,\Z/2)=\oplus _{j=0}^{q-1}(H^{a+2j,j+b} (k,\Z/2)\oplus H^{a+2j+1,j+b}(k,\Z/2))\oplus$$ 
$$\oplus H^{a+2q,q+b} (k,\Z/2)\simeq \oplus _{j=0}^{2q}H^{a+j} _{et}(k,\mu _2^{\otimes{b+j}}).$$
The statement is also valid with $\Z/l$-coefficients.
\end{proposition}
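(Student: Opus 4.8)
The plan is to run the argument of Proposition \ref{Nie} essentially verbatim, replacing Nie's integral splitting by its mod-$2$ (respectively mod-$l$) form and then translating the resulting motivic groups into Galois cohomology via Beilinson--Lichtenbaum.

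First I would apply Theorem \ref{id} to rewrite the left-hand side as the hypercohomology $H^{a-2b}\bigl(k,\,C_*z_{equi}(\A(V),0)^{C_2}\otimes\Z/2\bigr)$ of the Suslin--Friedlander complex, where $V=b+q\sigma$; the choice of cohomology index $a+2q\sigma$ is exactly what makes the $\sigma$-shift $[2q\sigma]$ cancel against the shift built into $\Z^{SF}(V)$, leaving the plain shift $[-2b]$. Using the multiplicativity $C_*z_{equi}(\A(b+q\sigma),0)^{C_2}\simeq C_*z_{equi}(\A(q\sigma),0)^{C_2}(b)[2b]$ from the proof of Proposition \ref{Nie}, together with the mod-$2$ decomposition of Nie (Theorem 5.7 \cite{Nie}) recalled just above, the complex acquires the splitting $\oplus_{j=0}^{q-1}\bigl(\Z/2(j+b)[2j+2b]\oplus\Z/2(j+b)[2j+2b+1]\bigr)\oplus\Z/2(q+b)[2q+2b]$. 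Taking $H^{a-2b}(k,-)$ of each Tate-twisted summand is then pure degree/weight bookkeeping and produces the $2q+1$ motivic cohomology groups of the first displayed equality, the new feature relative to Proposition \ref{Nie} being that each shift $[2j+2b+1]$ contributes the odd-degree term $H^{a+2j+1,j+b}(k,\Z/2)$.

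For the second isomorphism I would invoke the Beilinson--Lichtenbaum (norm residue) theorem \cite{MVW}, which realizes the mod-$2$ motivic cohomology $H^{n,m}(k,\Z/2)$ as the étale cohomology $H^n_{et}(k,\mu_2^{\otimes m})$ in the appropriate range of indices. Applying this to each of the $2q+1$ summands and relabelling the cohomological degrees $a,a+1,\dots,a+2q$ by a single index $j=0,\dots,2q$ gives the asserted étale sum. Here I would record that over a field of characteristic $0$ the module $\mu_2^{\otimes m}$ is isomorphic to the constant module $\Z/2$ for every $m$ (since $-1\in k$), so the specific Tate twist attached to each summand is immaterial and the twists may be displayed uniformly as $\mu_2^{\otimes(b+j)}$. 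The $\Z/l$ statement is obtained by the same two steps, feeding in the mod-$l$ form of Nie's splitting and the mod-$l$ Beilinson--Lichtenbaum isomorphism; the only extra care is that for odd $l$ the Tate twists are no longer inert and must be read off from the weights of the Tate summands.

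The first step is formal once Theorem \ref{id} and Nie's decomposition are granted, so the substantive point is the second: one must verify that each summand $H^{a+2j,b+j}(k,\Z/2)$ and $H^{a+2j+1,b+j}(k,\Z/2)$ falls within the range in which Beilinson--Lichtenbaum is an isomorphism (and in particular handle the boundary degree $n=m+1$, where the comparison map is only injective), and then confirm that the reindexing of degrees and twists reproduces the right-hand side precisely. This matching of bidegrees, rather than any homotopy-theoretic input, is where I expect the real work to lie.
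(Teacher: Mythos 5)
Your proposal is correct and follows essentially the same route as the paper, whose proof is just a compressed version of your argument: the first equality comes from Theorem \ref{id} together with Nie's mod-$2$ (resp.\ mod-$l$) decomposition recalled immediately before the proposition (exactly your steps one and two), and the second isomorphism from Voevodsky's Milnor conjecture theorem (resp.\ Rost--Voevodsky's Bloch--Kato theorem), which is all the paper says. The range issue you flag at the end is genuine but is passed over silently by the paper as well: the \'etale identification should be read term-by-term and is only valid when the cohomological degree of a summand does not exceed its weight, since outside that range the motivic group vanishes while the corresponding \'etale group need not (e.g.\ over a formally real field), so your proposal is, if anything, more careful than the paper's one-line proof.
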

\begin{proof} The first statement follows from V.Voevodsky \cite{Voev:miln} and the statement with $\Z/l$ coefficients follows from M.Rost and V.Voevodsky's Bloch-Kato theorem \cite{Voev:BK}, \cite{nr}.
\end{proof}
In the case of $n=1$ we have  $C _*z _{equi}(\mathbb{A}(\sigma))^{\Z/2}\otimes\Z/2=\Z/2\oplus\Z/2[1]\oplus \Z/2(1)[2]$ and $[C _*z _{equi}(\mathbb{A}(\sigma))^{\Z/2}\otimes\Z/2](k)=\Z/2\oplus\Z/2[1]\oplus k^*/k^{*2}[1]\oplus \Z/2[2].$ We also have that $C _*z _{equi}(\mathbb{A}(\sigma))^{\Z/2}\simeq \Z/2\oplus O^*[1]$. These remarks are used in Proposition \ref{2q}. 

Remark that Proposition \ref{fin} and Proposition \ref{Nie} are valid for any smooth scheme with a trivial action.

 \section{Bredon motivic cohomology of a field in weight 0}
 In this section we will compute Bredon motivic cohomology in weight less than or equal to zero of an arbitrary  field $k$.

We have the following vanishing theorem for Bredon motivic cohomology groups:
\begin{proposition} \label{van} For any smooth $C _2-$variety $X$ and any $b+q<0, b<0$  we have that  $$H^{a+p\sigma,b+q\sigma} _{C _2}(X,\Z)=0.$$ 
  \end{proposition}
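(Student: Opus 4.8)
The plan is to run the isotropy separation for the motivic family $\EG\Z/2_+\to S^0\to\EGt\Z/2$, smashed with $S^{a+p\sigma,b+q\sigma}\wedge\MZ$ and mapped out of $\Sigma^\infty X_+$. Since $[\Sigma^\infty X_+,-]$ is a cohomological functor, this produces a long exact sequence in which $H^{a+p\sigma,b+q\sigma}_{C_2}(X,\Z)$ is squeezed between a free/Borel term $[\Sigma^\infty X_+,\EG\Z/2_+\wedge S^{a+p\sigma,b+q\sigma}\wedge\MZ]$ and a geometric term $[\Sigma^\infty X_+,\EGt\Z/2\wedge S^{a+p\sigma,b+q\sigma}\wedge\MZ]$. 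The point of this splitting is that the two hypotheses play complementary roles: the condition $b+q<0$ (negativity of the underlying weight $\dim V$) will kill the Borel term, while $b<0$ (negativity of the fixed weight $\dim V^{C_2}$) will kill the geometric term, so the middle group must vanish in every degree.

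First I would treat the Borel term. Writing $\EG\Z/2=\colim_n(\A(n\sigma)\setminus\{0\})$, the object $\EG\Z/2_+\wedge X_+$ is a filtered colimit of the free $C_2$-schemes $(\A(n\sigma)\setminus\{0\})\times X$, so since $\Sigma^\infty X_+$ is compact the Borel term is $\colim_n$ of the Bredon motivic cohomology of these free schemes. For a free action the $\sigma$-twists trivialize and the cohomology reduces to ordinary motivic cohomology of the quotients $((\A(n\sigma)\setminus\{0\})\times X)/C_2$ in the collapsed weight $b+q$ (the free-action identification recalled in the preliminaries, of which $H^{a+p\sigma,b+q\sigma}_{C_2}(C_{2+}\wedge\chi)=H^{a+p,b+q}(\chi^e)$ is the induced case). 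Each quotient is smooth, so its motivic cohomology vanishes in the negative weight $b+q<0$, and the colimit of zero groups is zero.

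The hard part will be the geometric term, and this is where Theorem \ref{id} enters. Since $\EGt\Z/2=\colim_n S^{2n\sigma,n\sigma}$, this term is $\colim_n H^{a+(p+2n)\sigma,\,b+(q+n)\sigma}_{C_2}(X)$, a colimit over ever larger $\sigma$-weights with the \emph{fixed} weight pinned at $b<0$. For each $n$ the weight $b+(q+n)\sigma$ is still virtual (because $b<0$), so I would apply the cancellation isomorphism with $W=-b$ a trivial representation, turning the weight into the genuine representation $(q+n)\sigma$ and making Theorem \ref{id} applicable; the Suslin--Friedlander complex $\Z^{SF}((q+n)\sigma)$ then splits (Proposition \ref{Nie}, via Nie's decomposition) into ordinary Tate motivic complexes. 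Tracking the two parts of the $RO(C_2)$-grading carefully---remembering that $T^{\sigma}\simeq S^{2\sigma,\sigma}$ carries weight index $\sigma$, not weight $1$---the surviving contributions are motivic cohomology groups governed by the fixed part $b$, which vanish since $b<0$. The genuine obstacle is precisely this bookkeeping of the $\sigma$-graded shifts against the Thom suspension by $T^{W}$: one must verify that inverting the Euler class (the passage to the colimit defining $\EGt\Z/2$) does not resurrect any nonnegative-weight summand, so that $b<0$ alone forces the geometric term to zero. An alternative way to organize this last step would be to use the periodicities $(\sigma-1,0)$ and $(\sigma-1,\sigma-1)$ of the cohomology of $\EGt\Z/2$ recalled above to reduce the weight $b+q\sigma$ to the fixed weight $b$ before invoking negative-weight vanishing.
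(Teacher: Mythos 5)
Your skeleton --- the isotropy sequence, with $b+q<0$ responsible for the free part and $b<0$ for the geometric part --- is exactly the paper's, but your treatment of the geometric term, which is the heart of the proof, does not work. First, the tools you invoke are unavailable at the stated generality: Theorem \ref{id} needs $k$ perfect, Proposition \ref{Nie} needs $\ch(k)=0$, and Nie's splitting is a statement in $DM^{-}(k)$ which (as the paper notes after Proposition \ref{fin}) computes Bredon motivic cohomology only for smooth schemes with \emph{trivial} action, whereas Proposition \ref{van} is asserted for an arbitrary field and an arbitrary smooth $C_2$-variety $X$, and is used in that generality in Section 4 (e.g.\ Theorem \ref{compZ}). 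Second, and decisively, the termwise vanishing you claim is false even where the tools do apply. For $X=\spec(k)$, $\ch(k)=0$ (and say $p=2q$, so that Proposition \ref{Nie} applies directly), the $n$-th stage of your colimit has weight $b+(q+n)\sigma$, and the Suslin--Friedlander/Nie splitting does \emph{not} keep everything in weight $b$: it spreads the $\sigma$-weight into ordinary weights, giving $\oplus_{j=0}^{q+n-1}H^{a+2j,\,j+b}(k,\Z/2)\oplus H^{a+2(q+n),\,b+q+n}(k,\Z)$. As soon as $j\ge -b$ these summands sit in non-negative weight and are typically nonzero (the summand with $j=-b$ is $H^{a-2b,0}(k,\Z/2)=\Z/2$ when $a=2b$). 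So $b<0$ does not kill the stages; the vanishing of the geometric term lives entirely in the Euler-class transition maps of the colimit, which you never analyze. The issue you flag as ``bookkeeping'' is therefore not bookkeeping --- it is the whole content, and a termwise argument cannot produce it. The paper never touches $\Z^{SF}$ or Nie here: it reduces both outer terms by the periodicities of $\EGt C_2$ and $\EG C_2$ quoted from \cite{HOV1} to integer bidegrees with negative integer weight, and kills them using the long exact sequence \ref{connec} for $C_{2+}\to S^0\to S^{\sigma}$ inductively together with negative-weight vanishing of ordinary motivic cohomology --- an argument valid over any field. Your one-sentence fallback (periodicities of $\EGt\Z/2$) is in fact the paper's route, but it is not a finishing move by itself: after the reduction one must still prove the reduced groups vanish, which is what the paper's displayed inductive sequences do.

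The Borel term also needs repair, though there the gap is fixable. You justify passage to a colimit by compactness of $\Sigma^{\infty}X_+$, yet describe the stages as the Bredon motivic cohomology of the free schemes $(\A(n\sigma)\setminus\{0\})\times X$; these are two different readings. Compactness applies to maps \emph{into} the filtered colimit $\EG\Z/2_+\wedge S^{a+p\sigma,b+q\sigma}\wedge\MZ$, and then the $n$-th stage is $[\Sigma^{\infty}X_+,(\A(n\sigma)\setminus\{0\})_+\wedge S^{a+p\sigma,b+q\sigma}\wedge\MZ]$, which equals the cohomology of $X\times(\A(n\sigma)\setminus\{0\})$ only after dualizing $(\A(n\sigma)\setminus\{0\})_+$ --- and the dual is a twist of it by $S^{1-2n\sigma,-n\sigma}$ (dualize the cofiber sequence $(\A(n\sigma)\setminus\{0\})_+\to S^0\to T^{n\sigma}$), so the stages become $H^{a-1+(p+2n)\sigma,\,b+(q+n)\sigma}_{C_2}(X\times(\A(n\sigma)\setminus\{0\}))$, whose collapsed weight $b+q+n$ is eventually non-negative: termwise vanishing fails again in this reading. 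What works is the dual reading: the cohomology of the colimit $\EG\Z/2_+\wedge X_+$ sits in a Milnor $\lim$/$\lim^1$ sequence over $H^{a+p\sigma,b+q\sigma}_{C_2}((\A(n\sigma)\setminus\{0\})\times X)$, each of which vanishes (free action, collapsed weight $b+q<0$), so both $\lim$ and $\lim^1$ vanish; equivalently, as in the paper, apply the $(2\sigma-2,\sigma-1)$ periodicity of $\EG C_2$ directly. Note finally that ``the $\sigma$-twists trivialize'' hides a step: the free-action comparison recalled in the paper is for integer bidegrees only, and the leftover simplicial $\sigma$'s in the cohomological index must be removed by the inductive use of \ref{connec} --- precisely the two long exact sequences displayed in the paper's proof.
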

  \begin{proof} We use the motivic isotropy cofiber sequence $\EG C _{2+}\rightarrow S^0\rightarrow \EGt C _2$ and the periodicity of Bredon motivic cohomology of $\EGt C _{2}$ and $\EG C _2$ to obtain $$H^{a,b}(\EGt C _2\times X)=0\rightarrow H^{a+p\sigma,b+q\sigma} _{C _2}(X)\rightarrow H^{a+2q+(p-2q)\sigma, b+q} _{C _2}(\EG C _2\times X)=0.$$ This  implies the vanishing of the statement. The last equality follows from the fact that $H^{a+p\sigma, b}(\EG C _2\times X)=0$ for any $a, p$ and $b<0$. This is the result of using inductively the l.e.s.
  $$H^{a,b}(\EG C _2\times X)=0\rightarrow H^{a,b} _{C _2}(\EG C _2\times X)=0\rightarrow H^{a+\sigma,b} _{C _2}(\EG C _2\times X)\rightarrow H^{a+1,b} (\EG C _2\times X)=0$$
and the l.e.s.
 $$H^{a-1,b}(\EG C _2\times X)=0\rightarrow H^{a-\sigma,b} _{C _2}(\EG C _2\times X)\rightarrow H^{a,b} _{C _2}(\EG C _2\times X)=0\rightarrow H^{a,b}(\EG C _2\times X)=0.$$

 \end{proof}

 \begin{proposition} Suppose $b+q<0$. Then $$H^{a+p\sigma,b+q\sigma} _{C _2}(k,A)=H^{a+2q\sigma,b+q\sigma} _{C _2}(k,A)$$ for any abelian group $A$. When $A=\Z/2$ and  $a>b\geq 0$ or $a\leq -2$ we have that 
 $$H^{a+p\sigma,b+q\sigma} _{C _2}(k,\Z/2)=H^{a+2q\sigma,b+q\sigma} _{C _2}(k,\Z/2)=(H^{*} _{et}(k,\mu^* _2)[\tau,u,v]/(u^2=v\tau))^{(a-1,b)}.$$
 \end{proposition}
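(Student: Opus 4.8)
The plan is to separate the statement into the degree-shift isomorphism, which holds for every coefficient group, and the ring identification, which is special to $A=\Z/2$. For the first equality I would run the long exact sequence of \eqref{connec} with $A$-coefficients, coming from the basic cofiber sequence $C_{2+}\wedge\MA\to\MA\to S^\sigma\wedge\MA$. In this sequence the two motivic groups flanking $H^{a+p\sigma,b+q\sigma}_{C_2}(k,A)$ are $H^{a+p,b+q}_{\mcal M}(k,A)$ and $H^{a+p+1,b+q}_{\mcal M}(k,A)$; since the motivic complex $A(m)$ vanishes for $m<0$ and here $m=b+q<0$, both are zero. Hence the map $i_*\colon H^{a+p\sigma,b+q\sigma}_{C_2}(k,A)\to H^{a+(p+1)\sigma,b+q\sigma}_{C_2}(k,A)$ has trivial kernel (the image of $p_*$ out of $0$) and trivial cokernel (it injects into $0$), so it is an isomorphism. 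Iterating $i_*$ in either direction identifies the groups as $p$ varies, in particular with the normal form $p=2q$, which gives the first displayed equality.

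For $A=\Z/2$ it then remains to evaluate $H^{a+2q\sigma,b+q\sigma}_{C_2}(k,\Z/2)$, and here I would feed the degrees into the motivic isotropy sequence $\EG C_{2+}\to S^0\to\EGt C_2$, the same device used in \aref{van}. Its long exact sequence relates the group to $\widetilde H^{a+2q\sigma,b+q\sigma}(\EGt C_2,\Z/2)$ and to the reduced cohomology of $\EG C_{2+}$. For the latter, the $(2\sigma-2,\sigma-1)$-periodicity of the Bredon motivic cohomology of $\EG C_2$ \cite{HOV1} turns the two neighbouring terms into the integral-weight groups $H^{a+2q,b+q}_{C_2}(\EG C_2,\Z/2)$; since $C_2$ acts freely on $\EG C_2$ these equal $H^{a+2q,b+q}(\BG C_2,\Z/2)$, the mod $2$ motivic cohomology of the geometric classifying space, which vanishes because its weight $b+q$ is negative. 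With both adjacent $\EG C_2$-terms zero, the map $S^0\to\EGt C_2$ induces an isomorphism $\widetilde H^{a+2q\sigma,b+q\sigma}(\EGt C_2,\Z/2)\xrightarrow{\ \cong\ }H^{a+2q\sigma,b+q\sigma}_{C_2}(k,\Z/2)$.

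I would then match $\widetilde H^{a+2q\sigma,b+q\sigma}(\EGt C_2,\Z/2)$ with $(H^*_{et}(k,\mu^*_2)[\tau,u,v]/(u^2=v\tau))^{(a-1,b)}$. The coefficient part is already available: $H^*_{et}(k,\mu^*_2)[\tau]$ is the mod $2$ motivic cohomology $H^{*,*}(k,\Z/2)$ via Beilinson--Lichtenbaum (as used in \aref{fin}), with $\tau$ in bidegree $(0,1)$, while the Euler-type classes $u,v$ and the relation $u^2=v\tau$ are produced by the $B\mu_2$-term feeding the isotropy sequence together with Nie's decomposition \aref{Nie}, \aref{fin}. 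Reading off the cohomology of $\EGt C_2$ in the grading dictated by its periodicities $(\sigma-1,0)$ and $(\sigma-1,\sigma-1)$ \cite{HOV1} accounts for the index shift to $(a-1,b)$, and the two ranges $a>b\ge 0$ and $a\le -2$ should be exactly the regions in which the connecting homomorphisms of the isotropy sequence degenerate, so that the answer is the bare graded piece of the ring without correction terms.

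The main obstacle is this last identification precisely where $b+q<0$. That region is the negative cone and lies outside the range $q\ge 0$ to which \aref{fin} and \aref{Nie} apply, so Nie's decomposition cannot be invoked directly; one must instead control the cohomology of $\EGt C_2$ (equivalently the inverse of the invertible complex $Z_{top}(\sigma)$) and prove that the boundary maps vanish exactly on $a>b\ge 0$ and $a\le -2$. Carrying out this range-by-range analysis, and separating off the complementary region near the diagonal $a\approx b$ and for small $|a|$ where extra terms genuinely appear, is where the real work lies.
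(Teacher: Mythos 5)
Your first displayed equality is handled exactly as in the paper: run the long exact sequence \eqref{connec} and use that motivic cohomology of $k$ vanishes in the negative weight $b+q$, so $i_*$ is an isomorphism and $p$ can be normalized to $2q$; that part of your proposal stands. The gap is in the second, substantive equality, and you concede it yourself. Passing to the isotropy sequence and showing that the two Borel terms $H^{*}_{C_2}(\EG C_2,\Z/2)$ vanish (which is correct, by the same periodicity-plus-negative-weight argument as in \aref{van}) only yields an isomorphism $\rH^{a+2q\sigma,b+q\sigma}_{C_2}(\EGt C_2,\Z/2)\iso H^{a+2q\sigma,b+q\sigma}_{C_2}(k,\Z/2)$; this merely renames the unknown group and gives no independent hold on either side. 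The tool you then invoke to produce $u$, $v$ and the relation $u^2=v\tau$ --- Nie's decomposition (\aref{Nie}, \aref{fin}) --- is, as you yourself note, only available for weights $b+q\sigma$ with $b,q\geq 0$, so it cannot be applied in the region $b+q<0$ where the proposition lives. What you defer as ``the real work'', namely controlling the cohomology of $\EGt C_2$ in negative weights, is precisely the assertion to be proved, so the outline is circular at its core step.

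The missing idea (and the paper's actual proof) is to avoid the colimit $\EGt C_2$ altogether and untwist at \emph{finite} level, which is possible precisely because $q<0$: by the definition together with equivariant cancellation, $H^{a+2q\sigma,b+q\sigma}_{C_2}(k,A)=\rH^{a}_{C_2}(T^{-q\sigma},A(b))$, where $-q\sigma$ is an actual representation. The cofiber sequence $\A(-q\sigma)\setminus\{0\}_+\to S^0\to T^{-q\sigma}$ then places this group in a long exact sequence between motivic cohomology groups of the point and $H^{*}_{C_2}(\A(-q\sigma)\setminus\{0\},A(b))$, and since the $C_2$-action on $\A(-q\sigma)\setminus\{0\}$ is free, the latter is the ordinary motivic cohomology of the quotient scheme $(\A(-q\sigma)\setminus\{0\})/C_2$. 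That cohomology is known: by Theorem 6.10 of \cite{Voc} it is the free $H^{*,*}(k,\Z/2)=H^{*}_{et}(k,\mu^*_2)[\tau]$-module on the classes $v^i,uv^i$ with $u$ of bidegree $(1,1)$, $v$ of bidegree $(2,1)$, subject to $u^2=v\tau$ --- this, not Nie's theorem, is where $u$ and $v$ come from. In the stated ranges $a>b\geq 0$ or $a\leq -2$ the flanking motivic cohomology groups of the point degenerate the sequence, and one reads off the graded piece $(H^{*}_{et}(k,\mu^*_2)[\tau,u,v]/(u^2=v\tau))^{(a-1,b)}$, the shift to $a-1$ being the connecting-map shift in this finite-level cofiber sequence. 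Without this finite-level input your proposal cannot be completed as written.
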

\begin{proof}  We restrict to the case $q<0, b\geq 0$ because if $q<0$ and $b<0$ we can use Proposition \ref{van} to conclude that both groups are zero. Using the cofiber sequence $C _{2+}\rightarrow S^0\rightarrow S^\sigma$ and $b+q< 0$ we conclude that $H^{a+p\sigma,b+q\sigma} _{C _2}(k,A)=H^{a+2q\sigma,b+q\sigma} _{C _2}(k,A)$ for any $p$ because motivic cohomology is zero in negative weight.

From the definition, it follows that  $H^{a+2q\sigma,b+q\sigma} _{C _2}(k,A)=\rH^{a} _{C _2}(T^{-q\sigma},A(b))$. We have a cofiber sequence $\mathbb{A}(-q\sigma)\setminus\{0\}_+\rightarrow S^0\rightarrow T^{-q\sigma}$ which implies that 
$$H^{a,b}(k,A)\rightarrow H^{a}_{C _2}(\mathbb{A}(-q\sigma)\setminus\{0\},A(b))\rightarrow \rH^{a+1} _{C _2}(T^{-q\sigma},A(b))\rightarrow H^{a+1,b}(k,A)$$ with $H^{a}_{C _2}(\mathbb{A}(-q\sigma)\setminus\{0\},A(b))=H^{a,b}(\mathbb{A}(-q\sigma)\setminus\{0\}/C _2,A)$ because $\mathbb{A}(-q\sigma)\setminus\{0\}$ has a free $C _2$-action. 

We know that $H^{*,*}(\mathbb{A}(-q\sigma)\setminus\{0\}/C _2,\Z/2)$ is a $H^{*,*}(k,\Z/2)=H^{*} _{et}(k,\mu^* _2)[\tau]$-module generated by $uv^i$ and $v^i$ where $u$ is a $(1,1)$ element and $v$ is a $(2,1)$ element and $\tau$ is a $(0,1)$ element (Theorem 6.10 \cite{Voc}). We also have the relation $u^2=v\tau$. We can write the equality as $$H^{a+p\sigma,b+q\sigma} _{C _2}(k,\Z/2)=H^{a+2q\sigma,b+q\sigma} _{C _2}(k,\Z/2)=(H^{*} _{et}(k,\mu^* _2)[\tau,u,v]/(u^2=v\tau))^{(a-1,b)},$$ for any $b+q<0$ and $a>b$ or $a\leq -2$.
\end{proof}
\begin{proposition}\label{case0} We have  $\rH^{a+p\sigma,0} _{C _2}(\EGt C_2,A)=0$ for any $a,p$ and any abelian group $A$.
\end{proposition}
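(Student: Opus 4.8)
The plan is to reduce the computation to the finite stages of the colimit presentation $\EGt C_2=\colim_n T^{n\sigma}$, with $T^{n\sigma}=S^{2n\sigma,n\sigma}$, and to show that each stage already has vanishing weight-$0$ cohomology. Since $\EGt C_2$ is the filtered homotopy colimit of the suspension spectra of the $T^{n\sigma}$, the group $\rH^{a+p\sigma,0}_{C_2}(\EGt C_2,A)$ fits into a Milnor exact sequence
$$0\rightarrow {\lim_n}^1\rH^{a-1+p\sigma,0}(T^{n\sigma},A)\rightarrow \rH^{a+p\sigma,0}_{C_2}(\EGt C_2,A)\rightarrow \lim_n\rH^{a+p\sigma,0}(T^{n\sigma},A)\rightarrow 0,$$
so it suffices to prove that $\rH^{a+p\sigma,0}(T^{n\sigma},A)=0$ for all $a,p$ and all $n\geq 1$.

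First I would strip the $\sigma$-part of the cohomology index. By the suspension isomorphism for the sphere $T^{n\sigma}=S^{2n\sigma,n\sigma}$ we have $\rH^{a+p\sigma,0}(T^{n\sigma},A)=H^{a+(p-2n)\sigma,-n\sigma}_{C_2}(k,A)$. The weight index $-n\sigma$ has $b=0$ and $q=-n$, so $b+q=-n<0$ as soon as $n\geq 1$; hence the preceding proposition (the one reducing $H^{a+p\sigma,b+q\sigma}_{C_2}(k,A)$ to its value at $p=2q$ when $b+q<0$) applies and identifies this group, independently of the $\sigma$-part of the cohomology index, with $H^{a-2n\sigma,-n\sigma}_{C_2}(k,A)=\rH^a_{C_2}(T^{n\sigma},A(0))$. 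Thus everything comes down to the weight-$0$, integer-indexed cohomology of $T^{n\sigma}$.

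To compute the latter I would feed the defining cofiber sequence $\A(n\sigma)\setminus\{0\}_+\rightarrow S^0\rightarrow T^{n\sigma}$ into the long exact sequence
$$\cdots\rightarrow H^{a,0}(k,A)\rightarrow H^a_{C_2}(\A(n\sigma)\setminus\{0\},A(0))\rightarrow \rH^{a+1}_{C_2}(T^{n\sigma},A(0))\rightarrow H^{a+1,0}(k,A)\rightarrow\cdots,$$
where $H^a_{C_2}(\A(n\sigma)\setminus\{0\},A(0))=H^{a,0}(\A(n\sigma)\setminus\{0\}/C_2,A)$ because the action on $\A(n\sigma)\setminus\{0\}$ is free. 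The crux is then a purely weight-$0$ statement: for a smooth connected $k$-scheme $Y$ the weight-$0$ motivic cohomology $H^{a,0}(Y,A)=H^a_{Nis}(Y,A)$ equals $A$ for $a=0$ and vanishes otherwise, and restriction along the structure map $Y\rightarrow \spec k$ is an isomorphism. Applying this to the irreducible, hence connected, quotient $Y=(\A(n\sigma)\setminus\{0\})/C_2$ shows that $H^{a,0}(k,A)\rightarrow H^a_{C_2}(\A(n\sigma)\setminus\{0\},A(0))$ is an isomorphism for every $a$, which forces the relative term $\rH^{a+1}_{C_2}(T^{n\sigma},A(0))$ to vanish. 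By the reduction of the previous paragraph this yields $\rH^{a+p\sigma,0}(T^{n\sigma},A)=0$ for all $a,p$ and $n\geq 1$, whence both outer terms of the Milnor sequence are zero and $\rH^{a+p\sigma,0}_{C_2}(\EGt C_2,A)=0$.

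The step I expect to carry the real content is the weight-$0$ comparison in the last paragraph: it is precisely the fact that the structure map induces an isomorphism on weight-$0$ cohomology of the free quotient that collapses the long exact sequence, and this is special to weight $0$, since in other weights the analogous terms need not agree and the argument would break down. I would also be careful that the reduction through the preceding proposition is only licensed because $b+q=-n<0$, that is, because we are genuinely at a stage $n\geq 1$ of the colimit.
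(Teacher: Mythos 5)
Your argument is correct, but it takes a genuinely different route from the paper's. The paper first removes the $\sigma$-part of the cohomology index using the $(\sigma-1,0)$ and $(\sigma-1,\sigma-1)$ periodicities of $\EGt C_2$ (Proposition 2.9 of \cite{HOV1}), and then feeds the isotropy sequence $\EG C_{2+}\rightarrow S^0\rightarrow \EGt C_2$ through the identification $H^{a,0}_{C_2}(\EG C_2,A)=H^{a,0}(\BG C_2,A)$: since the restriction $H^{a,0}(k,A)\rightarrow H^{a,0}(\BG C_2,A)$ is an isomorphism in every degree (both groups are $A$ for $a=0$ and vanish otherwise), the long exact sequence collapses and gives the vanishing in one stroke. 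You instead work with the cell filtration $\EGt C_2=\colim_n T^{n\sigma}$: you strip the $\sigma$-index stage by stage via the proposition immediately preceding this one (legitimately, as you note, because the weight $-n\sigma$ has $b+q=-n<0$, and harmlessly discarding the $n=0$ stage since $\lim$ and ${\lim}^1$ only see the tail of the tower), you compute each stage from the cofiber sequence $\A(n\sigma)\setminus\{0\}_+\rightarrow S^0\rightarrow T^{n\sigma}$ together with the weight-$0$ motivic cohomology of the irreducible free quotient $(\A(n\sigma)\setminus\{0\})/C_2$, and you assemble the answer with a Milnor $\lim$--${\lim}^1$ sequence in $\SH_{C_2}$. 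The two proofs rest on the same geometric input---weight-$0$ motivic cohomology cannot distinguish a connected smooth scheme from $\spec(k)$---but you apply it to the finite-dimensional quotients, whereas the paper applies it to $\BG C_2$ itself, which, being a colimit of those same quotients, implicitly hides your finite-stage computation. What your route buys is independence from the periodicity results of \cite{HOV1} and from the Borel-type identification, so the argument is more self-contained and stays at the level of concrete schemes; the price is invoking the Milnor sequence and rerunning the collapsing argument for every $n$ instead of once.
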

\begin{proof} From the periodicities $(\sigma-1,\sigma-1)$ and $(\sigma-1,0)$ of $\EGt C _2$ (Proposition 2.9 \cite{HOV1}) we have that $\rH^{a+p\sigma,0} _{C _2}(\EGt C_2)=\rH^{a,0} _{C _2}(\EGt C_2)$. We  have from the motivic isotropy sequence that 
$$0\rightarrow\rH^{a,0} _{C _2}(\EGt C _2)\rightarrow H^{a,0} _{C _2}(k)\simeq H^{a,0} _{C _2}(\EG C _2)\rightarrow \rH^{a+1,0} _{C _2}(\EGt C_2)\rightarrow 0$$ because $H^{a,0} _{C _2}(k)=H^{a,0}(k)\simeq H^{a,0} _{C _2}(\EG C _2)=H^{a,0} (\BG C _2)$ for any $a$ (which also implies that the left map of the sequence is injective). In the case of $a\neq 0$ both sides of the isomorphism are zero. When $a=0$ the isomorphism is given by the map $H^{0,0}(k,\Z)=\Z\rightarrow H^{0,0}(\BG C _2,\Z)=\Z$ that sends a generator to a generator. This implies that $\rH^{a,0} _{C _2}(\EGt C _2)=0$ for any $a$. 
\end{proof}

\begin{proposition} \label{0b} Let  $b+q=0$, $b\leq 0$. Then $H^{a+p\sigma,b+q\sigma} _{C _2}(k,\Z)=H^{a+2q+(p-2q)\sigma,0} _{C _2}(k,\Z).$
\end{proposition}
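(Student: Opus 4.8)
The plan is to run the motivic isotropy cofiber sequence $\EG C_{2+}\to S^0\to \EGt C_2$ twice, once at the weight $b+q\sigma$ and once at the trivial weight $0$, bridging the two computations by the periodicities of the Bredon motivic cohomology of $\EG C_2$ and $\EGt C_2$ recorded in Proposition 2.9 and Theorem 5.4 of \cite{HOV1}. The key observation is that the hypothesis forces the weight representation to be an integer multiple of $\sigma-1$: since $b\leq 0$ and $b+q=0$ we have $q=-b\geq 0$ and $b+q\sigma=q(\sigma-1)$.

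First I would show that $\rH^{a'+p'\sigma,\,b+q\sigma}_{C_2}(\EGt C_2,\Z)=0$ for all $a',p'$. Applying the $(\sigma-1,\sigma-1)$-periodicity of $\EGt C_2$ a total of $q$ times reduces the weight $q(\sigma-1)$ to the trivial weight $0$, where Proposition \ref{case0} gives the vanishing. Feeding this (in the two consecutive degrees occurring in the long exact sequence) into the isotropy sequence yields
$$H^{a+p\sigma,\,b+q\sigma}_{C_2}(k,\Z)\simeq H^{a+p\sigma,\,b+q\sigma}_{C_2}(\EG C_2,\Z).$$
Next I would transport the weight to $0$ on the $\EG C_2$ side: the $(2\sigma-2,\sigma-1)$-periodicity of $\EG C_2$, applied $q$ times, shifts the weight by $-q(\sigma-1)$ and the cohomology index by $-2q(\sigma-1)=2q-2q\sigma$, and since $b+q=0$ the new weight is exactly $0$, giving
$$H^{a+p\sigma,\,b+q\sigma}_{C_2}(\EG C_2,\Z)\simeq H^{(a+2q)+(p-2q)\sigma,\,0}_{C_2}(\EG C_2,\Z).$$
Finally, at weight $0$ Proposition \ref{case0} applies directly, so the same isotropy-sequence argument identifies the right-hand side with $H^{(a+2q)+(p-2q)\sigma,\,0}_{C_2}(k,\Z)$; composing the three isomorphisms proves the statement.

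Most of the work is the bookkeeping of the bidegrees under the two periodicities, which is entirely analogous to the proof of Proposition \ref{van}. The one point that genuinely uses the hypothesis, and which I regard as the crux, is the reduction in the first step: it is precisely the borderline condition $b+q=0$ (so that $b+q\sigma$ is a multiple of $\sigma-1$) that lets the $(\sigma-1,\sigma-1)$-periodicity of $\EGt C_2$ land on weight $0$, where Proposition \ref{case0} is available. For strictly negative total weight one instead lands in negative motivic weight and recovers the vanishing of Proposition \ref{van}. The condition $q=-b\geq 0$ ensures the periodicities are applied in the admissible direction, and the case $q=0$ is trivial.
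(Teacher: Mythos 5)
Your proposal is correct and follows essentially the same route as the paper: the motivic isotropy sequence combined with Proposition \ref{case0}, the $(\sigma-1,\sigma-1)$-periodicity of $\EGt C_2$, and the $(2\sigma-2,\sigma-1)$-periodicity of $\EG C_2$, applied exactly as in the paper's proof. Your spelling out of the $\EGt C_2$ vanishing via periodicity is in fact slightly more explicit than the paper's terse citation of Proposition \ref{case0}, but the argument is the same.
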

\begin{proof} We use the motivic isotropy cofiber sequence $\EG C_{2+}\rightarrow S^0\rightarrow \EGt C_2$. Using Proposition \ref{case0} we get for $b\leq 0$ that $$\rH^{a,b} _{C _2}(\EGt C_2)=0\rightarrow H^{a+p\sigma,b+q\sigma} _{C _2}(k)\rightarrow H^{a+p\sigma,b+q\sigma} _{C _2}(\EG C _2)\rightarrow \rH^{a+1,b}  _{C _2}(\EGt C _2)=0$$ so we have $H^{a+p\sigma,b+q\sigma}  _{C _2}(k)=H^{a+p\sigma,b+q\sigma}  _{C _2}(\EG C _2)=H^{a+2q+(p-2q)\sigma,0}  _{C _2}(\EG C _2)$ from the $(2\sigma-2,\sigma-1)$ periodicity of $\EG C_2$ and $b+q=0$. This means that $$H^{a+p\sigma,b+q\sigma}  _{C _2}(k)=H^{a+2q+(p-2q)\sigma,0}  _{C _2}(\EG C _2)=H^{a+2q+(p-2q)\sigma,0}  _{C _2}(k).$$ 
\end{proof}
We will compute below  the abelian groups $H^{a+p\sigma,0}  _{C _2}(k,\Z)$. We have the following l.e.s.
$$\rightarrow H^{a+p,0} (k,\Z)\rightarrow H^{a+p\sigma,0}  _{C _2}(k,\Z)\rightarrow H^{a+(p+1)\sigma,0}  _{C _2}(k,\Z)\rightarrow H^{a+p+1,0} (k,\Z)\rightarrow.$$
It implies that $H^{a+p\sigma,0} _{C _2}(k,\Z)\simeq H^{a-(a+1)\sigma,0} _{C _2}(k,\Z)$ for any $p\leq -a-1$ and that $H^{a+p\sigma,0} _{C _2}(k,\Z)\simeq H^{a-(a-1)\sigma,0}  _{C _2}(k,\Z)$ for any $p\geq -a+1$. It is enough to compute the groups $H^{a-(a+1)\sigma,0} _{C _2}(k,\Z)$, $H^{a-a\sigma,0} _{C _2}(k,\Z)$ and $H^{a-(a-1)\sigma,0} _{C _2}(k,\Z)$ for any $a$. We have a l.e.s. between these groups for any $a$
$$0\rightarrow H^{a-(a+1)\sigma,0} _{C _2}(k,\Z)\rightarrow H^{a-a\sigma,0} _{C _2}(k,\Z)\rightarrow H^{0,0} _{C _2}(k,\Z)\rightarrow $$
$$\rightarrow H^{a+1-(a+1)\sigma,0} _{C _2}(k,\Z)\rightarrow H^{a+1-a\sigma,0} _{C _2}(k,\Z)\rightarrow 0.$$
Notice that by definition $H^{a-a\sigma,0} _{C _2}(k,\Z)=H^a _{C _2}(k,Z _{top}(-a\sigma))$, $H^{a+1-a\sigma,0} _{C _2}(k,\Z)=H^{a+1} _{C _2}(k,Z _{top}(-a\sigma))$ and $H^{a-(a+1)\sigma,0} _{C _2}(k,\Z)=H^{a} _{C _2}(k,Z _{top}((-a-1)\sigma))$.
\begin{proposition} \label{neg} We have that $H^{a-(a-1)\sigma,0} _{C _2}(k,\Z)=0$ for any $a\geq 1$ or $a=-1$ and $\Z/2$ if $a=0$. Also $H^{a-(a+1)\sigma,0} _{C _2}(k,\Z)=0$ for $a\leq 1$.
\end{proposition}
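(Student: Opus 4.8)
The plan is to read off every asserted value directly from the long exact sequence \ref{connec} in weight $0$, using only that weight-$0$ motivic cohomology of $k$ is concentrated in degree $0$, where $H^{0,0}(k,\Z)=\Z$, together with the behaviour of the two maps occurring in \ref{connec}. I abbreviate $G_{a,p}:=H^{a+p\sigma,0}_{C_2}(k,\Z)$, so that $G_{a,0}=H^{a,0}_{C_2}(k,\Z)=H^{a,0}(k,\Z)$ is $\Z$ for $a=0$ and $0$ otherwise. The two inputs are: the transfer $p_*\colon H^{0,0}(k,\Z)=\Z\to G_{0,0}=\Z$ (the map induced by $C_{2+}\to S^0$) is multiplication by $2$, while the restriction $G_{0,0}\to H^{0,0}(k,\Z)$ sends the unit to the unit and so is an isomorphism. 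The first of these is Proposition \ref{comp}; equivalently it is visible from the model $Z_{top}(\sigma)=\mathrm{Cone}(\Z_{tr}(C_2)^{C_2}\to\Z_{tr}(k)^{C_2})$, which over $\spec k$ becomes $\mathrm{Cone}(\Z\xrightarrow{2}\Z)$, the differential being the pushforward along $C_2\to\spec k$.

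First I dispose of the two stable ranges using the stabilization isomorphisms recalled just before the statement. For $a\geq 1$ one has $-a+1\leq 0$, so $p=0$ lies in the stable range $p\geq -a+1$; hence $H^{a-(a-1)\sigma,0}_{C_2}(k,\Z)=G_{a,-a+1}\cong G_{a,0}=H^{a,0}(k,\Z)=0$. Symmetrically, for $a\leq -1$ one has $-a-1\geq 0$, so $p=0$ lies in the stable range $p\leq -a-1$ and $H^{a-(a+1)\sigma,0}_{C_2}(k,\Z)=G_{a,-a-1}\cong G_{a,0}=0$. This already gives the vanishing of the first group for all $a\geq 1$ and of the second for all $a\leq -1$.

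It remains to compute the boundary values $a\in\{-1,0,1\}$ from short segments of \ref{connec} in which the only nonzero motivic term is a single $H^{0,0}(k,\Z)=\Z$. For $A_0=G_{0,-1}$ the segment $0=H^{-1,0}(k)\to G_{0,-1}\to G_{0,0}\xrightarrow{\delta}H^{0,0}(k)$ has $\delta$ the restriction, an isomorphism, so $G_{0,-1}=\ker\delta=0$. For $C_0=G_{0,1}$ the segment $H^{0,0}(k)\xrightarrow{p_*}G_{0,0}\to G_{0,1}\to H^{1,0}(k)=0$ has $p_*$ the transfer, i.e. $\times 2$, so $C_0=\coker(p_*)=\Z/2$. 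For $G_{-1,1}$ the segment $0=G_{-1,0}\to G_{-1,1}\xrightarrow{\delta}H^{0,0}(k)\xrightarrow{p_*}G_{0,0}$ makes $\delta$ injective with image $\ker(p_*)=\ker(\times 2)=0$, so $G_{-1,1}=0$; feeding this into $H^{0,0}(k)\to G_{-1,1}=0\to G_{-1,2}\to H^{1,0}(k)=0$ gives the case $a=-1$ of the first group, $G_{-1,2}=0$. Finally $A_1=G_{1,-2}$ is reached in two steps: in $G_{0,0}\xrightarrow{\delta}H^{0,0}(k)\xrightarrow{p_*}G_{1,-1}\to G_{1,0}=0$, with $\delta$ again the restriction, $\mathrm{im}(\delta)=\Z=\ker(p_*)$ forces $p_*=0$ and hence $G_{1,-1}=\mathrm{im}(p_*)=0$; then $0=H^{-1,0}(k)\to G_{1,-2}\to G_{1,-1}=0$ gives $A_1=0$.

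The one genuinely nontrivial ingredient, and the step I expect to demand the most care, is the identification of the maps in \ref{connec}: that the transfer $p_*\colon H^{0,0}(k,\Z)\to G_{0,0}$ is multiplication by $2$—this lone factor of $2$ produces both the single $\Z/2$ in $C_0$ and, via $G_{-1,1}=\ker(\times 2)=0$, the vanishing at $a=-1$—and that the connecting map $\delta$ restricts, on the relevant $H^{0,0}$, to an isomorphism. Once these are secured (from Proposition \ref{comp} and the unit-to-unit description of the restriction, or from the explicit complex $Z_{top}(\sigma)\simeq\mathrm{Cone}(\Z\xrightarrow{2}\Z)$ over $\spec k$), everything else is formal bookkeeping with the exact sequence \ref{connec} against the concentration of weight-$0$ motivic cohomology of $k$ in degree $0$.
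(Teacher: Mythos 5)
Your proof is correct, but it is genuinely different from the paper's. The paper computes these groups from the explicit complexes $Z_{top}(n\sigma)$: the bulk vanishing ($a\geq 1$ in the first family, $a\leq -1$ in the second) comes from the fact that $Z_{top}(b\sigma)$ is concentrated in cohomological degrees between $-b$ and $0$ (resp.\ $0$ and $-b$), so hypercohomology of the point outside that range vanishes; the $a=0$ values come from evaluating $Z_{top}(\sigma)(k)=[\Z\xrightarrow{2}\Z]$ and $Z_{top}(-\sigma)(k)=[\Z\xrightarrow{\id}\Z]$; and the l.e.s.\ \ref{connec} is invoked only for the edge cases $a=\mp 1$. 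You instead stay entirely inside \ref{connec}: the stabilization isomorphisms handle the bulk, and exactness plus your two map identifications (restriction $H^{0,0}_{C_2}(k)\to H^{0,0}(k)$ an isomorphism, transfer $H^{0,0}(k)\to H^{0,0}_{C_2}(k)$ multiplication by $2$) handle all five boundary cases. This is the same technique the paper uses later for the positive and negative cones (e.g.\ Corollary \ref{c33}), so your route is legitimate and arguably more uniform; what the paper's route buys is that it never needs to know which map in \ref{connec} is the restriction and which is the transfer---only exactness is used. That identification is the one load-bearing point of your argument, and you correctly flag it: Proposition \ref{comp} by itself only says the composite $H^{0,0}_{C_2}(k)\to H^{0,0}(k)\to H^{0,0}_{C_2}(k)$ is multiplication by $2$, which is symmetric between the scenario ``$\delta$ iso, $p_*=\times 2$'' and the scenario ``$\delta=\times 2$, $p_*$ iso'', and the two scenarios give different answers ($\Z/2$ versus $0$) for $H^{\sigma,0}_{C_2}(k,\Z)$. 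Your claim that $\delta$ is the unit-preserving restriction is what breaks the symmetry; it is true (the connecting map is Spanier--Whitehead dual to the fold $C_{2+}\to S^0$, which is exactly the duality invoked in the proof of Proposition \ref{comp}, and the forgetful map is an isomorphism for trivial actions by Proposition \ref{triv}), and the paper makes identifications of the same kind elsewhere (``sends a generator to a generator'' in the proof of Proposition \ref{case0}), but a careful write-up should state this compatibility explicitly, since the paper's own proof of this proposition is arranged so as not to need it.
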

\begin{proof} The complex $Z _{top}(n\sigma)=Z _{top}(\sigma)\otimes ^{tr}...\otimes^{tr}Z _{top}(\sigma)$ with $n>0$ is a complex of presheaves  on the positions $-n,...,-1,0$ and it satisfies descent in Nisnevich topology on $Sm/k$. Here $Z _{top}(\sigma)$ is the complex $\Z _{tr}(C _2)^{C _2}\rightarrow \Z _{tr}(k)^{C _2}$ on the positions $-1$ and $0$. If $n=0$ then $Z _{top}(0)=\Z$ on the position zero. If $n<0$ then $Z _{top}(n\sigma)=Z _{top}(-\sigma)\otimes ^{tr}...\otimes^{tr}Z _{top}(-\sigma)$ on the positions $0,1...-n$ with $Z _{top}(-\sigma)$ being the complex $\Z _{tr}(k)^{C _2}\rightarrow \Z _{tr}(C _2)^{C _2}$ on the positions $0,1$. This implies that $H^{a-(a-1)\sigma,0} _{C _2}(k)=H^{-b+1}_{C_2}(k,Z _{top}(b\sigma))=0$ if $b\leq 0$ (we wrote $b=-a+1$). It implies that $H^{a-(a-1)\sigma,0} _{C _2}(k)=0$ for $a\geq 1$ . If $a=0$ then 
$H^{\sigma,0} _{C _2}(k,\Z)=H^0(Z _{top}(\sigma)(k))$ where $Z _{top}(\sigma)(k)$ is the complex $\Z\stackrel{2}{\rightarrow}\Z$ on the positions $-1,0$. It implies that $H^{\sigma,0} _{C _2}(k,\Z)=\Z/2$. If $a=-1$ then we have the exact sequence $$H^{0,0} _{C _2}(k,\Z)\rightarrow H^{-1+\sigma,0} _{C _2}(k,\Z)=0\rightarrow H^{-1+2\sigma,0} _{C _2}(k,\Z)\rightarrow H^{1,0} _{C _2}(k,\Z)=0$$ so we conclude that 
$H^{-1+2\sigma,0} _{C _2}(k,\Z)=0$.

We also have that $H^{a-(a+1)\sigma,0} _{C _2}(k)=H^{-b-1}_{C_2}(k,Z _{top}(b\sigma))=0$ if $b\geq 0$ (we wrote $b=-a-1$). It implies that $H^{a-(a+1)\sigma,0} _{C _2}(k)=0$ for $a\leq -1$. If $a=0$ then $$H^{-\sigma,0} _{C _2}(k,\Z)=H^0(Z _{top}(-\sigma)(k))=0$$ because $Z _{top}(-\sigma)(k)$ is the complex $\Z\stackrel{id}{\rightarrow} \Z$ on the positions $0$ and $1$. If $a=1$ then we have a sequence $$0\rightarrow H^{1-2\sigma,0} _{C _2}(k,\Z)\rightarrow H^{1-\sigma,0} _{C _2}(k,\Z)=0\rightarrow H^{0,0} _{C _2}(k,\Z)=H^{0,0} (k,\Z)=\Z$$ so we conclude that $H^{1-2\sigma,0} _{C _2}(k,\Z)=0$.
\end{proof}
In the next proposition we will study the cohomology of the complexes $Z _{top}(\pm \sigma)$ and $Z _{top}(\pm 2\sigma)$.
\begin{proposition}\label{case2}
 We have that $H^{a+2\sigma,0}_{C _2}(k, \Z)=0$ if $a\neq -2,0$ and  $H^{2\sigma,0}_{C _2}(k, \Z)=\Z/2$, $H^{-2+2\sigma,0}_{C _2}(k, \Z)=\Z$. We also have $H^{2-2\sigma,0}_{C _2}(k, \Z)=\Z$ and $H^{a-2\sigma,0}_{C _2}(k, \Z)=0$ for any $a\neq 2$.

We have $H^{\sigma,0}_{C _2}(k, \Z)=\Z/2$, $H^{a+\sigma} _{C _2}(k,\Z)=0$ for $a\neq 0$ and $H^{a-\sigma,0}_{C _2}(k, \Z)=0$ for any $a$.
\end{proposition}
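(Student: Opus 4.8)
The plan is to identify each group with the cohomology of an explicit complex of abelian groups over $\spec k$ and then to propagate from weight indices $p=\pm 1$ to $p=\pm 2$ using the long exact sequence \eqref{connec}. For weight $0$ one has $H^{a+p\sigma,0}_{C_2}(k,\Z)=H^a(Z_{top}(p\sigma)(k))$, since $\spec k$ is a point in the equivariant Nisnevich topology, so hypercohomology over it reduces to the cohomology of the complex of sections, exactly the identification used in the proof of Proposition \ref{neg}. First I would dispose of $p=\pm 1$ directly from the complexes already described there: $Z_{top}(\sigma)(k)$ is $\Z\xrightarrow{2}\Z$ in degrees $-1,0$, with cohomology $\Z/2$ in degree $0$ and $0$ otherwise, so $H^{\sigma,0}_{C_2}(k,\Z)=\Z/2$ and $H^{a+\sigma,0}_{C_2}(k,\Z)=0$ for $a\neq 0$; and $Z_{top}(-\sigma)(k)$ is $\Z\xrightarrow{\id}\Z$ in degrees $0,1$, which is acyclic, so $H^{a-\sigma,0}_{C_2}(k,\Z)=0$ for all $a$.

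For the weight indices $p=\pm 2$ I would feed these answers into \eqref{connec} with $b=q=0$, using that weight-zero motivic cohomology of the field is concentrated in degree $0$: $H^{0,0}(k,\Z)=\Z$ and $H^{n,0}(k,\Z)=0$ for $n\neq 0$. Taking $p=1$ yields, for each $a$, the exact piece
$$H^{a+1,0}(k,\Z)\to H^{a+\sigma,0}_{C_2}(k,\Z)\to H^{a+2\sigma,0}_{C_2}(k,\Z)\to H^{a+2,0}(k,\Z).$$
For $a\notin\{-2,-1,0\}$ the outer motivic groups and the $p=1$ group all vanish, so $H^{a+2\sigma,0}_{C_2}(k,\Z)=0$. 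For $a=0$ both $H^{1,0}(k,\Z)$ and $H^{2,0}(k,\Z)$ vanish, so the middle arrow is an isomorphism $\Z/2=H^{\sigma,0}_{C_2}(k,\Z)\xrightarrow{\sim}H^{2\sigma,0}_{C_2}(k,\Z)$. For $a=-1$ the flanking groups vanish and one gets $0$. For $a=-2$ the relevant four-term piece is $H^{-2+\sigma,0}_{C_2}(k,\Z)\to H^{-2+2\sigma,0}_{C_2}(k,\Z)\to H^{0,0}(k,\Z)\to H^{-1+\sigma,0}_{C_2}(k,\Z)$ with both ends zero, so $H^{-2+2\sigma,0}_{C_2}(k,\Z)\cong H^{0,0}(k,\Z)=\Z$. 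Taking instead $p=-2$, every $p=-1$ group vanishes, so \eqref{connec} collapses to $0\to H^{a-2,0}(k,\Z)\to H^{a-2\sigma,0}_{C_2}(k,\Z)\to 0$, giving $H^{a-2\sigma,0}_{C_2}(k,\Z)\cong H^{a-2,0}(k,\Z)$, which is $\Z$ for $a=2$ and $0$ otherwise.

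Along this route there is essentially no obstacle: exactness forces every identification once the neighbouring motivic and $p=\pm 1$ groups are known to vanish, the only place demanding care being $a=0,\ p=2$, where one must observe that both adjacent motivic groups $H^{1,0}(k,\Z)$ and $H^{2,0}(k,\Z)$ are zero in order to promote $H^{\sigma,0}_{C_2}\to H^{2\sigma,0}_{C_2}$ to an isomorphism. I would contrast this with the more hands-on alternative of computing $Z_{top}(\pm 2\sigma)(k)$ directly as the total complex of $Z_{top}(\pm\sigma)(k)^{\otimes 2}$: there the genuine subtlety is that the tensor product is $\otimes^{tr}$, so the extreme term is $\Z_{tr}(C_2\times C_2)^{C_2}(k)\cong\Z\{O_1\}\oplus\Z\{O_2\}$ of rank $2$, recording the two free orbits $O_1,O_2$ of $C_2\times C_2$ rather than the rank-one group a naive tensor product would predict. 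One then has to track both the Koszul signs of the total differential and the effect of the two projections (each sending $[O_i]$ to the orbit class $[C_2]$) and of the external products (sending $[C_2]$ to $[O_1]+[O_2]$); getting the relative sign right is precisely what forces the middle cohomologies $H^{-1}(Z_{top}(2\sigma)(k))$ and $H^{1}(Z_{top}(-2\sigma)(k))$ to vanish. The long exact sequence sidesteps this sign analysis entirely, which is why I would prefer it.
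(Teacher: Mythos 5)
Your proposal is correct and coincides with the paper's own argument: after the same identification of $H^{a\pm\sigma,0}_{C_2}(k,\Z)$ with the cohomology of the two-term complexes $Z_{top}(\pm\sigma)(k)$, the paper also derives the $\pm2\sigma$ cases from the long exact sequence induced by $C_{2+}\to S^0\to S^\sigma$, noting explicitly that this avoids computing the differentials of $Z_{top}(\pm2\sigma)$. The only difference is cosmetic: the paper additionally records the hands-on computation of those differentials (the rank-two group $\Z_{tr}(C_2\times C_2)^{C_2}(k)$ and the maps $(q_1,-q_2)$ and $p+p$) as a first proof, which your route, like the paper's second one, sidesteps.
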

\begin{proof}
We have that $H^{a-a\sigma,0}_{C _2}(k)=H^a(Z _{top}(-a\sigma)(k))$. If $a=-1$ then the complex is $\Z\stackrel{2}{\rightarrow} \Z$ on the positions $-1$ and $0$ so  $H^{\sigma,0}_{C _2}(k,\Z)=H^{0}(Z _{top}(\sigma)(k))=\Z/2$ and $H^{a+\sigma,0}_{C _2}(k,\Z)=0$ for $a\neq 0$. If $a=1$ then the complex is $\Z\stackrel{id}{\rightarrow} \Z$ on the positions $0,1$ so $H^{a-\sigma,0}_{C _2}(k,\Z)=H^{a}(Z _{top}(-\sigma)(k))=0$ for any $a$. 

Let's compute the differentials of the complex $Z _{top}(2\sigma)$. We have the projections $q _i: C _2\times C _2\rightarrow C _2$  for $i=1,2$ and $p:C _2\rightarrow pt$. They induce the maps $q _i: \Z _{tr}(C _2\times C _2)^{C _2}\rightarrow \Z _{tr}(C _2)^{C _2}$ and $q^t _i: \Z _{tr}(C _2)^{C _2}\rightarrow \Z _{tr}(C _2\times C _2)^{C _2}$ and $p: \Z _{tr}(C _2)^{C _2}\rightarrow \Z _{tr}(k)^{C _2}$ and $p^t: \Z _{tr}(k)^{C _2}\rightarrow \Z _{tr}(C _2)^{C _2}$. The action of $C _2$ on $C _2=\{0,1\}$ switches the copies of $k$ i.e. $0\rightarrow 1$ and $1\rightarrow 0$ and induces a pointwise action on $C _2\times C _2$ i.e $(0,0)\rightarrow (1,1)$, $(0,1)\rightarrow (1,0)$, $(1,0)\rightarrow (0,1)$, $(1,1)\rightarrow (0,0)$. The map $p: \Z _{tr}(C _2)^{C _2}(k)\rightarrow \Z _{tr}(k)(k)$ is $p(a)=2a$ because the map is induced by $p: \Z _{tr}(C _2)(k)\rightarrow \Z _{tr}(k)(k)$ which is $p(a,b)=(a,b)(1,1)^T=a+b$. 

The dual  $p^t: \Z _{tr}(k)(k)\rightarrow \Z _{tr}(C _2)(k)$ is given by $p(a)=(a,a)$ which is given by the transpose matrix $(1,1)$.  If we look at $q^t _1: \Z _{tr}(C _2)(k)\rightarrow \Z _{tr}(C _2\times C _2)(k)$ we see that is given by $q^t _1(a,b)=(a,b,a,b)=(a,b)(I _2,I _2)$ which on fixed points gives $q^t _1: \Z _{tr}(C _2)(k)^{C_2}\rightarrow \Z _{tr}(C _2\times C _2)(k)^{C_2}$ with $q^t _1(a)=(a,a)$. Looking at the transpose we obtain that $q _i: \Z _{tr}(C _2\times C _2)\rightarrow \Z _{tr}(C _2)$, $q _i(a,b,c,d)=(a+c,b+d)=(a,b,c,d)(I _2,I _2)^T$ implying that on fixed points we get $q _i: \Z _{tr}(C _2\times C _2)^G\rightarrow \Z _{tr}(C _2)^G$, $q _i(a,b)=a+b$. 

 In conclusion we have the following complex 
$$0\rightarrow \Z _{tr}(C _2\times C _2)^{C _2}\rightarrow \Z _{tr}(C _2)^{C _2}\oplus  \Z _{tr}(C _2)^{C _2}\rightarrow \Z _{tr}(k)\rightarrow 0$$
with the first map of the complex given by $(q _1, -q _2)$ i.e. $g(a,b)=(a+b,-a-b)$ and the second map is $f(a,b)=2a+2b=p+p$.  It implies that $H^{-2+2\sigma,0}(k,\Z)=\Z$, $H^{-1+2\sigma,0}(k,\Z)=0$ and $H^{2\sigma,0}(k,\Z)=H^{\sigma,0}(k,\Z)=\Z/2$.

The complex $Z _{top}(-2\sigma)$ is the following complex
$$0\rightarrow \Z _{tr}(k)\rightarrow \Z _{tr}(C _2)^{C _2}\oplus \Z _{tr}(C _2)^{C_2}\rightarrow \Z _{tr}(C _2\times C _2)^G\rightarrow 0$$
with the first map being $g(a)=(a,a)$ and the second map $f(a,b)=(a-b,a-b)$. It implies that $H^{2-2\sigma,0}(k,\Z)=\Z$ and $H^{n-2\sigma,0}(k,\Z)=0$ for $n\neq 2$.

Notice that we can compute the cohomology of the complexes $Z _{top}(\pm 2\sigma)$ directly from the cohomology of the complexes $Z _{top}(\pm \sigma)$ (without the need to compute the differentials in $Z _{top}(\pm 2\sigma)$). We have the following l.e.s.
$$0\rightarrow H^{1-2\sigma,0} _{C _2}(k,\Z)\rightarrow H^{1-\sigma,0} _{C _2}(k,\Z)=0\rightarrow H^{0,0} _{C _2}(k,\Z)\rightarrow$$
$$\rightarrow H^{2-2\sigma,0} _{C _2}(k,\Z)\rightarrow H^{2-\sigma,0} _{C _2}(k,\Z)=0\rightarrow 0.$$
It implies that $H^{1-2\sigma,0} _{C _2}(k,\Z)=0$ and $H^{2-2\sigma,0} _{C _2}(k,\Z)=\Z$. In conclusion $H^{a-2\sigma,0} _{C _2}(k,\Z)=0$ if $a\neq 2$.
 We have the following l.e.s.
 $$0\rightarrow H^{-2+\sigma,0} _{C _2}(k,\Z)=0\rightarrow H^{-2+2\sigma,0} _{C _2}(k,\Z)\rightarrow H^{0,0} _{C _2}(k,\Z)=\Z\rightarrow$$
$$\rightarrow H^{-1+\sigma,0} _{C _2}(k,\Z)=0\rightarrow H^{-1+2\sigma,0} _{C _2}(k,\Z)\rightarrow 0.$$

This implies that $H^{-2+2\sigma,0} _{C _2}(k,\Z)=\Z$ and $H^{-1+2\sigma,0} _{C _2}(k,\Z)=0$. We also have $H^{2\sigma,0} _{C _2}(k,\Z)=H^{\sigma,0} _{C _2}(k,\Z)=\Z/2$ and
$H^{a+2\sigma,0} _{C _2}(k,\Z)=0$ if $a\neq -2,0$.
\end{proof}

We compute below the negative cone of weight 0 Bredon motivic cohomology.
\begin{proposition} If $n=even\geq 4$ then 
$$H^{m-n\sigma,0}_{C _2}(k, \Z) =
\begin{cases}
\Z/2 & 3\leq m<n, m=odd \\
\Z& m=n\\ 
0& otherwise\\
\end{cases}.$$

If $n=odd\geq 3$ then $$H^{m-n\sigma,0}_{C _2}(k, \Z) =
\begin{cases}
\Z/2 & 3\leq m\leq n, m=odd \\ 
0& otherwise\\
\end{cases}.$$
\end{proposition}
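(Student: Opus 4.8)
The plan is to reduce everything to three ``diagonal'' families and then run a single induction whose only geometric input is Proposition \ref{comp}. Write
$A_a=H^{a-(a+1)\sigma,0}_{C_2}(k,\Z)$, $B_a=H^{a-a\sigma,0}_{C_2}(k,\Z)$ and $C_a=H^{a-(a-1)\sigma,0}_{C_2}(k,\Z)$. Taking $a=m$, $p=-n$ in the two isomorphisms recorded just before Proposition \ref{neg}, one has $H^{m-n\sigma,0}_{C_2}(k,\Z)\cong A_m$ for $m<n$, $=B_n$ for $m=n$, and $\cong C_m$ for $m>n$. Since $m>n\geq 3$ gives $m\geq 1$, Proposition \ref{neg} yields $C_m=0$ in the last range, while $A_m=0$ for $m\leq 1$. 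Hence the statement is equivalent to the two tables
\begin{equation*}
A_a=\begin{cases}\Z/2 & a\geq 3\text{ odd}\\ 0 & \text{otherwise}\end{cases}
\qquad
B_a=\begin{cases}\Z & a\geq 2\text{ even}\\ \Z/2 & a\geq 3\text{ odd,}\end{cases}
\end{equation*}
the even/odd split in $n$ being precisely whether the main-diagonal entry $B_n$ is a $\Z$ or a $\Z/2$.

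I would prove these tables by induction on $a\geq 2$, the base case $B_2=\Z$ being Proposition \ref{case2}. The engine is the five-term sequence
\begin{equation*}
0\to A_a\xrightarrow{i_*} B_a\xrightarrow{\delta_a}\Z\xrightarrow{p_*}B_{a+1}\to 0
\end{equation*}
displayed before Proposition \ref{neg} (using $C_{a+1}=0$), which gives $A_a=\ker\delta_a$ and $B_{a+1}=\Z/\operatorname{im}\delta_a$. Everything therefore reduces to the connecting map $\delta_a\colon B_a\to H^{0,0}(k)=\Z$, which is the restriction (forgetful) map attached to the cofiber sequence $C_{2+}\to S^0\to S^\sigma$.

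The heart of the argument is to show $\operatorname{im}\delta_a=2\Z$ when $a$ is even. First, the segment of \ref{connec} relating the bands $-a$ and $-(a-1)$ reads, at first index $a$,
\begin{equation*}
B_{a-1}\xrightarrow{\delta}\Z=H^{0,0}(k)\xrightarrow{\tau}B_a\xrightarrow{i_*}C_a,
\end{equation*}
and $C_a=0$ for $a\geq 1$, so the transfer $\tau\colon\Z\to B_a$ is surjective. For even $a$ the inductive hypothesis gives $B_a=\Z$, so $\tau$ is an isomorphism. Now Proposition \ref{comp} identifies the composite $\tau\circ\delta_a$ with multiplication by $2$ on $B_a$; since $\tau$ is invertible, $\delta_a=\pm2$, whence $A_a=0$ and $B_{a+1}=\Z/2$. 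When $a$ is odd and $\geq 3$ the inductive hypothesis gives $B_a=\Z/2$, so $\delta_a\colon\Z/2\to\Z$ must vanish; then $A_a=\Z/2$ and $p_*\colon\Z\to B_{a+1}$ is an isomorphism, giving $B_{a+1}=\Z$. Starting from $B_2=\Z$ the induction produces both tables, and feeding them back through the reduction of the first paragraph yields the stated answer, the single $\Z$ occurring only at $m=n$ with $n$ even.

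The step I expect to be the main obstacle is pinning $\delta_a$ to $\pm2$ rather than $\pm1$: the composite being multiplication by $2$ leaves both open, and $\pm1$ would wrongly force $B_{a+1}=0$. This is exactly what the surjectivity of the transfer $\tau$ (coming from $C_a=0$) resolves, since for even $a$ it makes $\tau$ invertible and so compels $\delta_a$ to carry the factor of $2$. The one point genuinely requiring care is the compatibility of maps, namely that the connecting map $\delta_a$ of the five-term sequence and the transfer $\tau$ really are the restriction/transfer pair to which Proposition \ref{comp} applies; this is precisely the Spanier--Whitehead duality built into the proof of that proposition.
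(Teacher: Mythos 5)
Your proposal is correct and is essentially the paper's own argument: the same stabilization isomorphisms reduce everything to the diagonal families, the same five-term exact sequence drives the computation, and the same combination of Proposition \ref{comp} with surjectivity of the transfer (coming from the vanishing of $H^{a-(a-1)\sigma,0}_{C_2}(k,\Z)$) pins the connecting map to multiplication by $2$ at the even steps and to zero at the odd steps. The only difference is organizational: you run a single induction along the diagonal index $a$, whereas the paper inducts column-by-column on $n$ (working out $n=3,4,5,6$ explicitly before invoking induction), but the content of each step is identical.
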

\begin{proof}
We compute the negative cone of weight 0 Bredon motivic cohomology by induction. 

 If $n=3$ we have a l.e.s
$$0\rightarrow H^{2-3\sigma,0}_{C_2}(k,\Z)\rightarrow H^{2-2\sigma,0}_{C_2}(k,\Z)\stackrel{f _1}{\rightarrow} H^{0,0}(k,\Z)\rightarrow H^{3-3\sigma,0}_{C_2}(k,\Z)\rightarrow H^{3-2\sigma,0}(k,\Z)=0$$
and because $$H^{1-\sigma,0} _{C _2}(k,\Z)=0\rightarrow H^{0,0}(k,\Z)=\Z\simeq H^{2-2\sigma,0}_{C_2}(k,\Z)=\Z\rightarrow H^{2-\sigma,0} _{C _2}(k,\Z)=0$$  we have, according to Proposition \ref{comp}, that $f _1$ is multiplication by 2. It implies $H^{3-3\sigma,0}_{C_2}(k,\Z)=\Z/2$ and $H^{m-3\sigma,0}_{C_2}(k,\Z)=0$ if $m\neq 3$. We also have the sequence 
$$0\rightarrow H^{3-4\sigma,0}_{C_2}(k,\Z)\rightarrow H^{3-3\sigma,0}_{C_2}(k,\Z)=\Z/2\stackrel{0}{\rightarrow} H^{0,0}(k,\Z)=\Z\rightarrow$$
$$\rightarrow H^{4-4\sigma,0}_{C_2}(k,\Z)\rightarrow H^{4-3\sigma,0}_{C_2}(k,\Z)=0$$
which implies that $H^{3-3\sigma,0}_{C_2}(k,\Z)=H^{3-4\sigma,0}_{C_2}(k,\Z)=H^{3-n\sigma,0}_{C_2}(k,\Z)=\Z/2$ for any $n\geq 3$ and $H^{m-3\sigma,0}_{C_2}(k,\Z)=H^{m-4\sigma,0}_{C_2}(k,\Z)=0$ for $m\leq 2$ and $m\geq 5$ and $H^{4-4\sigma}_{C_2}(k,\Z)=\Z$. In conclusion 
$$H^{n-4\sigma,0}_{C _2}(k, \Z) =
\begin{cases}
0 & n\leq 2, n\geq 5\\
\Z/2 & n=3 \\
\Z& n=4\\ 
\end{cases}.$$
We have 
$$0\rightarrow H^{4-5\sigma,0} _{C _2}(k,\Z)\rightarrow H^{4-4\sigma,0} _{C _2}(k,\Z)=\Z\stackrel{f _2}{\rightarrow} H^{0,0}(k,\Z)=\Z\rightarrow$$ 
$$\rightarrow H^{5-5\sigma,0} _{C _2}(k,\Z)\rightarrow H^{5-4\sigma,0} _{C _2}(k,\Z)=0\rightarrow$$ and that  
$H^{0,0}(k,\Z)=\Z\simeq H^{4-4\sigma,0} _{C _2}(k,\Z)=\Z$ is an isomorphism from the above l.e.s.. We conclude that $f _2$ is multiplication by 2 (according to Proposition \ref{comp}) so $H^{4-5\sigma,0} _{C _2}(k,\Z)=0$ and $H^{5-5\sigma} _{C _2}(k,\Z)=\Z/2$. It also implies that $H^{m-5\sigma,0} _{C _2}(k,\Z)=0$ for any $m\neq 3,5$ and $H^{3-5\sigma,0} _{C _2}(k,\Z)=H^{3-4\sigma,0} _{C _2}(k,\Z)=\Z/2$. 

We will compute the case $n=6$ and conclude by induction the general case. 

For $n=6$ we have that 
$$0\rightarrow H^{5-6\sigma,0} _{C _2}(k,\Z)\rightarrow H^{5-5\sigma,0} _{C _2}(k,\Z)=\Z/2\stackrel{0}{\rightarrow} H^{0,0}(k,\Z)=\Z\rightarrow$$
$$\rightarrow H^{6-6\sigma} _{C _2}(k,\Z)\rightarrow H^{6-5\sigma} _{C _2}(k,\Z)=0.$$  It implies that $H^{5-6\sigma,0}(k,\Z)\simeq H^{5-5\sigma,0}(k,\Z)=\Z/2$ and $H^{0,0}(k,\Z)=\Z\simeq H^{6-6\sigma}(k,\Z)$. 

We conclude that $H^{6-6\sigma}(k,\Z)=\Z$, $H^{5-6\sigma}(k,\Z)=\Z/2$, $H^{4-6\sigma}(k,\Z)=0$, $H^{3-6\sigma}(k,\Z)=\Z/2$ and $H^{m-6\sigma}(k,\Z)=0$ for $m\leq 2$ and $m\geq 7$. 

By induction we conclude that if $n=even\geq 4$ we have $H^{n-n\sigma}(k,\Z)=\Z$, $H^{n-1-n\sigma}(k,\Z)=\Z/2$,  $H^{n-2-n\sigma}(k,\Z)=0$, $H^{n-3-n\sigma}(k,\Z)=\Z/2$... $H^{2-n\sigma}(k,\Z)=0$ and $H^{m-n\sigma}(k,\Z)=0$ for any $m\leq 2$ or $m\geq n+1$. 

If $n=odd\geq 3$ then $H^{n-n\sigma}(k,\Z)=\Z/2$, $H^{n-1-n\sigma}(k,\Z)=0$, $H^{n-2-n\sigma}(k,\Z)=\Z/2$, ... , $H^{3-n\sigma}(k,\Z)=\Z/2$ and $H^{m-n\sigma}(k,\Z)=0$ if $m\leq 2$ or $m\geq n+1$.
\end{proof}
The proof of the above proposition computes completely the negative cone of Bredon motivic cohomology in weight 0 with integer coefficients. 

The positive cone of Bredon motivic cohomology in weight 0 with integer coefficients is completely computed in the next corollary:
\begin{corollary} \label{c33} If $n=even\geq 2$ then 
$$H^{m+n\sigma,0}_{C _2}(k, \Z) =
\begin{cases}
\Z/2 & -n< m\leq 0, m=even \\
\Z& m=-n\\ 
0& otherwise\\
\end{cases}.$$
% If $n=1$ then $H^{\sigma,0}_{C _2}(k, \Z)=\Z/2$ and $H^{m+\sigma,0}_{C _2}(k, \Z)=0$ for any $m\neq 0$. 

If $n=odd\geq 1$ then $$H^{m+n\sigma,0}_{C _2}(k, \Z) =
\begin{cases}
\Z/2 & -n< m\leq 0, m=even \\ 
0& otherwise\\
\end{cases}.$$
\end{corollary}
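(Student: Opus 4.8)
The plan is to induct on the $\sigma$-power $n$, raising it by one at each step and starting from the cases $n=1,2$ already settled in Proposition \ref{case2}. The engine is the long exact sequence \eqref{connec} attached to the cofiber sequence $C_{2+}\to S^0\to S^\sigma$, which in weight $0$ reads
$$\cdots\to H^{m+n,0}(k,\Z)\xrightarrow{p_*}H^{m+n\sigma,0}_{C_2}(k,\Z)\xrightarrow{i_*}H^{m+(n+1)\sigma,0}_{C_2}(k,\Z)\xrightarrow{\delta}H^{m+n+1,0}(k,\Z)\to\cdots,$$
whose outer terms are ordinary motivic cohomology of $k$ in weight $0$, hence equal to $\Z$ when the degree is $0$ and zero otherwise. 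First I would note that whenever $m\neq -n$ and $m\neq -n-1$ the two motivic groups flanking $i_*$ both vanish, so $i_*\colon H^{m+n\sigma,0}_{C_2}(k,\Z)\xrightarrow{\cong}H^{m+(n+1)\sigma,0}_{C_2}(k,\Z)$; this transports every group already known at level $n$ (the interior $\Z/2$'s at even $m\leq 0$ and the zeros) unchanged to level $n+1$.

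All of the action therefore concentrates at the tip $m\in\{-n,-n-1\}$, the only place where the single copy $H^{0,0}(k,\Z)=\Z$ enters. Splicing the two relevant segments and using that $H^{-n-1+n\sigma,0}_{C_2}(k,\Z)=0$ by the inductive hypothesis (it lies below the diagonal at level $n$), I obtain the five-term exact sequence
$$0\to H^{-(n+1)+(n+1)\sigma,0}_{C_2}(k,\Z)\xrightarrow{\delta}\Z\xrightarrow{p_*}H^{-n+n\sigma,0}_{C_2}(k,\Z)\xrightarrow{i_*}H^{-n+(n+1)\sigma,0}_{C_2}(k,\Z)\to 0.$$
By the inductive hypothesis the diagonal group $H^{-n+n\sigma,0}_{C_2}(k,\Z)$ equals $\Z$ for $n$ even and $0$ for $n$ odd. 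If $n$ is odd then $p_*$ maps into $0$, so $\delta$ is an isomorphism and a fresh top class $H^{-(n+1)+(n+1)\sigma,0}_{C_2}(k,\Z)=\Z$ appears on the diagonal at the even level $n+1$, while $H^{-n+(n+1)\sigma,0}_{C_2}(k,\Z)=0$. If $n$ is even then $\Z\xrightarrow{p_*}\Z$ is multiplication by $2$: its kernel is trivial, forcing $H^{-(n+1)+(n+1)\sigma,0}_{C_2}(k,\Z)=0$, and its cokernel $\Z/2$ surfaces as $H^{-n+(n+1)\sigma,0}_{C_2}(k,\Z)$, a new interior class at the even index $m=-n$. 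Matching these alternating outcomes against the asserted formula in both parities completes the induction.

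The heart of the matter, and the step I expect to be the main obstacle, is the identification of that map $\Z\xrightarrow{p_*}\Z$ with multiplication by $2$; this is exactly the role played by Proposition \ref{comp} in the preceding proposition on the negative cone. The point is that $p_*$ is not a composite inside a single segment, so one instead pairs it with the connecting map $\delta'\colon H^{-n+n\sigma,0}_{C_2}(k,\Z)\to H^{0,0}(k,\Z)$ coming from the sequence one level down, whose round trip $p_*\circ\delta'$ is multiplication by $2$ by Proposition \ref{comp}. Since $\delta'$ was shown to be an isomorphism at the previous odd-to-even step, being precisely the map that created the diagonal $\Z$, it follows that $p_*$ itself is multiplication by $2$. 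Carrying this bookkeeping faithfully through the induction is the only delicate part; the propagation isomorphisms and the vanishing of weight-$0$ motivic cohomology off degree $0$ are routine, and as a consistency check the reduction isomorphisms $H^{m+n\sigma,0}_{C_2}(k,\Z)\cong H^{m-(m-1)\sigma,0}_{C_2}(k,\Z)$ for $n\geq -m+1$ recover all off-tip entries from the tip values.
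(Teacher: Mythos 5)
Your proposal is correct and is essentially the paper's own proof: the paper runs the long exact sequence \eqref{connec} explicitly for $n=3,4,5$ (anchored in Proposition \ref{case2}) and then invokes induction, identifying the map $\Z\to\Z$ at the tip as multiplication by $2$ in exactly your way, namely by composing with the connecting isomorphism $\delta'$ produced at the previous step and applying Proposition \ref{comp}. Your write-up merely packages the same argument as a clean uniform inductive step (propagation isomorphisms away from $m\in\{-n,-n-1\}$ plus the five-term sequence at the tip), with the only cosmetic caveat that the isomorphism $\delta'$ at the first even level $n=2$ should be checked directly from the level-$1$ values of Proposition \ref{case2}, as the paper does.
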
 
\begin{proof} If $n=3$ we have 
$$0\rightarrow H^{-3+2\sigma,0} _{C _2}(k,\Z)=0\rightarrow H^{-3+3\sigma,0} _{C _2}(k,\Z)\rightarrow $$
$$\rightarrow H^{0,0}(k,\Z)=\Z\stackrel{g _1}{ \rightarrow} H^{-2+2\sigma,0} _{C _2}(k,\Z)=\Z \rightarrow H^{-2+3\sigma,0}_{C _2}(k,\Z)\rightarrow 0.$$
We have that $$0=H^{-2+\sigma}(k,\Z)\rightarrow H^{-2+2\sigma,0} _{C _2}(k,\Z)=\Z\simeq H^{0,0}(k,\Z)=\Z\rightarrow H^{-1+\sigma}(k,\Z)=0$$ so we conclude that $g_1$ is multiplication by 2. It implies that $H^{-2+3\sigma,0}_{C _2}(k,\Z)=\Z/2$ and $H^{-3+3\sigma,0} _{C _2}(k,\Z)=0$. We also have  $H^{3\sigma,0}_{C _2}(k,\Z)=H^{2\sigma,0}_{C _2}(k,\Z)=\Z/2$ (Proposition \ref{case2}) and  $H^{m+3\sigma,0}_{C _2}(k,\Z)=0$ if $m\neq -2,0$. When $n=4$ we have 
$$0\rightarrow H^{-4+3\sigma,0} _{C _2}(k,\Z)=0\rightarrow H^{-4+4\sigma,0} _{C _2}(k,\Z)\rightarrow $$
$$\rightarrow H^{0,0}(k,\Z)=\Z \rightarrow H^{-3+3\sigma,0} _{C _2}(k,\Z)=0\rightarrow H^{-3+4\sigma,0}_{C _2}(k,\Z)\rightarrow 0,$$
which implies that $H^{-4+4\sigma,0}_{C _2}(k,\Z)=\Z$, $H^{-3+4\sigma,0}_{C _2}(k,\Z)=0$,  $H^{-2+4\sigma,0}_{C _2}(k,\Z)=H^{-2+3\sigma,0}_{C _2}(k,\Z)=\Z/2$, $H^{-1+4\sigma,0}_{C _2}(k,\Z)=0$ and $H^{4\sigma,0}_{C _2}(k,\Z)=H^{3\sigma,0}_{C _2}(k,\Z)=\Z/2$ and the rest of the groups are zero. In conclusion
$$H^{m+4\sigma,0}_{C _2}(k, \Z) =
\begin{cases}
\Z& m=-4\\
\Z/2 & -2\leq m\leq 0, m=even \\ 
0& otherwise\\
\end{cases}.$$
In the case of $n=5$ we have the following 
$$0\rightarrow H^{-5+4\sigma,0} _{C _2}(k,\Z)=0\rightarrow H^{-5+5\sigma,0} _{C _2}(k,\Z)\rightarrow $$
$$\rightarrow H^{0,0}(k,\Z)=\Z \stackrel{g _2}{\rightarrow} H^{-4+4\sigma,0} _{C _2}(k,\Z)=\Z\rightarrow H^{-4+5\sigma,0}_{C _2}(k,\Z)\rightarrow 0.$$
The map $g _2$ is multiplication by $2$ because
$$0=H^{-4+3\sigma,0} _{C _2}(k,\Z)\rightarrow H^{-4+4\sigma,0} _{C _2}(k,\Z)\simeq H^{0,0}(k,\Z)\rightarrow H^{-3+3\sigma} _{C _2}(k,\Z)=0.$$ In conclusion $H^{-5+5\sigma,0} _{C _2}(k,\Z)=0$, $H^{-4+5\sigma,0}_{C _2}(k,\Z)=\Z/2$ and $H^{-3+5\sigma,0}_{C _2}(k,\Z)=H^{-3+4\sigma,0} _{C _2}(k,\Z)=0$, $H^{-2+5\sigma,0}_{C _2}(k,\Z)=H^{-2+4\sigma,0} _{C _2}(k,\Z)=\Z/2$, $H^{-1+5\sigma,0}_{C _2}(k,\Z)=0$ and $H^{5\sigma,0}_{C _2}(k,\Z)=\Z/2$ and the rest are zero. Now by induction we conclude the rest of the cases.
\end{proof}
From the discussion above we also conclude:
\begin{corollary} \label{zero}% We have that $H^{a-a\sigma,0} _{C _2}(k,\Z)=\Z$ for  $a=0$, $H^{a-a\sigma,0}_{C _2}(k,\Z)=0$ for $a=-1$ or $a=1$ and  
We have 
$$
H^{a-a\sigma,0}_{C _2}(k, \Z) =
\begin{cases}
\Z & a= \text{even}\\
0 & a=\text{odd}, a\leq 1 \\
\Z/2& a=\text{odd}, a>1\\ 
\end{cases}.
$$
We also have $H^{a-a\sigma,0}_{C _2}(k,\Z/2)=\Z/2$ for any $a\leq 0$, $a\geq 2$ and zero for $a=1$.
\end{corollary}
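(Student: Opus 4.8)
The plan is to read off the integer statement directly from the cone computations above and then to obtain the $\Z/2$ statement from it by the universal coefficient theorem.

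First I would handle integer coefficients. The group $H^{a-a\sigma,0}_{C_2}(k,\Z)$ is precisely the diagonal value of the two cones: for $a>0$ it is the entry $m=n=a$ of the negative cone proposition above, which is $\Z$ when $a$ is even $\geq 4$ and $\Z/2$ when $a$ is odd $\geq 3$; the small cases come from Proposition \ref{case2} ($\Z$ for $a=2$, $0$ for $a=1$) together with the base value $H^{0,0}_{C_2}(k,\Z)=\Z$. For $a<0$ it is the entry $m=a$, $n=-a$ of Corollary \ref{c33}: when $a$ is even one has $m=-n$, giving $\Z$, whereas when $a$ is odd the index $m=-n$ is odd, so the condition defining the $\Z/2$-summands fails and the group is $0$. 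Assembling these yields $\Z$ for $a$ even, $0$ for $a$ odd with $a\leq 1$, and $\Z/2$ for $a$ odd with $a>1$, which is the first claim.

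For $\Z/2$-coefficients I would use the identification $H^{a-a\sigma,0}_{C_2}(k,A)=H^a(Z_{top}(-a\sigma)(k)\otimes A)$, valid for any coefficient group $A$ (Proposition \ref{case2}). Because each term of $Z_{top}(-a\sigma)(k)$ is free abelian, the universal coefficient theorem supplies a short exact sequence
$$0\to H^{a-a\sigma,0}_{C_2}(k,\Z)\otimes\Z/2\to H^{a-a\sigma,0}_{C_2}(k,\Z/2)\to {}_2H^{(a+1)-a\sigma,0}_{C_2}(k,\Z)\to 0,$$
both of whose outer terms are diagonal or first off-diagonal integer groups already computed. When $a$ is even (of either sign) or odd $\geq 3$, the left term is $\Z\otimes\Z/2=\Z/2$ or $\Z/2\otimes\Z/2=\Z/2$ while the right term vanishes, because the off-diagonal index $(a+1)-a\sigma$ lands where the corresponding cone is $0$; hence the group is $\Z/2$. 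For $a=1$ the complex $Z_{top}(-\sigma)(k)$ is the acyclic complex $\Z\xrightarrow{\mathrm{id}}\Z$, so both relevant integer groups vanish and the answer is $0$.

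The step I expect to be the main obstacle, and the only genuinely nontrivial case, is $a$ odd with $a<0$: here the diagonal integer group $H^{a-a\sigma,0}_{C_2}(k,\Z)$ is $0$, so the $\Z/2$-group is produced entirely by the Bockstein term. I would check via Corollary \ref{c33} that the off-diagonal group $H^{(a+1)-a\sigma,0}_{C_2}(k,\Z)$ equals $\Z/2$ (its index $m=a+1$ is even and satisfies $-n<m\leq 0$ with $n=-a$), so its $2$-torsion contributes $\Z/2$ to the middle term and the group is again $\Z/2$. Care is needed with the direction of the universal coefficient sequence---the $\mathrm{Tor}$ contribution is governed by the \emph{next} cohomological degree---so that the correct off-diagonal group enters; once this is pinned down, the uniform value $\Z/2$ for all $a\leq 0$ and $a\geq 2$, with $0$ at $a=1$, follows.
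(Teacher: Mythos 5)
Your proof is correct and takes essentially the same route as the paper: there the corollary is stated as an immediate consequence of the preceding computations ("From the discussion above we also conclude"), which is exactly your reading-off of the diagonal entries from the negative-cone proposition, Proposition \ref{case2}, and Corollary \ref{c33}. Your universal-coefficient argument for the $\Z/2$ statement, with the Tor term taken in the next cohomological degree and the one genuinely nontrivial case (odd $a<0$, where the whole group comes from ${}_2H^{(a+1)-a\sigma,0}_{C_2}(k,\Z)=\Z/2$) handled correctly, is the same split sequence the paper itself uses elsewhere (e.g.\ in Propositions \ref{sigma} and \ref{prop}).
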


The above theorems conclude that Bredon motivic cohomology of a field with integer coefficients in weight $0$ coincide (as abstract groups) with Bredon cohomology of a point with integer coefficients. We also have the following theorem :
\begin{theorem} \label{compZ} Let $k$ be an arbitrary field. Then with $\Z/2-$coefficients we have that 
$$H^{a+p\sigma,0} _{C _2}(k,\Z/2)=
\begin{cases}
\Z/2& 1<a\leq -p\\
\Z/2 & 0\leq -a\leq p \\ 
0& otherwise\\
\end{cases}.$$
\end{theorem}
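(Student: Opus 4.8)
The plan is to obtain the $\Z/2$-computation as a formal consequence of the integer computation already completed in this section, using the Bockstein (universal coefficient) long exact sequence attached to the coefficient sequence $0\to\Z\xrightarrow{2}\Z\to\Z/2\to 0$. This short exact sequence induces a cofiber sequence of Bredon motivic cohomology spectra $\M{\Z}\xrightarrow{2}\M{\Z}\to\M{\Z/2}$ in $SH_{C_2}$. Applying $[\Sigma^\infty_{T^{k[G]}}(\spec k)_+,\,S^{a+p\sigma,0}\wedge -]_{SH_{C_2}}$ of Definition \ref{defB}, and using that for weight $0$ one has $S^{a+p\sigma,0}=S^a\wedge S^{p\sigma}$, so that smashing with $S^1$ sends $S^{a+p\sigma,0}$ to $S^{a+1+p\sigma,0}$, I would produce the long exact sequence
\[
\cdots\to H^{a+p\sigma,0}_{C_2}(k,\Z)\xrightarrow{\cdot 2}H^{a+p\sigma,0}_{C_2}(k,\Z)\to H^{a+p\sigma,0}_{C_2}(k,\Z/2)\xrightarrow{\beta} H^{a+1+p\sigma,0}_{C_2}(k,\Z)\xrightarrow{\cdot 2}\cdots
\]
which breaks into the short exact sequences
\[
0\to H^{a+p\sigma,0}_{C_2}(k,\Z)\otimes\Z/2\to H^{a+p\sigma,0}_{C_2}(k,\Z/2)\to {}_2H^{a+1+p\sigma,0}_{C_2}(k,\Z)\to 0 .
\]

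Next I would feed in the integer groups, which are fully known: for $p=0$ Proposition \ref{triv} gives $H^{a,0}_{C_2}(k,\Z)=H^{a,0}(k,\Z)$ (namely $\Z$ for $a=0$ and $0$ otherwise), for the positive cone $p>0$ one uses Corollary \ref{c33} (with Proposition \ref{case2} for $p=1,2$), and for the negative cone $p<0$ one uses the negative-cone computation preceding Corollary \ref{c33} (again with Proposition \ref{case2} for $p=-1,-2$). In every bidegree these groups are one of $\Z$, $\Z/2$, $0$, so the two outer terms of the short exact sequence are evaluated by the elementary rules $\Z\otimes\Z/2=\Z/2$, ${}_2\Z=0$, $(\Z/2)\otimes\Z/2=\Z/2={}_2(\Z/2)$, and $0\otimes\Z/2=0={}_2 0$; in particular the extension is always split as abelian groups, which is all that is claimed.

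The remaining work is the bookkeeping across the four parity/sign regimes, which I would illustrate on the positive cone with $p$ even: there $H^{a+p\sigma,0}_{C_2}(k,\Z)$ is $\Z$ at $a=-p$, $\Z/2$ at the even $a$ with $-p<a\le 0$, and $0$ elsewhere. Each nonzero integer group contributes a $\Z/2$ at its own index through the $\otimes\Z/2$ term, while each integer copy of $\Z/2$ contributes a further $\Z/2$ at the index one below through its $2$-torsion (the copy of $\Z$ at $a=-p$ has no $2$-torsion and so contributes nothing below); these two families interleave to place a single $\Z/2$ at every $a$ with $0\le -a\le p$ and nothing outside. The same count on the negative cone fills the band $1<a\le -p$, and the analogous odd-$p$ cases run identically. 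I expect the only genuinely delicate point to be convention-matching: one must confirm that the coefficient map realizes $\M{\Z/2}$ as the cofiber of multiplication by $2$ on $\M{\Z}$, and that the Bockstein connecting map raises the first index $a$ by one rather than the $\sigma$-index $p$; once this is pinned down, the claimed support together with the vanishing in the complementary region is immediate because outside a bounded band in $a$ both integer groups vanish. The resulting formula is exactly $H^{a+p\sigma}_{Br}(pt,\ul{\Z/2})$ recalled in Section 2, giving the asserted abstract isomorphism.
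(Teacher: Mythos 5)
Your proposal is correct and is essentially the paper's own (largely implicit) argument: the paper deduces Theorem \ref{compZ} from the integral weight-$0$ computations (Proposition \ref{case2}, the negative-cone proposition, Corollary \ref{c33}) via the split universal coefficient sequence $0\to H^{a+p\sigma,0}_{C_2}(k,\Z)\otimes\Z/2\to H^{a+p\sigma,0}_{C_2}(k,\Z/2)\to {}_2H^{a+1+p\sigma,0}_{C_2}(k,\Z)\to 0$, exactly as it does explicitly for weight $\sigma$ in Corollary \ref{sif}. Note that your appeal to splitness is in fact never needed, since (as your ``interleaving'' observation shows) in each bidegree at most one of the two outer terms is nonzero, so the middle group is determined by exactness alone.
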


 \section{Bredon motivic cohomology of a field in weights 1 and $\sigma$}
 In this section we assume that $k$ is a field of characteristic zero.
 
We have the following Bredon motivic cohomology of $\EGt C _2$ in weight 1.
\begin{proposition} \label{case1} We have $\rH^{a+p\sigma,1}_{C_2}(\EGt C _2,\Z)=0$ if $a\neq 3$ and $\rH^{3+p\sigma,1}_{C_2}(\EGt C _2,\Z)=\Z/2$.
\end{proposition}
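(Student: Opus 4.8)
The plan is to feed the weight-$1$ data into the motivic isotropy cofiber sequence $\EG C_2{}_+\to S^0\to \EGt C_2$, exactly as in the proof of Proposition~\ref{case0}, the difference being that in weight $1$ a single $\Z/2$ will survive. First I would use the periodicities $(\sigma-1,0)$ and $(\sigma-1,\sigma-1)$ of $\EGt C_2$ (Proposition~2.9 \cite{HOV1}) to reduce the computation to the case $p=0$, so that it suffices to determine the groups $\rH^{a,1}_{C_2}(\EGt C_2,\Z)$ for integers $a$; the statement for general $p$ then follows by periodicity. The associated long exact sequence in weight $1$ reads
$$\cdots\to \rH^{a,1}_{C_2}(\EGt C_2)\to H^{a,1}_{C_2}(k)\xrightarrow{\;f_a\;} H^{a,1}_{C_2}(\EG C_2)\xrightarrow{\;\partial\;} \rH^{a+1,1}_{C_2}(\EGt C_2)\to\cdots,$$
so everything reduces to identifying the two outer groups and the map $f_a$ between them.

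For the middle term I would apply the trivial-action identification (Proposition~\ref{triv} with $X=\spec k$, or directly \cite{HOV1}): $H^{a,1}_{C_2}(k,\Z)=H^{a,1}(k,\Z)$, which is $k^*$ for $a=1$ and $0$ otherwise. For the right-hand term I would use that $\EG C_2$ has free $C_2$-action, so $H^{a,1}_{C_2}(\EG C_2,\Z)=H^{a,1}(\BG C_2,\Z)$ \cite{HOV1}, the ordinary motivic cohomology of $\BG C_2=B\mu_2$ in weight $1$. Writing $\BG C_2=\colim_n(\A(n\sigma)\setminus\{0\})/C_2$, each stage is the complement of the zero section of $\mathcal{O}(-2)$ over $\P^{n-1}$ (the quotient of the tautological $\G_m$-torsor by $\mu_2$). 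The Gysin/localization sequence then computes $H^{*,*}(\BG C_2,\Z)$ from multiplication by the Euler class $c_1(\mathcal{O}(\pm2))=\pm 2h$ on $H^{*,*}(\P^\infty,\Z)=H^{*,*}(k,\Z)[h]$ with $|h|=(2,1)$. In weight $1$ this collapses to the single map $\Z=H^{0,0}(\P^\infty)\xrightarrow{\;\cdot 2\;}H^{2,1}(\P^\infty)=\Z$, giving $H^{1,1}(\BG C_2)=k^*$, $H^{2,1}(\BG C_2)=\Z/2$, and $0$ in all remaining degrees.

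Finally I would assemble the long exact sequence. The map $f_1\colon H^{1,1}(k)=k^*\to H^{1,1}(\BG C_2)=k^*$ is the pullback along $\BG C_2\to\spec k$ and is an isomorphism, while every other $f_a$ has vanishing source or target. Hence the only surviving class is the generator of $H^{2,1}(\BG C_2)=\Z/2$: it is not in the image of $f_2$ (because $H^{2,1}(k)=0$) and it is carried isomorphically under $\partial$ into $\rH^{3,1}_{C_2}(\EGt C_2)$. This yields $\rH^{3,1}_{C_2}(\EGt C_2,\Z)=\Z/2$ and $\rH^{a,1}_{C_2}(\EGt C_2,\Z)=0$ for $a\neq 3$, which is the claim.

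The step I expect to be the main obstacle is the integral weight-$1$ computation of $H^{*,1}(\BG C_2,\Z)$, and in particular verifying that the relevant Euler class acts as multiplication by $2$ rather than by $h$ alone: this factor of $2$ is precisely what produces the torsion group $\Z/2$ and hence the nonvanishing in cohomological degree $3$. The remaining points---the identification of the two outer groups, the fact that $f_1$ is an isomorphism so that the $k^*$ contributions cancel, and the bookkeeping in the long exact sequence---are then routine.
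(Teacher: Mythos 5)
Your proof is correct and follows essentially the same route as the paper: the motivic isotropy sequence in weight $1$, the identifications $H^{a,1}_{C_2}(k,\Z)=H^{a,1}(k,\Z)$ and $H^{a,1}_{C_2}(\EG C_2,\Z)=H^{a,1}(\BG C_2,\Z)$, the observation that the pullback $k^*\to H^{1,1}(\BG C_2,\Z)=k^*$ is an isomorphism, and the periodicities of $\EGt C_2$ to pass to general $p$. The only divergence is in one sub-step: where the paper cites Totaro (Corollary 3.5 of \cite{T}) for $H^{2,1}(\BG C_2,\Z)=\Z/2$, you derive the full weight-$1$ computation of $H^{*,1}(\BG C_2,\Z)$ from the Gysin sequence for $\mathcal{O}(-2)$ over $\P^{\infty}$ with Euler class $\pm 2h$, which is a correct, self-contained substitute for that citation.
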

\begin{proof} We have that the motivic isotropy sequence gives a l.e.s
$$0\rightarrow \rH^{1,1} _{C _2}(\EGt C _2)\rightarrow H^{1,1}_{C_2}(k,\Z)=k^*\stackrel{\alpha}{\rightarrow} H^{1,1}_{C_2}(\EG C _2,\Z)=k^*\rightarrow \rH^{2,1}_{C_2}(\EGt C _2,\Z)\rightarrow$$
$$\rightarrow H^{2,1} _{C_2}(k,\Z)=0\rightarrow H^{2,1}_ {C_2}(\EG C _2,\Z)\rightarrow \rH^{3,1}_{C_2}(\EGt C _2,\Z)\rightarrow H^{3,1}_{C_2}(k,\Z)=0.$$
The map $\alpha:k^*=O^*(k)\rightarrow k^*=O^*(\BG C _2)$ is the identity and  $H^{2,1}_{C_2}(\EG C _2,\Z)=H^{2,1}(\BG C _2,\Z)=\Z/2$ (Corollary 3.5 \cite{T}). It implies that $\rH^{m,1} _{C_2}(\EGt C _2,\Z)=0$ if $m\neq 3$ and $\rH^{3,1}_{C_2}(\EGt C _2,\Z)=\Z/2$. The general result follows from the periodicities of $\EGt C _2$. 
\end{proof}
\begin{proposition}\label{2q} a) $H^{2\sigma,\sigma} _{C _2}(k,\Z)=\Z/2$, $H^{2\sigma-1,\sigma}_{C _2}(k,\Z)= k^*$,  $H^{2\sigma+n,\sigma}_{C _2}(k,\Z)=0$ for any $n\neq 0,-1$. 
 
 b) $H^{2\sigma,\sigma}_{C _2}(k,\Z/2)=\Z/2$,  $H^{2\sigma-1,\sigma}_{C _2}(k,\Z/2)=\Z/2\oplus k^*/k^{*2}$, $H^{2\sigma-2,\sigma}_{C _2}(k,\Z/2)=\Z/2$ and $H^{2\sigma+n,\sigma}_{C _2}(k,\Z/2)=0$ for any $n\neq 0,-1,-2$.
 \end{proposition}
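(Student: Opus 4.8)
The plan is to deduce both parts directly from the decomposition theorems already available, reading off the answer from the motivic cohomology of $k$ in weights $0$ and $1$. The decisive observation is that the cohomology index $n+2\sigma$ has its $\sigma$-coefficient equal to twice the $\sigma$-coefficient of the weight index $\sigma$; this places us exactly in the balanced range $H^{a+2q\sigma,b+q\sigma}_{C_2}$ of Propositions \ref{Nie} and \ref{fin}, with $a=n$, $b=0$ and $q=1$, where those propositions express the group as an explicit finite direct sum.

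For part a) I would specialize Proposition \ref{Nie} to $b=0$, $q=1$, which gives
$$H^{n+2\sigma,\sigma}_{C_2}(k,\Z)=H^{n,0}(k,\Z/2)\oplus H^{n+2,1}(k,\Z).$$
It then remains to insert the motivic cohomology of a field: $H^{n,0}(k,\Z/2)=\Z/2$ for $n=0$ and $0$ otherwise, while weight-one integral motivic cohomology is concentrated in cohomological degree $1$ with $H^{1,1}(k,\Z)=K^M_1(k)=k^*$, so that $H^{n+2,1}(k,\Z)=k^*$ precisely when $n=-1$ and vanishes otherwise. Matching the two summands degree by degree produces $\Z/2$ at $n=0$, $k^*$ at $n=-1$, and $0$ for every other $n$, which is exactly the statement of a).

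For part b) the identical strategy applies with Proposition \ref{fin} in place of Proposition \ref{Nie}. Taking $b=0$, $q=1$ yields
$$H^{n+2\sigma,\sigma}_{C_2}(k,\Z/2)=H^{n,0}(k,\Z/2)\oplus H^{n+1,0}(k,\Z/2)\oplus H^{n+2,1}(k,\Z/2).$$
Here I would use the weight-one mod-$2$ groups of a field, namely $H^{0,1}(k,\Z/2)=\mu_2(k)$ and $H^{1,1}(k,\Z/2)=k^*/k^{*2}$, with vanishing in the remaining degrees, together with $\mu_2(k)=\Z/2$ since $\ch k=0$. Reading off the three summands then gives $\Z/2$ at $n=0$, then $\Z/2\oplus k^*/k^{*2}$ at $n=-1$ (the $\Z/2$ coming from $H^{n+1,0}=H^{0,0}$ and the $k^*/k^{*2}$ from $H^{n+2,1}=H^{1,1}$), then $\Z/2$ at $n=-2$ (from $H^{n+2,1}=H^{0,1}=\mu_2(k)$), and $0$ in all other degrees, as claimed.

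Alternatively, one can run the argument straight through Theorem \ref{id}: since $\Z^{SF}(\sigma)[2\sigma]=C_*z_{equi}(\A(\sigma),0)^{C_2}$, the group $H^{n+2\sigma,\sigma}_{C_2}(k,\Z)$ is the degree-$n$ hypercohomology of $\spec k$ in this complex, which by the $n=1$ instance of Nie's splitting recorded after Proposition \ref{fin} equals $\Z/2\oplus O^*[1]$ integrally and, after evaluating the mod-$2$ version at $k$, becomes $\Z/2\oplus\Z/2[1]\oplus k^*/k^{*2}[1]\oplus\Z/2[2]$; taking cohomology of these two complexes reproduces a) and b). Since everything reduces to the cited decompositions and standard weight-$0$ and weight-$1$ computations, I do not expect a genuine obstacle here; the only point demanding care is the degree bookkeeping, in particular keeping track of the fact that the value $n=-1$ collects contributions from two different summands in the $\Z/2$ case.
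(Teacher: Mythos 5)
Your proposal is correct and follows essentially the same route as the paper: the paper's own proof also rests on Theorem \ref{id} together with Nie's splitting for $n=1$ (it evaluates $C_*z_{equi}(\A(\sigma),0)^{C_2}(k)\simeq \Z/2\oplus k^*[1]$ and its mod-$2$ reduction and reads off the cohomology), which is exactly your "alternative" paragraph, and your primary route via the group-level specializations of Propositions \ref{Nie} and \ref{fin} at $b=0$, $q=1$ is just the same computation packaged one step later. The bookkeeping in all degrees, including the two contributions at $n=-1$ mod $2$, checks out.
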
 
 \begin{proof} From Proposition \ref{Nie} we have $$H^{2\sigma,\sigma}_{C _2}(k,\Z)=H^0(z _{equi}(A(\sigma),0)^{C_2}(k))=H^0(\Z/2[0]\oplus k^*[1])=\Z/2.$$ With $\Z/2$-coefficients we have
 $$H^{2\sigma,\sigma}_{C _2}(k,\Z/2)=H^0(z _{equi}(A(\sigma),0)^{C _2}(k)\otimes\Z/2)=H^0((\Z/2[0]\oplus k^*[1])\otimes \Z/2)=$$$$=H^0(\Z/2[0]\oplus \Z/2[1]\oplus k^*[1]\otimes \Z/2)=
 H^0(\Z/2[0]\oplus \Z/2[1]\oplus \Z/2[2]\oplus k^* /k^{*2}[1])=\Z/2.$$ 
 \end{proof}
 
  \begin{proposition} \label{sigma} a) We have $H^{\sigma,\sigma}_{C _2}(k,\Z)=\Z/2$ and $H^{\sigma+n,\sigma}_{C _2}(k,\Z)=0$ for any $n\neq 0$. With $\Z/2-$coefficients we have 
$H^{\sigma,\sigma}_{C _2}(k,\Z/2)=\Z/2$ and $H^{\sigma-1,\sigma}_{C _2}(k,\Z/2)=\Z/2$ and $H^{\sigma+n,\sigma}_{C _2}(k,\Z/2)=0$ for any $n\neq 0,-1$.

b) $H^{1,\sigma}_{C _2}(k,\Z)=k^{*2}$ and $H^{n,\sigma}_{C _2}(k,\Z)=0$ for any $n\neq 1$; 

With $\Z/2-$coefficients we have
\begin{equation}
H^{0,\sigma}_{C _2}(k,\Z/2)= \left\{
\begin{array}{ll}
      \Z/2 &  \sqrt{-1}\neq\emptyset\\
      0 & otherwise\\
\end{array} 
\right. 
\end{equation}
and $H^{1,\sigma}_{C _2}(k,\Z/2)=k^{*2}/k^{*4}$. 

In the case $\sqrt{-1}\neq \emptyset$ we have that $k^*/k^{*2}\stackrel{\times 2}{\simeq} k^{*2}/k^{*4}=H^{1,\sigma}_{C _2}(k,\Z/2)$. Also $H^{n,\sigma}_{C _2}(k,\Z/2)=0$ for $n\neq 0,1$.
\end{proposition}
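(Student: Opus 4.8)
The plan is to obtain both the $\sigma$-level groups of part a) (cohomological index $n+\sigma$) and the $0$-level groups of part b) (index $n$) by descending, via the cofiber sequence $C_{2+}\to S^0\to S^\sigma$, from the $2\sigma$-level, where everything is already known: Proposition~\ref{2q} (itself built on Proposition~\ref{Nie} and the splitting $C_*z_{equi}(\A(\sigma),0)^{C_2}\simeq \Z/2\oplus O^*[1]$) gives $H^{2\sigma,\sigma}_{C_2}(k)=\Z/2$, $H^{-1+2\sigma,\sigma}_{C_2}(k)=k^*$, and $0$ elsewhere. The only extra inputs I need are the weight-one motivic cohomology of $k$, namely $H^{1,1}(k,\Z)=k^*$ with $H^{n,1}(k,\Z)=0$ for $n\neq 1$ (and $H^{0,1}(k,\Z/2)=\Z/2$, $H^{1,1}(k,\Z/2)=k^*/k^{*2}$, zero otherwise), together with the multiplicativity of Proposition~\ref{comp}. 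Everything is then driven by the long exact sequence \eqref{connec} read in weight $\sigma$, so that the interspersed motivic groups sit in weight $1$.

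For part a) I would run \eqref{connec} with $p=1$:
$$\cdots\to H^{a-1+2\sigma,\sigma}_{C_2}(k)\xrightarrow{\delta}H^{a+1,1}(k)\xrightarrow{p_*}H^{a+\sigma,\sigma}_{C_2}(k)\xrightarrow{i_*}H^{a+2\sigma,\sigma}_{C_2}(k)\xrightarrow{\delta}H^{a+2,1}(k)\to\cdots.$$
For every $a\neq 0,-1$ both neighbours $H^{a+1,1}(k)$ and $H^{a+2\sigma,\sigma}_{C_2}(k)$ vanish, forcing $H^{a+\sigma,\sigma}_{C_2}(k)=0$. At $a=0$ the sequence reads $k^*\xrightarrow{\delta_1}k^*\xrightarrow{p_*}H^{\sigma,\sigma}_{C_2}(k)\xrightarrow{i_*}\Z/2\to 0$, and the preceding segment exhibits $H^{-1+\sigma,\sigma}_{C_2}(k)=\ker(\delta_1)$. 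Thus the whole $\sigma$-level is controlled by the single connecting map $\delta_1\colon H^{-1+2\sigma,\sigma}_{C_2}(k)=k^*\to H^{1,1}(k)=k^*$: once I show $\delta_1$ is an isomorphism I get $H^{-1+\sigma,\sigma}_{C_2}(k)=0$ and $H^{\sigma,\sigma}_{C_2}(k)=\Z/2$, which is a). The $\Z/2$-statements in a) come out identically, reading the $2\sigma$-level off Proposition~\ref{fin} and replacing $k^*$ by $k^*/k^{*2}$.

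For part b) I would feed the now-known $\sigma$-level into \eqref{connec} with $p=0$:
$$H^{a-1+\sigma,\sigma}_{C_2}(k)\xrightarrow{\delta}H^{a,1}(k)\xrightarrow{p_*}H^{a,\sigma}_{C_2}(k)\xrightarrow{i_*}H^{a+\sigma,\sigma}_{C_2}(k)\xrightarrow{\delta}H^{a+1,1}(k).$$
For $a\neq 0,1$ both $H^{a,1}(k)$ and $H^{a+\sigma,\sigma}_{C_2}(k)$ vanish, so $H^{a,\sigma}_{C_2}(k)=0$. At $a=1$ one gets $\Z/2\xrightarrow{\delta}k^*\xrightarrow{p_*}H^{1,\sigma}_{C_2}(k)\to 0$, hence $H^{1,\sigma}_{C_2}(k)=\coker(\delta)$, and at $a=0$ one gets $H^{0,\sigma}_{C_2}(k)=\ker\!\big(\delta\colon\Z/2\to k^*\big)$. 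The crux is that this $\delta$ sends the generator of $H^{\sigma,\sigma}_{C_2}(k)=\Z/2$ to $-1\in k^*$. Granting this, $\delta$ is injective, so $H^{0,\sigma}_{C_2}(k,\Z)=0$, and since squaring gives $k^*/\{\pm1\}\cong k^{*2}$ its cokernel is $H^{1,\sigma}_{C_2}(k,\Z)=k^{*2}$. With $\Z/2$-coefficients the same $\delta$ carries the generator to $[-1]\in k^*/k^{*2}$: its kernel is $\Z/2$ exactly when $-1$ is a square, i.e. $\sqrt{-1}\in k$, giving $H^{0,\sigma}_{C_2}(k,\Z/2)$ as stated, while its cokernel is $(k^*/k^{*2})/\langle[-1]\rangle\cong k^{*2}/k^{*4}$, the image of the squaring map $k^*/k^{*2}\xrightarrow{\times 2}k^{*2}/k^{*4}$, which is $H^{1,\sigma}_{C_2}(k,\Z/2)$.

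The hard part is pinning down these connecting maps. Proposition~\ref{comp} supplies the algebraic skeleton: the round trip $H^{V}_{C_2}(k)\xrightarrow{\delta}H^{*}_{\mcal{M}}(k)\xrightarrow{p_*}H^{V}_{C_2}(k)$ is multiplication by $2$, and on $H^{1,1}(k)=k^*$ multiplication by $2$ is the squaring map $x\mapsto x^2$ (image $k^{*2}$, kernel $\{\pm1\}$); this is the source of every $k^{*2}$ and $k^{*2}/k^{*4}$ in the statement. But the round-trip relation by itself does not separate $\delta_1$ (which I need to be an isomorphism, not the squaring factor) nor decide whether $\delta$ out of $H^{\sigma,\sigma}_{C_2}(k)=\Z/2$ hits $1$ or $-1$; both missing pieces are the single $\rho=[-1]$ input. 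I would extract it by computing the boundary directly on $C_*z_{equi}(\A(\sigma),0)^{C_2}\simeq\Z/2\oplus O^*[1]$: the boundary identifies the units summand $O^*[1]$ isomorphically with the weight-one $O^*$ (this is $\delta_1$), while the $\Z/2$ summand yields the Euler class of the sign representation, whose boundary is the class of $-1$. Equivalently one reads off the differentials of the explicit shift complexes $Z_{top}(\pm\sigma)$, which are exactly the squaring and sign maps on $O^*$; or, if a clean algebraic argument proves awkward, one reduces by naturality in $k$ to the real and complex realizations, where $[-1]$ is manifestly detected. This sign/orientation identification is the one genuinely non-formal ingredient; with it in hand the two long exact sequences above yield all of Proposition~\ref{sigma} mechanically, the $\Z/2$-statements following by the same bookkeeping through Proposition~\ref{fin}.
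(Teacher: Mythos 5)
Your skeleton coincides with the paper's: run the long exact sequence \eqref{connec} in weight $\sigma$, first at $p=1$ to descend from the $2\sigma$-level of Proposition \ref{2q} to the $\sigma$-level, then at $p=0$ to descend to the $0$-level; and your formal bookkeeping (the vanishing ranges, $\coker(\Z/2\hookrightarrow k^*)=k^*/\{\pm 1\}\cong k^{*2}$, $(k^*/k^{*2})/\langle[-1]\rangle\cong k^{*2}/k^{*4}$, the role of $\sqrt{-1}$) is correct. The problem is that the step you yourself isolate as ``the one genuinely non-formal ingredient'' --- that the connecting map $\alpha\colon H^{\sigma,\sigma}_{C_2}(k,\Z)=\Z/2\to H^{1,1}(k,\Z)=k^*$ is injective (equivalently, hits $-1$) --- is never proved, and none of your three proposed routes closes it. (i) ``The boundary of the Euler class of $\sigma$ is the class of $-1$'' is exactly the assertion to be proved, not an argument; the splitting $C_*z_{equi}(\A(\sigma),0)^{C_2}\simeq \Z/2\oplus O^*[1]$ only computes the $2\sigma$-level, and the maps out of $H^{\sigma,\sigma}_{C_2}(k)$ require untwisting by $S^\sigma$, so they are not read off that splitting. (ii) The differentials of the shift complexes $Z_{top}(\pm\sigma)$ are computed in the paper (Propositions \ref{neg}, \ref{case2}): they are multiplication by $2$, respectively the identity, on the integral weight-$0$ terms $\Z_{tr}(C_2)^{C_2}\to\Z_{tr}(k)^{C_2}$; they involve no units sheaf and no ``sign map on $O^*$'', so nothing about $-1$ can be extracted from them. (iii) Realization: the only comparison available (from \cite{HOV1}) is with finite coefficients over $\C$ in a range of degrees; with $\Z/2$-coefficients over $\C$ the target $H^{1,1}(\C,\Z/2)=\C^*/\C^{*2}=0$ detects nothing, no integral comparison is established, and no real realization is set up in this framework, so the reduction to $\R$ or $\C$ cannot be carried out with the tools at hand.

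What the paper actually does at this point is a concrete diagram chase: it rewrites $H^{n+\sigma,\sigma}_{C_2}(k,\Z)=H^{n+1}_{C_2}(S^\sigma,O^{*C_2})\oplus H^{n}_{C_2}(S^\sigma,\Z/2)$, identifies $\alpha$ with the forgetful map $\Z/2=H^1_{C_2}(S^\sigma,O^{*C_2})\to H^1_{C_2}(S^1,O^{*C_2})=k^*$, and proves injectivity by a four-lemma argument comparing the $O^{*C_2}$-coefficient long exact sequence of $C_{2+}\to S^0\to S^\sigma$ with its $C_{2+}$-smashed version, the input being that the outer forgetful maps are bijective and the next one injective (Lemma 3.19 of \cite{HOV}). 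That is the content your proposal is missing; by contrast your treatment of part a) is essentially the paper's, which pins down $\delta_1$ as the identity induced by the diagonal $k^*\to(k^*\times k^*)^{C_2}=k^*$ --- a precise version of your ``the boundary identifies the $O^*[1]$ summand with the weight-one $O^*$''. Two smaller gaps of the same kind: for the mod-$2$ statements in a) you propose rerunning the sequence with Proposition \ref{fin}, which requires identifying a mod-$2$ connecting map out of $H^{2\sigma-1,\sigma}_{C_2}(k,\Z/2)=\Z/2\oplus k^*/k^{*2}$ that you never compute (the paper instead deduces all mod-$2$ statements from the integral ones via the split universal-coefficient sequence); and your formula $H^{0,\sigma}_{C_2}(k,\Z/2)=\ker\bigl(\Z/2\to k^*/k^{*2}\bigr)$ silently assumes that the preceding map $H^{0,1}(k,\Z/2)\to H^{0,\sigma}_{C_2}(k,\Z/2)$ in the mod-$2$ sequence vanishes, which again the paper obtains from the integral computation.
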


\begin{proof} 
a) We have the following l.e.s. 
$$H^{0,1} (k,\Z)=0\rightarrow H^{\sigma-1,\sigma} _{C _2}(k,\Z)\rightarrow $$
$$H^{2\sigma-1,\sigma} _{C _2}(k,\Z)=k^*\rightarrow H^{1,1}(k,\Z)=k^*\rightarrow H^{\sigma,\sigma} _{C _2}(k,\Z)\rightarrow H^{2\sigma,\sigma} _{C _2}(k,\Z)=\Z/2 \rightarrow H^{2,1}(k,\Z)=0$$
We have that  $$id: H^{2\sigma-1,\sigma} _{C _2}(k,\Z)=H^0 _{C _2}(k,O^*)=O^*(k)=k^*\rightarrow H^{1,1}(k,\Z)=k^*.$$
This is because the connecting map $H^{2\sigma-1,\sigma} _{C _2}(k,\Z)=H^{-1} _{C _2}(k, \Z(\sigma)[2\sigma])=H^{-1} _{C _2}(k,O^*[1]\oplus \Z/2[0])=H^0 _{C _2}(k,O^*)=k^*\rightarrow H^{1,1} (k,\Z)=H^{2\sigma-1,\sigma} _{C _2}(C_2,\Z)=H^{-1} _{C _2}(C _2, \Z(\sigma)[2\sigma])=H^0 _{C _2}(C _2,O^{*C_2})=k^*$ is the identity map being induced by the diagonal $k^*\rightarrow (k^*\times k^*)^{C _2}=k^*$. It implies from the above l.e.s. that $H^{\sigma-1,\sigma} _{C _2}(k,\Z)=0$ and $H^{\sigma,\sigma} _{C _2}(k,\Z)\simeq H^{2\sigma,\sigma} _{C _2}(k,\Z)=\Z/2.$ 
We also have that  $$0=H^{-1,1} (k)\rightarrow H^{\sigma-2,\sigma}_{C _2}(k)\rightarrow H^{2\sigma-2,\sigma}_{C _2}(k,\Z)=0$$ which implies $H^{\sigma-2,\sigma}_{C _2}(k,\Z)=0$. Similarly $H^{\sigma +n,\sigma}_{C _2}(k,\Z)\simeq H^{2\sigma+n,\sigma}_{C _2}(k,\Z)=0$ for any $n\geq 1$ and $n\leq -2$.

 From the split universal coefficients sequence we have that $$H^{\sigma,\sigma}_{C _2}(k,\Z/2)=H^{\sigma,\sigma}_{C _2}(k,\Z)\otimes \Z/2=\Z/2$$ and $$H^{\sigma-1,\sigma}_{C _2}(k,\Z/2)= {_2H}^{\sigma,\sigma}_{C _2}(k,\Z)=\Z/2.$$ 
 
b) We have 
$$H^{0,1} (k,\Z)=0\rightarrow H^{0,\sigma}_{C _2}(k,\Z)\rightarrow H^{\sigma,\sigma}_{C _2}(k,\Z)=\Z/2\stackrel{\alpha}{\rightarrow} H^{1,1} (k,\Z)=k^*\stackrel{\delta}{\rightarrow} $$
$$H^{1,\sigma} _{C _2}(k,\Z)\rightarrow H^{\sigma+1,\sigma} _{C _2}(k,\Z)=0\rightarrow H^{2,1} (k,\Z)=0\rightarrow H^{2,\sigma} _{C _2}(k,\Z)\rightarrow H^{2+\sigma,\sigma} _{C _2}(k,\Z)=0\rightarrow 0.$$ 
It implies that $H^{n,\sigma} _{C _2}(k,\Z)=0$ for any $n\neq 0,1$. The map $\alpha$ is either injective or zero. We prove that the map $\alpha$ is an injective map.

We have $H^1 _{C _2}(S^\sigma,O^{*C_2})=\Z/2$ and $H^i_{C _2}(S^\sigma,O^{*C_2})=0$ for any $i\neq 1$. This is because we have
$$H^{n+\sigma,\sigma} _{C _2}(k,\Z)=H^{n}_{C _2}(k, (O^{*C_2}[1]\oplus \Z/2)[-\sigma])=$$
$$=H^{n}_{C _2}(S^{\sigma}, O^{*C_2}[1]\oplus \Z/2)=H^{n+1}_{C _2}(S^{\sigma},O^{*C_2})\oplus H^{n}_{C _2}(S^{\sigma}, \Z/2).$$
This is zero if $n\neq 0$ and $\Z/2$ if $n=0$ from point a). We have that $H^n_{C _2}(S^{\sigma}, \Z/2)=0$ for any $n$ because the complexes $Z _{top}(-\sigma)$ have trivial cohomology (see proof of Proposition \ref{neg}). Obviously $H^1_{C _2}(S^1,O^{*C_2})=k^*$ and $H^n_{C _2}(S^1,O^{*C_2})=0$ for $n\neq 1$.
The map $\alpha$ is the inclusion map  being given by the forgetting action map $$\Z/2=H^1_{C _2}(S^\sigma,O^{*C_2})\rightarrow \rH^1_{C _2}(C _{2+}\wedge S^\sigma, O^{*C_2})=H^1_{C _2}(S^1,O^{*C_2})=k^*.$$ The injectivity follows from the following diagram:   

\[
\xymatrixrowsep{0.1in}
\xymatrixcolsep {0.1in}
\xymatrix{
H^{0}_{C _2}(k,O^{*C_2})\ar[r]\ar[d]^r&
H^{0}_{C _2}(C _2,O^{*C_2}) \ar[r]\ar[d]^p & 
H^{1}_{C _2}(S^\sigma,O^{*C_2})\ar[r] \ar[d]^\alpha &
H^{1}_{C _2}(k,O^{*C_2}) \ar[r]\ar[d]^q & 
H^{1}_{C _2}(C _2,O^{*C_2})\ar[d]\\
H^{0}_{C _2}(C _2,O^{*C_2})\ar[r] &
\rH^{0}_{C _2}(C _{2+}\wedge C _{2+},O^{*C_2})  \ar[r] & 
\rH^{1}_{C _2}(C _{2+}\wedge S^\sigma, O^{*C_2})  \ar[r] &
H^{1}_{C _2}(C _2,O^{*C_2}) \ar[r] &
\rH^{1}_{C _2}(C _{2+}\wedge C _{2+},O^{*C_2}).
}
\]

The map $p$ is injective and the maps $r$ and $q$ are  bijectives  (Lemma 3.19, \cite{HOV}) which implies that the map $\alpha$ is injective. 

Because we have a short exact sequence of abelian groups
$$0\rightarrow \Z/2\rightarrow k^*\stackrel{\times 2}{\rightarrow} k^{*2}\rightarrow 0$$
it implies that $H^{1,\sigma}_{C _2}(k,\Z)=k^{*2}$ and $H^{0,\sigma}_{C _2}(k,\Z)=0$.

We have  $H^{0,\sigma}_{C _2}(k,\Z/2)={_2H}^{1,\sigma}_{C _2}(k,\Z)=$  all elements $x^2$ of order 2 in $k^*$. 

It implies that $H^{0,\sigma}_{C _2}(k,\Z/2)=\Z/2$ (if $-1\in k^{*2}$) or $H^{0,\sigma}_{C _2}(k,\Z/2)=0$ (if $-1\notin k^{*2}$).  Also $$H^{-1,\sigma}_{C _2}(k,\Z/2)={_2H}^{0,\sigma} _{C _2}(k,\Z)=0.$$

 With $\Z/2-$coefficients we have  the l.e.s. 
 $$\rightarrow H^{-1,\sigma}_{C _2}(k,\Z/2)=0\rightarrow H^{\sigma-1,\sigma}_{C _2}(k,\Z/2)=\Z/2 \simeq H^{0,1}(k,\Z/2)=\Z/2\stackrel{0}{\rightarrow} H^{0,\sigma}_{C _2}(k,\Z/2)\rightarrow$$
 $$\rightarrow H^{\sigma,\sigma}_{C _2}(k,\Z/2)=\Z/2\stackrel{\alpha}{\rightarrow} H^{1,1}(k,\Z/2)=k^*/k^{*2}\stackrel{\delta}{\rightarrow} H^{1,\sigma}_{C _2}(k,\Z/2)\rightarrow 0.$$
 If $-1\in k^{*2}$ then $H^{0,\sigma}_{C _2}(k,\Z/2)=\Z/2$ and the map $\alpha$ is zero which implies $H^{1,\sigma}_{C _2}(k,\Z/2)=k^*/k^{*2}$. If $-1\notin k^{*2}$ then $H^{0,\sigma}_{C _2}(k,\Z/2)=0$ and the map $\alpha$ is injective and from the short exact sequence
$$0\rightarrow \Z/2\rightarrow k^*/k^{*2}\rightarrow H^{1,\sigma}_{C _2}(k,\Z/2)\rightarrow 0,$$ we have that 
$H^{1,\sigma}_{C _2}(k,\Z/2)=k^{*2}/k^{*4}$ with the connecting homomorphism $\delta$ being multiplication by $2$. 
%Also from the universal coefficients theorem $H^{1,\sigma}_{C _2}(k,\Z/2)=H^{1,\sigma}_{C _2}(k,\Z)\otimes \Z/2=k^{*2}\otimes \Z/2=k^{*2}/k^{*4}$ and this is equal to $k^*/k^{*2}$ iff $-1\in k^{*2}$. 
\end{proof}

\begin{proposition} \label{prop}
 a) $H^{\sigma,1} _{C _2}(k,\Z)=\Z/2$, $H^{\sigma+1,1}_{C _2}(k,\Z)=k^*/k^{*2}$, $H^{\sigma+n,1}_{C _2}(k,\Z)=0$ for $n\neq 0,1$.
                               
                               b) $H^{\sigma,1}_{C _2}(k,\Z/2)=\Z/2\oplus k^*/k^{*2}$, $H^{\sigma+1,1}_{C _2}(k,\Z/2)=k^*/k^{*2}$, $H^{\sigma-1,1}_{C _2}(k,\Z/2)=\Z/2$, $H^{\sigma+n,1}_{C _2}(k,\Z/2)=0$ for $n\neq 0,-1,1$.

\end{proposition}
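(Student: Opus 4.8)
The plan is to run the long exact sequence \ref{connec} in weight $1$ (so $b=1$, $q=0$) and bootstrap from its $p=0$ edge, where both the Bredon and the motivic groups are already understood; part b) will then be deduced from part a) by universal coefficients. Two inputs are needed. First, weight-one motivic cohomology of the ground field is concentrated in degree one: since $\Z(1)\simeq\G_m[-1]$ one has $H^{a,1}(k,\Z)=k^*$ for $a=1$ and $0$ otherwise. Second, because $\spec k$ carries the trivial $C_2$-action, the identification recalled in Section 2 gives $H^{a,1}_{C_2}(k,\Z)=H^{a,1}(k,\Z)$, so the $p=0$ Bredon groups coincide with the motivic ones. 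Specializing \ref{connec} to $p=0$ produces, for each integer $a$, the exact strand
\begin{equation*}
H^{a-1+\sigma,1}_{C_2}(k)\xrightarrow{\delta} H^{a,1}(k)\xrightarrow{p_*} H^{a,1}(k)\xrightarrow{i_*} H^{a+\sigma,1}_{C_2}(k)\xrightarrow{\delta} H^{a+1,1}(k),
\end{equation*}
whence $H^{a+\sigma,1}_{C_2}(k)$ sits in a short exact sequence with subobject $\coker\big(p_*\colon H^{a,1}(k)\to H^{a,1}(k)\big)$ and quotient $\ker\big(p_*\colon H^{a+1,1}(k)\to H^{a+1,1}(k)\big)$.

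The crux is to identify the single nonzero instance of the transfer $p_*\colon H^{1,1}(k)=k^*\to H^{1,1}_{C_2}(k)=k^*$; all other occurrences of $p_*$ are maps between zero groups. I claim it is multiplication by $2$, i.e.\ $x\mapsto x^2$ on $k^*$. This is where Proposition \ref{comp} enters: its composition $H^{1,1}_{C_2}(k)\xrightarrow{p^*}H^{1,1}(k)\xrightarrow{\delta^*}H^{1,1}_{C_2}(k)$ is multiplication by $2$, and here the first map $p^*$ is the restriction (forget the action), which for the trivial action is the identity on $k^*$, forcing $\delta^*=p_*$ to be multiplication by $2$ (in agreement with the fact that the transfer for a trivial action multiplies by $|C_2|=2$). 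Pinning down this identification cleanly — that the relevant LES map is exactly the transfer of Proposition \ref{comp} and that restriction is the identity for the trivial action — is the main obstacle; the rest is bookkeeping.

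Granting the claim I would read off the three cases of part a). For $a\neq 0,1$ all four motivic groups in the strand vanish, so $H^{a+\sigma,1}_{C_2}(k,\Z)=0$, which gives $H^{\sigma+n,1}_{C_2}(k,\Z)=0$ for $n\neq 0,1$. For $a=0$ the subobject is $0$ and the quotient is $\ker(x\mapsto x^2)=\mu_2(k)\cong\Z/2$ (using $\ch k=0$), so $H^{\sigma,1}_{C_2}(k,\Z)=\Z/2$. For $a=1$ the quotient is $0$ and the subobject is $\coker(x\mapsto x^2)=k^*/k^{*2}$, so $H^{\sigma+1,1}_{C_2}(k,\Z)=k^*/k^{*2}$. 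Both extensions are trivial since one of the two outer terms vanishes in each case.

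Finally, part b) follows from part a) through the split universal coefficient sequence $0\to H^{V,1}_{C_2}(k,\Z)\otimes\Z/2\to H^{V,1}_{C_2}(k,\Z/2)\to {}_2H^{V+1,1}_{C_2}(k,\Z)\to 0$ taken at $V=\sigma+n$, exactly as in the proof of Proposition \ref{sigma}. Using that $k^*/k^{*2}$ is $2$-torsion, this yields $H^{\sigma,1}_{C_2}(k,\Z/2)=\Z/2\oplus k^*/k^{*2}$ (from $H^{\sigma,1}(\Z)=\Z/2$ together with the $2$-torsion of $H^{\sigma+1,1}(\Z)=k^*/k^{*2}$), $H^{\sigma+1,1}_{C_2}(k,\Z/2)=k^*/k^{*2}$, $H^{\sigma-1,1}_{C_2}(k,\Z/2)={}_2H^{\sigma,1}_{C_2}(k,\Z)=\Z/2$, and $0$ for $n\neq -1,0,1$, matching the asserted list.
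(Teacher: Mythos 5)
Your overall architecture matches the paper's: run the weight-one instance of the long exact sequence \ref{connec} at $p=0$, reduce everything to the kernel and cokernel of the single map $k^{*}=H^{1,1}(k,\Z)\to H^{1,1}_{C_2}(k,\Z)=k^{*}$, identify that map as multiplication by $2$ via Proposition \ref{comp}, and deduce part b) from the split universal coefficient sequence. The bookkeeping (subobject $=$ cokernel, quotient $=$ kernel, vanishing for $n\neq 0,1$, and the $\Z/2$-coefficient computations) is correct, and it is the same skeleton the paper uses.

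The gap is exactly where you flag it, and it is not "the rest is bookkeeping" territory. Proposition \ref{comp} gives that the composite $H^{1,1}_{C_2}(k,\Z)\xrightarrow{p^{*}}H^{1,1}(k,\Z)\xrightarrow{\delta^{*}}H^{1,1}_{C_2}(k,\Z)$ is multiplication by $2$; to conclude that $\delta^{*}$ itself has kernel $\Z/2$ and cokernel $k^{*}/k^{*2}$ you must know $p^{*}$ is invertible. You assert $p^{*}$ is the identity because "restriction is the identity for the trivial action." That is true for the constant Mackey functor in topology and is morally true here, but it is not available in this paper: the identification $H^{1,1}_{C_2}(k)\cong H^{1,1}(k)$ of Proposition \ref{triv} is produced by a morphism-of-sites argument, and nowhere is it shown that this isomorphism intertwines the map induced by $C_{2+}\to S^{0}$ with the identity; that compatibility (of the trivial-action identification with the free-quotient identification along $C_{2+}\wedge X\to X$) would itself require a proof you do not supply. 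The paper's proof is structured precisely to sidestep this: it proves that $p^{*}\colon H^{1,1}_{C_2}(k,\Z)\to H^{1,1}(k,\Z)$ is an isomorphism by exactness, showing the flanking groups $H^{1-\sigma,1}_{C_2}(k,\Z)$ and $H^{2-\sigma,1}_{C_2}(k,\Z)$ vanish, which in turn uses the motivic isotropy sequence, the $(2\sigma-2,\sigma-1)$-periodicity of the Borel theory, Proposition \ref{case1}, and the weight-$\sigma$ computations of Proposition \ref{sigma}. So the step you call the main obstacle is the actual mathematical content of the paper's proof. To repair your argument, either prove the site-theoretic compatibility statement, or replace it by the paper's vanishing argument; note that an isomorphism (rather than the identity) is all you need, since kernels and cokernels are unchanged under precomposition with an isomorphism.
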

\begin{proof} We have a l.e.s. 

$$H^{0,1}(k,\Z)=0\rightarrow H^{0,1} _{C _2}(k,\Z)=0\rightarrow H^{\sigma,1} _{C _2}(k,\Z)\rightarrow H^{1,1}(k,\Z)=k^*\stackrel{\delta}{\rightarrow}$$
$$\stackrel{\delta}{\rightarrow} H^{1,1} _{C _2}(k,\Z)=k^*\rightarrow H^{1+\sigma,1}_{C _2}(k,\Z)\rightarrow $$

$$\rightarrow H^{2,1}(k,\Z)=0\rightarrow H^{2,1} _{C _2}(k,\Z)=0\rightarrow H^{2+\sigma,1}_{C _2}(k,\Z)\rightarrow$$
$$\rightarrow H^{3,1}(k,\Z)=0\rightarrow H^{3,1} _{C _2}(k,\Z)=0\rightarrow H^{3+\sigma,1} _{C _2}(k,\Z)\rightarrow$$
We obtain $H^{\sigma+n,1}_{C _2}(k,\Z)=0$ for $n\neq 1,0$.

We prove now that the connecting map $\delta$ is the multiplication by $2$. According to Proposition \ref{comp} the composition 
$$H^{1,1} _{C _2}(k,\Z)\rightarrow H^{1,1} (k,\Z)\stackrel{\delta}{\simeq} H^{1,1} _{C _2}(k,\Z)$$
is multiplication by 2. We have the following l.e.s.
$$0\rightarrow H^{1-\sigma,1} _{C _2}(k,\Z)\rightarrow H^{1,1} _{C _2}(k,\Z)\rightarrow H^{1,1}(k,\Z)\rightarrow H^{2-\sigma,1} _{C _2}(k,\Z)\rightarrow 0.$$
Using the motivic isotropy sequence and the periodicity of Bredon motivic cohomology of $\EG \Z/2$ (see also Proposition \ref{borel}) we obtain from Proposition \ref{sigma} that
$$H^{1-\sigma,1} _{C _2}(k,\Z)=H^{1-\sigma,1} _{C _2}(\EG\Z/2,\Z)=H^{\sigma-1,\sigma} _{C _2}(\EG\Z/2,\Z)=H^{\sigma-1,\sigma} _{C _2}(k,\Z)=0.$$
We also have that
$$0\rightarrow H^{2-\sigma,1} _{C _2}(k,\Z)\rightarrow H^{2-\sigma,1} _{C _2}(\EG\Z/2)\rightarrow \Z/2\rightarrow H^{3-\sigma,1} _{C _2}(k,\Z)=0$$
with $H^{2-\sigma,1} _{C _2}(\EG\Z/2)=H^{\sigma,\sigma} _{C _2}(\EG\Z/2)=H^{\sigma,\sigma} _{C _2}(k,\Z)=\Z/2$. It implies that $H^{2-\sigma,1} _{C _2}(k,\Z)=0$ and thus the map $H^{1,1} _{C _2}(k,\Z)\rightarrow H^{1,1} (k,\Z)$ is an isomorphism. This implies that the connecting map $\delta$ is multiplication by 2.

We obtain $H^{\sigma+1,1}_{C _2}(k,\Z)=k^*/k^{*2}$ and $H^{\sigma,1}_{C _2}(k,\Z)=\Z/2$.

With $\Z/2-$coefficients we have that $$H^{\sigma,1}_{C _2}(k,\Z/2)=H^{\sigma,1}_{C _2}(k,\Z)\otimes\Z/2\oplus {_2H}^{1+\sigma,1}_{C _2}(k,\Z)=\Z/2\oplus k^*/k^{*2}.$$ We also have 
$H^{1+\sigma,1}_{C _2}(k,\Z/2)=H^{1+\sigma,1}_{C _2}(k,\Z)\otimes\Z/2\oplus {_2H}^{2+\sigma,1}_{C _2}(k,\Z)=k^*/k^{*2}$ and $H^{-1+\sigma,1}_{C _2}(k,\Z/2)=H^{-1+\sigma,1}_{C _2}(k,\Z)\otimes\Z/2\oplus {_2H}^{\sigma,1}_{C _2}(k,\Z)=\Z/2.$ The rest of groups are obviously zero.
\end{proof}
In conclusion we have:
\begin{corollary}\label{gh} For any field $k$ of $char(k)=0$ we have 

$H^{\sigma,\sigma}_{C _2}(k,\Z)=\Z/2$, $H^{1,\sigma}_{C _2}(k,\Z)=k^{*2}$, $H^{\sigma,1}_{C _2}(k,\Z)=\Z/2$, $H^{1,1}_{C_2}(k,\Z)=k^*$. 

With $\Z/2-$coefficients we have $H^{\sigma,\sigma}_{C _2}(k,\Z/2)=\Z/2$, $H^{1,\sigma}_{C _2}(k,\Z/2)=k^{*2}/k^{*4}$, $H^{\sigma,1}_{C _2}(k,\Z/2)=\Z/2\oplus k^*/k^{*2}$, $H^{1,1}(k,\Z/2)=k^*/k^{*2}$.
\end{corollary}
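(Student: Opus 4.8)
The plan is to assemble the corollary directly from the computations already performed in this section, since each of the eight groups has been determined in one of the preceding propositions or reduces to a classical motivic computation via the trivial-action identification. Thus the proof amounts to matching each entry with its source and harmonizing the two cases that occur for one of the mod-$2$ groups.

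For the integral statements I would proceed as follows. The identity $H^{\sigma,\sigma}_{C_2}(k,\Z)=\Z/2$ is Proposition \ref{sigma}(a), the identity $H^{1,\sigma}_{C_2}(k,\Z)=k^{*2}$ is Proposition \ref{sigma}(b), and $H^{\sigma,1}_{C_2}(k,\Z)=\Z/2$ is Proposition \ref{prop}(a). For the last integral entry I would use that $\spec k$ carries the trivial $C_2$-action, so that $H^{1,1}_{C_2}(k,\Z)=H^{1,1}(k,\Z)$ by the trivial-action identification recalled in the preliminaries (see Proposition \ref{triv}), and then invoke the classical computation $H^{1,1}(k,\Z)=K^1_M(k)=k^*$ (\cite{MVW}, \cite{Tot}).

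The mod-$2$ statements are read off in the same manner: $H^{\sigma,\sigma}_{C_2}(k,\Z/2)=\Z/2$ comes from Proposition \ref{sigma}(a), $H^{\sigma,1}_{C_2}(k,\Z/2)=\Z/2\oplus k^*/k^{*2}$ from Proposition \ref{prop}(b), and $H^{1,1}(k,\Z/2)=k^*/k^{*2}$ once more from the trivial-action identification together with the mod-$2$ Milnor computation $H^{1,1}(k,\Z/2)=K^1_M(k)/2=k^*/k^{*2}$. The one entry that requires a word of care is $H^{1,\sigma}_{C_2}(k,\Z/2)=k^{*2}/k^{*4}$: Proposition \ref{sigma}(b) records this group separately in the two cases according to whether $\sqrt{-1}\in k$. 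In the case $\sqrt{-1}\notin k$ the stated value is already $k^{*2}/k^{*4}$, while in the case $\sqrt{-1}\in k$ the stated value is $k^*/k^{*2}$; to see that the two agree I would invoke the isomorphism $k^*/k^{*2}\stackrel{\times 2}{\simeq}k^{*2}/k^{*4}$ recorded in Proposition \ref{sigma}, valid precisely when $\sqrt{-1}\in k$, so that both cases yield the uniform value $k^{*2}/k^{*4}$. This harmonization of the two cases is the only mildly nontrivial point; every other entry is an immediate citation of the results established earlier in the section.
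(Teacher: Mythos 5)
Your proposal is correct and matches the paper's intent: the corollary is stated as a summary ("In conclusion we have:") with no separate proof, being exactly the compilation of Proposition \ref{sigma}(a),(b) and Proposition \ref{prop}(a),(b) together with the trivial-action identification $H^{1,1}_{C_2}(k,\Z)=H^{1,1}(k,\Z)=K^M_1(k)=k^*$ and its mod-$2$ analogue. Your harmonization of the two cases for $H^{1,\sigma}_{C_2}(k,\Z/2)$ via the isomorphism $k^*/k^{*2}\stackrel{\times 2}{\simeq}k^{*2}/k^{*4}$ when $\sqrt{-1}\in k$ is precisely the reconciliation the paper itself records in the statement of Proposition \ref{sigma} (the case split occurs in that proposition's proof, not its statement, but the content is identical).
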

In the following theorem we compute the positive cone of Bredon motivic cohomology of a quadratically closed field or a euclidian field (of characteristic zero).
\begin{theorem}  \label{com} Let $k$ be a quadratically closed field or a euclidian field. If $n\geq 2$ is even, then
 $$
H^{n\sigma-m,1}_{C _2}(k, \Z) =
\begin{cases}
k^* & m=n-1\\
k^*/k^{*2} & -1\leq m<n-1, m=\text{odd} \\
\Z/2& 0\leq m< n-1, m=\text{even}\\
0& otherwise\\ 
\end{cases}.
$$
If $n\geq 1$ is odd, then
 $$
H^{n\sigma-m,1}_{C _2}(k, \Z) =
\begin{cases}
k^*/k^{*2} & -1\leq m<n-1, m=\text{odd} \\
\Z/2& 0\leq m \leq n-1, m=\text{even}\\
0& otherwise\\ 
\end{cases}.
$$
\end{theorem}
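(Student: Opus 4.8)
The plan is to induct on $n$ using the long exact sequence \ref{connec} attached to the cofiber sequence $C_{2+}\to S^0\to S^\sigma$. For a fixed $\sigma$-degree this sequence interweaves the weight-$1$ Bredon groups of $\sigma$-degree $n-1$ and $n$ with the weight-$1$ motivic cohomology of $k$, and since $H^{j,1}(k,\Z)=k^*$ for $j=1$ and vanishes for all other $j$, the motivic input sits in a single spot. Away from this spot the maps $i_*$ in \ref{connec} are isomorphisms, so that
\[
H^{n\sigma-m,1}_{C_2}(k,\Z)\;\cong\;H^{(n-1)\sigma-m,1}_{C_2}(k,\Z)\qquad\text{for }m\notin\{n-2,\,n-1\},
\]
and by the inductive hypothesis these agree with the tabulated values (in particular they vanish for $m\ge n$ and stabilize the $m=-1$ entry to $k^*/k^{*2}$). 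The base case $n=1$ is Proposition \ref{prop}.

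At the two remaining indices $m=n-1,\,n-2$ the sequence \ref{connec} collapses, using that the level-$(n-1)$ entry at $m=n-1$ vanishes (it lies outside the nonzero range), to the exact sequence
\[
0\to H^{n\sigma-(n-1),1}_{C_2}(k,\Z)\xrightarrow{\delta} k^*\xrightarrow{p_*} H^{(n-1)\sigma-(n-2),1}_{C_2}(k,\Z)\to H^{n\sigma-(n-2),1}_{C_2}(k,\Z)\to 0.
\]
Thus the two new groups are exactly $\ker p_*$ and $\coker p_*$ for the single map $p_*\colon k^*\to H^{(n-1)\sigma-(n-2),1}_{C_2}(k,\Z)$, whose target, by the inductive formula, is $k^*$ when $n$ is odd and $\Z/2$ when $n$ is even.

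The crux is therefore to identify $p_*$. I would use Proposition \ref{comp} — the composite of the connecting map with the projection $C_{2+}\to S^0$ is multiplication by $2$ — together with the base identifications of Proposition \ref{prop} and Corollary \ref{gh}, to show that $p_*$ is the squaring map $k^*\xrightarrow{x\mapsto x^2}k^*$ when $n$ is odd, and the zero map $k^*\to\Z/2$ when $n$ is even. Granting this, multiplication by $2$ on $k^*$ has kernel $\{\pm1\}\cong\Z/2$ and cokernel $k^*/k^{*2}$, so for $n$ odd one reads off $\ker p_*=\Z/2$ at $m=n-1$ and $\coker p_*=k^*/k^{*2}$ at $m=n-2$, while for $n$ even $\ker p_*=k^*$ at $m=n-1$ and $\coker p_*=\Z/2$ at $m=n-2$. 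These match the tabulated entries and complete the induction, the parity alternation being precisely the alternation of the target between $k^*$ and $\Z/2$.

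The main obstacle is exactly this determination of $p_*$. The odd case is essentially forced by Proposition \ref{comp}, but the even case, where the target is only $\Z/2$ and the naive transfer–restriction relation is too weak, is delicate: a nonzero $p_*$ would replace $k^*$ by a proper subgroup (e.g. the squares in a euclidean field), so its vanishing genuinely needs proof. Here I would map the isotropy cofiber sequence $\EG C_2{}_+\to S^0\to\EGt C_2$ into \ref{connec}, exploiting the weight-$1$ computation of $\EGt C_2$ in Proposition \ref{case1} and the periodicity of $\EG C_2$. It is at this comparison that the hypothesis that $k$ is quadratically closed or euclidean becomes essential: it makes $k^*/k^{*2}$ (hence the mod-$2$ Galois cohomology and all groups produced by the induction) completely explicit and removes the extension and map ambiguities that would otherwise obstruct the argument over a general field.
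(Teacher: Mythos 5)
Your skeleton is the paper's: induct on $n$ via the l.e.s.\ \ref{connec}, note that $i_*$ is an isomorphism away from $m\in\{n-2,n-1\}$, and at those two indices collapse to the five-term sequence exhibiting the two new groups as $\ker p_*$ and $\coker p_*$ for $p_*\colon k^*\to H^{(n-1)\sigma-(n-2),1}_{C_2}(k,\Z)$. Your odd case is also correct: the previous (even) step makes the restriction $H^{(n-1)\sigma-(n-2),1}_{C_2}(k,\Z)\to H^{1,1}(k,\Z)$ an isomorphism, and Proposition \ref{comp} then forces $p_*$ to be squaring on $k^*$, with kernel $\{\pm 1\}\cong\Z/2$ and cokernel $k^*/k^{*2}$.

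The genuine gap is the even case, which you rightly call the crux and then do not prove. Your proposed substitute --- mapping the isotropy sequence $\EG C_{2+}\to S^0\to\EGt C_2$ into \ref{connec} and invoking Proposition \ref{case1} and the periodicity of $\EG C_2$ --- is only a gesture, and it cannot by itself determine $p_*$: the Borel comparison (Proposition \ref{borel}) merely transports the weight-$1$ question to the weight-$\sigma$ question, where the identical identification problem reappears (the paper's Theorem \ref{pos1} meets it there and resolves it by the same diagram argument used in Theorem \ref{com}, not by periodicity). Moreover, your claim that ``the naive transfer--restriction relation is too weak'' in the even case is exactly backwards: that relation is precisely the paper's tool. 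Applied at the even step, Proposition \ref{comp} (equivalently, the fold/pinch diagram) gives $p_*\circ\mathrm{res}=\times 2=0$ on the $\Z/2$ source, so $p_*$ vanishes on $\mathrm{im}(\mathrm{res})$; the induction identifies $\mathrm{im}(\mathrm{res})\subset k^*$ as exactly $\{\pm 1\}$, since the restriction embeds the $\Z/2$ as the kernel of the squaring map produced at the previous odd step; hence $p_*(-1)=0$. Since the target is $\Z/2$, $p_*$ kills $k^{*2}$ automatically, and the field hypothesis now enters purely group-theoretically: $k^*=k^{*2}$ for $k$ quadratically closed, and $k^*=k^{*2}\cup(-1)k^{*2}$ for $k$ euclidian, so $p_*=0$ --- equivalently, a surjective $p_*$ would force $\ker p_*=k^{*2}$ to contain $-1$, impossible over a formally real field. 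This, and not any ``removal of extension and map ambiguities,'' is where the hypothesis on $k$ is used; without this step your induction cannot get past $n=2$.
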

\begin{proof}We have the following l.e.s. $$0 \rightarrow H^{2\sigma-1,1} _{C _2}(k,\Z)\rightarrow H^{1,1}(k,\Z)=k^*\stackrel{\alpha}{\rightarrow} H^{\sigma,1} _{C _2}(k,\Z)=\Z/2\rightarrow$$ $$ H^{2\sigma,1} _{C _2}(k,\Z)\rightarrow H^{2,1}(k,\Z)=0.$$ 
 Here $H^{\sigma,1} _{C _2}(k,\Z)=\Z/2$ from Proposition \ref{prop}. The composition 
$H^{\sigma,1} _{C _2}(k,\Z)\rightarrow H^{\sigma,1} _{C _2}(C _2,\Z)\rightarrow H^{\sigma,1} _{C _2}(k,\Z)$ is multiplication by 2 (the vertical maps in the diagram below are given by forgetting the action):
$$
\xymatrix{
\rH^{2\sigma,1} _{C _2}(S^\sigma)\ar@{^{(}->}[r]\ar@{^{(}->}[d]& \rH^{2\sigma, 1}_{C_2}(C_{2\,+}\wedge S^1) \ar[r]^-{\alpha}\ar[d] & \rH^{2\sigma, 1}_{C_{2}}(S^\sigma) \ar[d]\ar@{^{(}->}[d] \\
\rH^{2,1}_{\mcal{M}}(S^1) \ar[r] & \rH^{2,1}_{\mcal{M}}(S^1\vee S^1) \ar[r] &
\rH^{2,1}_{\mcal{M}}(S^1).
} 
$$
The first upper horizontal map is induced by the fold map $C _{2+}\wedge S^\sigma\rightarrow S^{\sigma}$ and the second is induced by the pinch map $S^\sigma\rightarrow C _+\wedge S^{1}$.
 
It implies that the homomorphism  $\alpha: k^*\rightarrow \Z/2$ has the property that $\alpha(1)=\alpha(-1)=1$. If $k$ is quadratically closed (i.e. $k=k^2$) then $\alpha$ is the zero map. If $k$ is a euclidian field (i.e. formally real and $k^*/k^{*2}\simeq \Z/2$) it implies that the map $\alpha$ is zero. Indeed suppose that $\alpha$ is surjective. Then $H=Ker(\alpha)$ is a subgroup of index 2 of $k^*$ that contains $k^{*2}$. It implies that $H=k^{*2}$, but $-1\in H$ because $\alpha(-1)=0$. This is contradiction because $k$ is formally real.

 Then we have $H^{2\sigma,1} _{C _2}(k,\Z)\simeq H^{\sigma,1} _{C _2}(k,\Z)=\Z/2$ and $H^{2\sigma-1,1} _{C _2}(k,\Z)=k^*$. By induction we have $H^{n\sigma,1} _{C _2}(k,\Z)\simeq H^{\sigma,1} _{C _2}(k,\Z)=\Z/2$ for any $n\geq 1$. We also have $H^{n\sigma+1,1} _{C _2}(k,\Z)=H^{2\sigma+1,1} _{C _2}(k,\Z)=H^{\sigma+1,1} _{C _2}(k,\Z)=k^*/k^{*2}$. 
 
 According to the sequence 
$$H^{n-1+s,1} _{C _2}(k,\Z)\rightarrow H^{(n-1)\sigma+s,1} _{C _2}(k,\Z)\rightarrow H^{n\sigma+s,1} _{C _2}(k,\Z)\rightarrow H^{n+s,1}(k,\Z)$$
and by induction we obtain that $H^{n\sigma+s,1} _{C _2}(k,\Z)=0$ if $s\geq 2$ and $n\geq 1$. This follows from  $H^{n\sigma+s,1} _{C _2}(k,\Z)=0$ if $n=0$ and $s\geq 2$. %Because of the l.e.s. for any $n\geq 1$
%$$H^{-n,1} _{C _2}(k,\Z)=0\rightarrow H^{-n\sigma,1} _{C _2}(k,\Z)\rightarrow H^{(-n+1)\sigma,1} _{C _2}(k,\Z)\rightarrow H^{-n+1,1}(k,\Z)=0$$ 
%we obtain $H^{-n\sigma,1} _{C _2}(k,\Z)=H^{-\sigma,1} _{C _2}(k,\Z)=H^{-1} _{C _2}(S^\sigma,O^*)=0$. 
We also have 
$$0\rightarrow H^{2\sigma-2,1} _{C _2}(k,\Z)=0\rightarrow H^{3\sigma-2,1} _{C _2}(k,\Z)\rightarrow H^{1,1}(k,\Z)\stackrel{\psi}{\rightarrow} H^{2\sigma-1,1} _{C _2}(k,\Z)\rightarrow H^{3\sigma-1,1} _{C _2}(k,\Z)\rightarrow 0.$$
The map $\psi$ is multiplication by 2 because we have from the above that $$H^{\sigma-1,1} _{C _2}(k,\Z)=0\rightarrow H^{2\sigma-1,1} _{C _2}(k,\Z)\simeq H^{1,1}(k,\Z)\rightarrow 0$$ and the map on the $(3,1)-$ motivic cohomology induced by $S^1\wedge S^1\rightarrow S^1\wedge S^1\vee S^1\wedge S^1\rightarrow S^1\wedge S^1$ is given by multiplication by 2. This is shown in the diagram below where the vertical maps are forgetting action maps and the lower horizontal composition is the multiplication by 2.
$$
\xymatrix{
\rH^{3\sigma,1}_{C _2}(S^\sigma\wedge S^1)\ar[d] _{\iso}\ar[r]^{\iso} & \rH^{3\sigma, 1}_{C_2}(S^\sigma\wedge S^1\wedge C_{2\,+}) \ar[r]^-{\psi}\ar[d] & \rH^{3\sigma, 1}_{C_{2}}(S^{\sigma}\wedge S^1) \ar[d] _{\iso} \\
\rH^{3,1}_{\mcal{M}}(S^1\wedge S^1) \ar[r] & \rH^{3,1}_{\mcal{M}}(S^1\wedge S^1\vee S^1\wedge S^1) \ar[r] &
\rH^{3,1}_{\mcal{M}}(S^1\wedge S^1).
} 
$$
In conclusion we have $H^{3\sigma-2,1} _{C _2}(k,\Z)=\Z/2$ and $H^{3\sigma-1,1} _{C _2}(k,\Z)=k^*/k^{*2}=H^{n\sigma-1,1} _{C _2}(k,\Z)$ for any $n\geq 3$. 
%\ar@{^{(}->}[d]
 According to the above sequences we have $$H^{2\sigma-s,1} _{C _2}(k,\Z)=H^{\sigma-s,1} _{C _2}(k,\Z)=0$$ for any $s\geq 2$. We have $H^{3\sigma-s,1} _{C _2}(k,\Z)=H^{2\sigma-s,1} _{C _2}(k,\Z)=0$ for any $s\geq 3$. We have 
$$0\rightarrow H^{3\sigma-3,1} _{C _2}(k,\Z)=0\rightarrow H^{4\sigma-3,1} _{C _2}(k,\Z)\rightarrow H^{1,1} _{C _2}(k,\Z)\rightarrow H^{3\sigma-2,1} _{C _2}(k,\Z)\rightarrow H^{4\sigma-2,1} _{C _2}(k,\Z)\rightarrow 0$$
The middle map  $H^{1,1} _{C _2}(k,\Z)=k^*\rightarrow H^{3\sigma-2,1} _{C _2}(k,\Z)=\Z/2$ is zero from the argument above. We use that the map $H^{2\sigma-2,1} _{C _2}(k,\Z)=0\rightarrow H^{3\sigma-2,1} _{C _2}(k,\Z)\rightarrow H^{1,1}(k,\Z)$ is injective and that we have the following commutative diagram with the lower horizontal composition being multiplication by 2.
$$
\xymatrix{
\rH^{4\sigma-2,1} _{C _2}(S^\sigma)\ar@{^{(}->}[r]\ar@{^{(}->}[d]& \rH^{4\sigma-2, 1}_{C_2}(C_{2\,+}\wedge S^1) \ar[r]\ar[d] & \rH^{4\sigma-2, 1}_{C_{2}}(S^\sigma) \ar[d]\ar@{^{(}->}[d] \\
\rH^{2,1}_{\mcal{M}}(S^1) \ar[r] & \rH^{2,1}_{\mcal{M}}(S^1\vee S^1) \ar[r] &
\rH^{2,1}_{\mcal{M}}(S^1).
} 
$$

Then we have $H^{4\sigma-3,1} _{C _2}(k,\Z)=k^*$ and $H^{4\sigma-2,1} _{C _2}(k,\Z)=\Z/2$.  We have $H^{4\sigma-s,1} _{C _2}(k,\Z)=H^{3\sigma-s,1} _{C _2}(k,\Z)=0$ for any $s\geq 4$.

Let $n$ be even. By induction we have  $H^{n\sigma-m,1} _{C _2}(k,\Z)=k^*/k^{*2}$ if $-1\leq m<n-1$, $m$ odd and $H^{n\sigma-n+1,1} _{C _2}(k,\Z)=k^*$ and $H^{n\sigma-m,1} _{C _2}(k,\Z)=\Z/2$ for $0\leq m<n-1$, $m$ even.

Let $n$ be odd. Then $H^{n\sigma-m,1} _{C _2}(k,\Z)=k^*/k^{*2}$ if $-1\leq m\leq n-2$, $m$ odd and $H^{n\sigma-m,1} _{C _2}(k,\Z)=\Z/2$ if $-1\leq m\leq n$, $m$ even. 
\end{proof}

The negative cone in weight $1$ of Bredon motivic cohomology of a formally real field or a quadratically closed field of characteristic zero is computed below.
\begin{theorem} \label{neg1} Let $k$ be a formally real field or a quadratically closed field.
 
If $n\geq 2$ is even then
 $$
H^{m-n\sigma,1}_{C _2}(k, \Z) =
\begin{cases}
k^* & m=n+1\\
k^*/k^{*2} & 2<m\leq n, m=\text{even} \\
\Z/2& 2<m\leq n, m=\text{odd}\\
0& otherwise\\ 
\end{cases}.
$$
If $n\geq 1$ is odd then 
$$
H^{m-n\sigma,1}_{C _2}(k, \Z) =
\begin{cases}
k^*/k^{*2} & 2<m\leq n+1, m=\text{even} \\
\Z/2& 2<m\leq n+1, m=\text{odd}\\
0& otherwise\\ 
\end{cases}.
$$
If $n=0$ then $H^{m,1} _{C _2}(k,\Z)=H^{m,1}(k,\Z)=k^*$ if $m=1$ and $H^{m,1} _{C _2}(k,\Z)=0$ if $m\neq 1$.
\end{theorem}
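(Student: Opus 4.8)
The plan is to run, in the negative $\sigma$-direction, the same inductive machine that proves Theorem \ref{com} for the positive cone, taking as input the weight-$1$ groups already in hand. The case $n=0$ is immediate: for $k$ with trivial action and indices carrying no $\sigma$-part, $H^{m,1}_{C_2}(k,\Z)=H^{m,1}(k,\Z)$, which is $k^*$ for $m=1$ and $0$ otherwise. The case $n=1$ is already settled inside the proof of Theorem \ref{com} (there $H^{1-\sigma,1}_{C_2}(k,\Z)=0$ and $H^{2-\sigma,1}_{C_2}(k,\Z)=0$, and the remaining $H^{m-\sigma,1}_{C_2}(k)$ vanish after reducing, via the motivic isotropy sequence and the $(2\sigma-2,\sigma-1)$-periodicity of $\EG C_2$, to the weight-$\sigma$ groups of Proposition \ref{sigma} and the $\EGt C_2$-computation of Proposition \ref{case1}); hence every group vanishes when $n=1$, matching the empty range in the statement. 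For $n=2$ one applies the long exact sequence below with the vanishing level $n=1$, which forces $H^{m-2\sigma,1}_{C_2}(k)\cong H^{m-2,1}(k)$, i.e. $k^*$ at $m=3$ and $0$ elsewhere.

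For the inductive step I would use the long exact sequence \ref{connec} with cohomology index $m-n\sigma$ and weight $1$, namely
$$H^{(m-1)-(n-1)\sigma,1}_{C_2}(k)\xrightarrow{\delta}H^{m-n,1}(k)\xrightarrow{p_*}H^{m-n\sigma,1}_{C_2}(k)\xrightarrow{i_*}H^{m-(n-1)\sigma,1}_{C_2}(k)\xrightarrow{\delta}H^{m-n+1,1}(k),$$
which relates level $n$ to the known level $n-1$. Because weight-$1$ motivic cohomology of a field is concentrated in degree $1$, the flanking groups $H^{m-n,1}(k)$ and $H^{m-n+1,1}(k)$ vanish for every $m\notin\{n,n+1\}$; there $i_*$ is an isomorphism $H^{m-n\sigma,1}_{C_2}(k)\cong H^{m-(n-1)\sigma,1}_{C_2}(k)$, which both propagates the previously computed values and shows the groups are independent of $n$ in the range $2<m<n$ and vanish for $m\le 2$. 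All the content is therefore concentrated at $m=n$ and $m=n+1$, where exactly one copy of $H^{1,1}(k)=k^*$ enters, through a connecting map $\delta$ at $m=n$ and through $p_*$ at $m=n+1$.

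The heart of the argument is to identify these two maps, and I would proceed exactly as in Theorem \ref{com}. Proposition \ref{comp} together with a fold/pinch diagram (vertical forget-action maps isomorphisms, lower horizontal composite equal to multiplication by $2$ on motivic cohomology) shows that the relevant map $\delta\colon k^*\to k^*$ is multiplication by $2$. Writing $H^{1,1}(k)=k^*$ multiplicatively, this is the squaring map, whose kernel $\mu_2(k)=\Z/2$ and cokernel $k^*/k^{*2}$ populate the adjacent degrees $m$ odd and $m$ even respectively; this single mechanism produces the entire alternation between $\Z/2$ and $k^*/k^{*2}$ in the statement. At the top of the cone the remaining map has the form $k^*/k^{*2}\to k^*$, whose image necessarily lies in $\mu_2(k)=\{\pm1\}$; one must show it is zero, so that a freshly entering $k^*$ survives at $m=n+1$ for even $n$. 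For $k$ quadratically closed this is automatic since $k^*/k^{*2}=0$, and for $k$ formally real it is the analogue of the vanishing of the map $\alpha\colon k^*\to\Z/2$ in Theorem \ref{com}, argued through the class of $-1$.

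The step I expect to be the main obstacle is precisely the vanishing of this last connecting map for \emph{all} formally real fields rather than only euclidean ones. The a priori candidate is the Hilbert-symbol pairing $x\mapsto(-1,x)_2$, which is genuinely nonzero (for instance over $\R$, where $(-1,-1)_2\neq 0$); since the stated answer is $k^*$ at $m=n+1$, one must prove the geometric connecting map is \emph{not} this pairing but is in fact $0$. I would attempt this by showing, via Proposition \ref{comp} and the fold/pinch comparison, that $\delta$ factors through multiplication by $2$ on its $2$-torsion source $k^*/k^{*2}$ and therefore vanishes. Verifying this factorization, and keeping the even/odd bookkeeping at $m=n,n+1$ consistent across the induction (so that the ranges $2<m\le n$ and $2<m\le n+1$ and the position of the surviving $k^*$ come out as stated), is the one place demanding care beyond the positive-cone computation.
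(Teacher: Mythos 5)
Your skeleton is the paper's own proof: induction on $n$ through the long exact sequence \ref{connec}, all content concentrated at $m=n,n+1$, the $k^*\to k^*$ connecting maps identified as multiplication by $2$ via Proposition \ref{comp}, and the alternation $\Z/2$ (odd $m$) / $k^*/k^{*2}$ (even $m$) arising as kernel and cokernel of squaring. Your base cases and bookkeeping are right (the $n=1$ vanishing is established in the proof of Proposition \ref{prop} rather than Theorem \ref{com}, but that is cosmetic), and you have correctly isolated the one genuinely new point: the vanishing of the connecting map $\delta\colon k^*/k^{*2}\to k^*$ at the top of the cone, for \emph{every} formally real field and not merely euclidean ones.

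However, your proposed mechanism for that vanishing is a genuine gap. Proposition \ref{comp} does not give a factorization of $\delta$ through multiplication by $2$ on its source; it says that the composite $p_*\circ\delta\colon k^*/k^{*2}\to k^*\to k^*/k^{*2}$ (connecting map followed by the map induced by $C_{2+}\to S^0$) equals multiplication by $2$, hence is zero because the source is $2$-torsion. By itself this only yields $\mathrm{Im}(\delta)\subseteq\ker(p_*)$. Read literally, ``$\delta$ factors through multiplication by $2$ on the $2$-torsion source'' is equivalent to the conclusion $\delta=0$, so there is nothing to ``verify'' short of the theorem itself; read instead as ``$p_*\circ\delta=0$ implies $\delta=0$,'' it is false, since $p_*$ is far from injective. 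A telltale sign is that your mechanism never invokes formal reality, so if it were valid it would prove the vanishing over every field of characteristic zero, which is precisely what the paper avoids claiming. The paper's actual argument assembles three facts: (i) by the previous stage of the induction, $p_*\colon H^{1,1}(k)=k^*\to H^{n-(n-1)\sigma,1}_{C_2}(k)=k^*/k^{*2}$ is the cokernel projection of multiplication by $2$, so $\ker(p_*)=k^{*2}$ and hence $\mathrm{Im}(\delta)\subseteq k^{*2}$; (ii) the $2$-torsion source forces $\mathrm{Im}(\delta)\subseteq\mu_2=\{\pm1\}$ (your observation); (iii) formal reality gives $-1\notin k^{*2}$, so $k^{*2}\cap\mu_2=\{1\}$ and $\delta=0$ (for quadratically closed $k$ the source is already zero). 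This also answers your Hilbert-symbol worry: a map such as $x\mapsto(-1,x)_2$ has image $\{\pm1\}\not\subseteq k^{*2}$ for formally real $k$, so (i) rules it out. And it explains the asymmetry between the two theorems: in the positive cone (Theorem \ref{com}) the analogous map is $k^*\to\Z/2$, where one needs $[k^*:k^{*2}]=2$ (euclidean) to conclude, whereas here the intersection argument works for all formally real fields.
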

\begin{proof} 

We have that $H^{m-n\sigma,1} _{C _2}(k,\Z)=H^{m,1}_{C_2}(k,\Z)=0$ for any $m\leq 0$ and $n\geq 0$. We have 
$$0\rightarrow H^{1-\sigma,1} _{C _2}(k,\Z)=0\rightarrow H^{1,1} _{C _2}(k,\Z)=k^*\simeq H^{1,1}(k,\Z)=k^*\rightarrow H^{2-\sigma,1} _{C _2}(k,\Z)=0\rightarrow 0,$$
from the proof of Proposition \ref{prop}. It implies that $H^{m-\sigma,1} _{C _2}(k,\Z)=0$ for any $m$. We have 
$$H^{2-\sigma,1} _{C _2}(k,\Z)=0\rightarrow H^{1,1}(k,\Z)\rightarrow H^{3-2\sigma,1} _{C _2}(k,\Z)\rightarrow 0$$
that implies $H^{3-2\sigma,1} _{C _2}(k,\Z)=k^*$ and $H^{m-2\sigma,1} _{C _2}(k,\Z)=0$ for $m \neq 3$. It also implies that $H^{2-n\sigma,1} _{C _2}(k,\Z)=0$ for any $n\geq 1$.

 We have $$0\rightarrow H^{3-3\sigma,1} _{C _2}(k,\Z)\rightarrow H^{3-2\sigma,1} _{C _2}(k,\Z)=k^*\rightarrow H^{1,1}(k,\Z)=k^*\rightarrow H^{4-3\sigma,1} _{C _2}(k,\Z)\rightarrow 0$$
and the middle map is multiplication by 2 because the composition $$H^{3-2\sigma,1} _{C _2}(k,\Z)=k^*\rightarrow H^{1,1}(k,\Z)=k^*\simeq H^{3-2\sigma,1} _{C _2}(k,\Z)=k^*$$ is multiplication by 2 from Proposition \ref{comp}.

%because from the motivic isotropy sequence and the $(2\sigma-2,\sigma-1)$-periodicity of Bredon motivic cohomology of $\EG C _2$ we have  $H^{4-3\sigma,1} _{C _2}(k,\Z)=H^{4-3\sigma,1} _{C _2}(\EG C _2,\Z)=H^{2-\sigma,\sigma} _{C _2}(\EG C _2,\Z)=k^*/k^{*2}=H^{2-\sigma,\sigma} _{C _2}(k,\Z)$ and $H^{3-3\sigma,1} _{C _2}(k,\Z)=H^{3-3\sigma,1}(\EGt C _2,\Z)=\Z/2$.
In conclusion we obtain $H^{3-3\sigma,1} _{C _2}(k,\Z)=\Z/2$ and $H^{4-3\sigma,1} _{C _2}(k,\Z)=k^*/k^{*2}$ and $H^{n-3\sigma,1} _{C _2}(k,\Z)=0$ for $n\neq 3,4$. 

If $n=4$ we have that $H^{m-4\sigma,1} _{C _2}(k,\Z)=H^{m-3\sigma,1}(k,\Z)$ for any $m\leq 3$ or $m\geq 6$. For the rest of the cases we have the following l.e.s.
  $$0\rightarrow H^{4-4\sigma,1} _{C _2}(k,\Z)\rightarrow H^{4-3\sigma,1} _{C _2}(k,\Z)=k^*/k^{*2}\stackrel{\alpha _1}{\rightarrow} H^{1,1}(k,\Z)=k^*\rightarrow H^{5-4\sigma,1} _{C _2}(k,\Z)\rightarrow 0.$$
The composition $S^{\sigma}\rightarrow C _{2+}\wedge S^1\rightarrow S^{\sigma}$ gives multiplication by 2 (i.e. zero map) on the composition
  $$H^{4-3\sigma,1} _{C _2}(k,\Z)=k^*/k^{*2}\stackrel{\alpha _1}{\rightarrow} H^{1,1}(k,\Z)=k^*\rightarrow H^{4-3\sigma,1} _{C _2}(k,\Z)=k^*/k^{*2}$$
  from  Theorem \ref{comp} and the second map is surjective because $H^{1,1}(k,\Z)=k^*\rightarrow H^{4-3\sigma,1} _{C _2}(k,\Z)=k^*/k^{*2}\rightarrow H^{4-2\sigma} _{C _2}(k,\Z)=0$. The  map $\alpha _1$ is zero when $k$ is a quadratically closed field. Suppose $k$ is a formally real field.  Then $\alpha^2 _1(x)=1$ which implies  $Im(\alpha _1)\subset \Z/2$. Because  $Im(\alpha _1)\subset k^{*2}$ and $k$ is a formally real field it implies that $\alpha _1$ is the zero map. 
  
  We conclude that $H^{4-4\sigma,1} _{C _2}(k,\Z)=k^*/k^{*2}$,  $H^{3-4\sigma,1} _{C _2}(k,\Z)=H^{3-3\sigma,1} _{C _2}(k,\Z)=\Z/2$, $H^{5-4\sigma,1} _{C _2}(k,\Z)=k^*$ and $H^{n-4\sigma,1} _{C _2}(k,\Z)=0$ for $n\leq 2$ and $n\geq 6$. 

If $n=5$ we have the following sequence 
$$0\rightarrow H^{5-5\sigma,1} _{C _2}(k,\Z)\rightarrow H^{5-4\sigma,1} _{C _2}(k,\Z)=k^*\stackrel{\alpha _2}{\rightarrow} H^{1,1}(k,\Z)=k^*\rightarrow H^{6-5\sigma,1} _{C _2}(k,\Z)\rightarrow 0.$$
From the above we have that $$0\rightarrow H^{1,1}(k,\Z)=k^*\simeq H^{5-4\sigma,1} _{C _2}(k,\Z)=k^*\rightarrow H^{5-3\sigma,1} _{C _2}(k,\Z)=0$$ so by Theorem \ref{comp} we obtain that $\alpha _2$ is multiplication by 2. We conclude that $H^{5-5\sigma,1} _{C _2}(k,\Z)=\Z/2$ and $H^{6-5\sigma,1} _{C _2}(k,\Z)=k^*/k^{*2}$. We also have 
$H^{n-5\sigma,1} _{C _2}(k,\Z)=H^{n-4\sigma,1} _{C _2}(k,\Z)$ for $n\neq 5,6$.
 
By induction  we have the following:

 If $n>2$ is even then $H^{n+1-n\sigma,1} _{C _2}(k,\Z)=k^*$ and $H^{m-n\sigma,1} _{C _2}(k,\Z)=k^*/k^{*2}$ for $2<m\leq n$ even and  $H^{m-n\sigma,1} _{C _2}(k,\Z)=\Z/2$ with $2<m<n$, $m$ odd. 
 
 If $n>2$ is odd then $H^{m-n\sigma,1} _{C _2}(k,\Z)=k^*/k^{*2}$ for $2<m\leq n+1$, $m$ even and $H^{m-n\sigma,1} _{C _2}(k,\Z)=\Z/2$ if $2<m\leq n+1$, $m$ odd.
 
 The case $n=0$ follows from the computation of motivic cohomology of a field in weight 1.
   \end{proof}
    \begin{corollary}\label{fcoe} Let $k$ be a field of characteristic zero. If $k$ is a euclidian field or a quadratically closed field we have that the positive cone ($n\geq 0$) is
 $$
H^{n\sigma-m,1}_{C _2}(k, \Z/2) =
\begin{cases}
k^*/k^{*2}\oplus\Z/2 & 0\leq m\leq n-1\\
\Z/2&  0\leq m=n\\
k^*/k^{*2} & m=-1<n\\
0& otherwise\\ 
\end{cases}.
$$
and the negative cone ($-n<0$)
$$
H^{m-n\sigma,1}_{C _2}(k, \Z/2) =
\begin{cases}
k^*/k^{*2}\oplus\Z/2 & 3\leq m\leq n\\
\Z/2&  m=2\leq n\\
k^*/k^{*2} & m=n+1, n\geq 2\\
0& otherwise\\ 
\end{cases}.
$$
 \end{corollary}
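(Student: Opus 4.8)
The plan is to deduce this corollary purely formally from the integral computations of Theorem \ref{com} (positive cone) and Theorem \ref{neg1} (negative cone), via the split universal coefficient sequence already employed in Proposition \ref{sigma} and Proposition \ref{prop}. The short exact sequence of coefficients $0\to\Z\xrightarrow{2}\Z\to\Z/2\to 0$ induces a cofiber sequence $\MZ\xrightarrow{2}\MZ\to\M{\Z/2}$, whose associated long exact (Bockstein) sequence splits into short exact sequences $0\to H^{V}_{C_2}(k,\Z)\otimes\Z/2\to H^{V}_{C_2}(k,\Z/2)\to {}_2H^{V+1}_{C_2}(k,\Z)\to 0$ for every cohomology index $V$, where $V+1$ denotes raising the trivial-representation part of the index by one. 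Thus it suffices to add, in each index, the mod-$2$ reduction of the integral group sitting in that index and the $2$-torsion of the integral group sitting one trivial degree higher.

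First I would record the three elementary facts that drive the bookkeeping: since $\ch k=0$ one has ${}_2k^*=\{\pm1\}\cong\Z/2$ and $k^*\otimes\Z/2=k^*/k^{*2}$, while $k^*/k^{*2}$ and $\Z/2$ are $\mathbb{F}_2$-vector spaces, hence fixed by both $(-)\otimes\Z/2$ and ${}_2(-)$. In the positive cone, with $V=n\sigma-m$, the torsion summand is read off $H^{n\sigma-(m-1),1}_{C_2}(k,\Z)$; in the negative cone, with $V=m-n\sigma$, it is read off $H^{(m+1)-n\sigma,1}_{C_2}(k,\Z)$. I would then run the case analysis over the parity of $n$ and the residue of $m$, taking the two summands directly from Theorems \ref{com} and \ref{neg1}. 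For instance, in the positive cone with $n$ even and $0\le m\le n-1$, the reduction summand supplies $\Z/2$ (from the integral $\Z/2$ at even $m$) or $k^*/k^{*2}$ (from $k^*$ at $m=n-1$), while the torsion summand supplies the complementary factor from the neighbouring index, assembling to $k^*/k^{*2}\oplus\Z/2$; at the top index $m=n$ the reduction summand vanishes while the torsion summand is ${}_2k^*=\Z/2$, and at $m=-1$ only the reduction summand $k^*/k^{*2}$ survives. The negative cone is handled identically, with the endpoints $m=2$ and $m=n+1$ playing the roles of $m=n$ and $m=-1$.

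The hard part will not be any single computation but organizing the endpoint transitions so that each case is covered exactly once. The delicate indices are precisely those where the integral group equals $k^*$: its free part contributes $k^*/k^{*2}$ to the reduction summand in its own index, whereas its $2$-torsion $\{\pm1\}$ contributes a $\Z/2$ to the \emph{adjacent} index through the connecting map. This is what produces the asymmetry between the boundary values ($m=n$ and $m=-1$ in the positive cone; $m=2$ and $m=n+1$ in the negative cone) and the interior ranges, and it must be verified separately in the even-$n$ and odd-$n$ formulas of each theorem, since the position of the $k^*$ entry differs between them. I would also make explicit the mild hypothesis matching: a euclidean field is formally real, so Theorem \ref{neg1} applies, and Theorem \ref{com} covers euclidean and quadratically closed fields directly, so the combined hypothesis of the corollary is legitimate. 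Once the splitting of the universal coefficient sequence is granted, as in Proposition \ref{sigma}, no further input is required.
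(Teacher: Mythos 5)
Your proposal is correct and follows essentially the same route as the paper: the paper deduces this corollary (just as it explicitly does for the weight-$\sigma$ analogue, Corollary \ref{sif}) from the split universal coefficient sequence $0\rightarrow H^{V}_{C_2}(k,\Z)\otimes\Z/2\rightarrow H^{V}_{C_2}(k,\Z/2)\rightarrow {_2H}^{V+1}_{C_2}(k,\Z)\rightarrow 0$ applied to the integral computations of Theorems \ref{com} and \ref{neg1}, with exactly the bookkeeping you describe (${_2}k^*=\{\pm1\}$, $k^*\otimes\Z/2=k^*/k^{*2}$). The only point worth adding is that the $n=0$ row of the positive cone is not in Theorem \ref{com} but is supplied by the last clause of Theorem \ref{neg1} (equivalently Proposition \ref{prop}), which your scheme handles without change.
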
 
 We conclude that for a quadratically closed field the Bredon motivic cohomology of weight 1 with $\Z/2-$coefficients coincides as abstract groups with Bredon cohomology of a point. For a euclidian field like $\R$ we have that $k^*/k^{*2}=\Z/2$.

The negative cone of Bredon motivic cohomology of weight $\sigma$ of a formally real field or a quadratically closed field is computed below. For a quadratically closed field it coincides with the negative cone in weight 1 (see Theorem \ref{neg1}). It doesn't coincide for a formally real field.
\begin{theorem} \label {sigm}
 Suppose that $k$ is a quadratically closed or a formally real field  ($char(k)=0$). Then for $n\geq 1$ we have:

If $n\geq 2$ is even  then
 $$
H^{m-n\sigma,\sigma}_{C _2}(k, \Z) =
\begin{cases}
k^*& m=n+1\\
k^*/k^{*2}& 2\leq m\leq n, m=\text{even}\\
\Z/2 & 2<m\leq n, m=\text{odd} \\
0& otherwise\\ 
\end{cases}.
$$
If $n\geq 1$ is odd then 
$$
H^{m-n\sigma,\sigma}_{C _2}(k, \Z) =
\begin{cases}
k^*/k^{*2}& 2\leq m\leq n+1, m=\text{even}\\
\Z/2 & 2<m\leq n, m=\text{odd} \\
0& otherwise\\ 
\end{cases}.
$$
\end{theorem}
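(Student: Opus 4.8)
The plan is to run the same inductive machine as in the proof of Theorem \ref{neg1}, but now feeding in the weight-$\sigma$ base computations of Proposition \ref{sigma} and Proposition \ref{2q} in place of the weight-$1$ ones. Throughout I would use the long exact sequence \ref{connec} attached to $C_{2+}\to S^0\to S^\sigma$, specialized to weight index $b+q\sigma=\sigma$ (so $b=0$, $q=1$); the motivic terms occurring there are the weight-$1$ motivic cohomology groups of $k$, which vanish except for $H^{1,1}_{\mcal{M}}(k,\Z)=k^*$. This concentration forces most of the maps $i_*\colon H^{a-n\sigma,\sigma}_{C_2}(k,\Z)\to H^{a-(n-1)\sigma,\sigma}_{C_2}(k,\Z)$ (the surrounding motivic groups being zero) to be isomorphisms, so the bulk of the $p=-n$ line is the $p=-(n-1)$ line shifted down, and the genuine content of the induction lives near cohomology indices $a=n$ and $a=n+1$.

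First I would settle the base case $p=-1$. Feeding the $p=0$ line $H^{m,\sigma}_{C_2}(k,\Z)$ (which is $k^{*2}$ at $m=1$ and $0$ otherwise, by Proposition \ref{sigma}b)) into \ref{connec}, the only nonzero output is at $a=2$, where one gets $H^{2-\sigma,\sigma}_{C_2}(k,\Z)=\coker\bigl(\delta\colon H^{1,\sigma}_{C_2}(k,\Z)=k^{*2}\to H^{1,1}_{\mcal{M}}(k,\Z)=k^*\bigr)$. Identifying $\delta$ with the inclusion $k^{*2}\hookrightarrow k^*$ (via Proposition \ref{comp}, exactly as in the proof of Proposition \ref{sigma}) yields $H^{2-\sigma,\sigma}_{C_2}(k,\Z)=k^*/k^{*2}$, with all other groups on this line vanishing. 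This already exhibits the discrepancy with weight $1$ at $m=2$.

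For the inductive step from $p=-(n-1)$ to $p=-n$, the sequence \ref{connec} collapses, away from $a\in\{n,n+1\}$, to the isomorphisms $i_*$ above, so those entries agree with the $(n-1)$-line and reproduce the stated pattern. At the boundary one is left with the four-term exact sequence
$$0\to H^{n-n\sigma,\sigma}_{C_2}(k,\Z)\to H^{n-(n-1)\sigma,\sigma}_{C_2}(k,\Z)\xrightarrow{\delta} H^{1,1}_{\mcal{M}}(k,\Z)=k^*\xrightarrow{p_*} H^{n+1-n\sigma,\sigma}_{C_2}(k,\Z)\to 0,$$
so that $H^{n-n\sigma,\sigma}_{C_2}(k,\Z)=\ker\delta$ and $H^{n+1-n\sigma,\sigma}_{C_2}(k,\Z)=\coker\delta$, and everything hinges on $\delta$. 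When $n$ is odd one has $H^{n-(n-1)\sigma,\sigma}_{C_2}(k,\Z)=k^*$, and Proposition \ref{comp} (together with the fold/pinch diagram used in Theorems \ref{com} and \ref{neg1}) identifies $\delta$ with multiplication by $2$, giving kernel $\mu_2(k)=\Z/2$ and cokernel $k^*/k^{*2}$, as required.

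The hard part will be the even case: here $H^{n-(n-1)\sigma,\sigma}_{C_2}(k,\Z)=k^*/k^{*2}$ is $2$-torsion, so $\delta$ a priori lands in the $2$-torsion $\{\pm1\}\subset k^*$, and I must show $\delta=0$ — this is precisely what produces $k^*/k^{*2}$ at $m=n$ and $k^*$ at $m=n+1$ and makes the weight-$\sigma$ negative cone differ from the weight-$1$ one. For a quadratically closed field this is automatic since $k^*/k^{*2}=0$. For a formally real field I would argue as in the proofs of Theorems \ref{com} and \ref{neg1}: the squaring behaviour forced by Proposition \ref{comp} puts the image of $\delta$ inside $k^{*2}$, while torsion considerations put it inside $\{\pm1\}$; since $-1\notin k^{*2}$ for a formally real field, the image is trivial and $\delta=0$. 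Assembling the two boundary contributions with the shifted bulk and running the induction on $n$ then yields both the even and odd formulas.
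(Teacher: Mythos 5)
Your inductive step is essentially the paper's own proof of Theorem \ref{sigm}: away from the boundary indices the sequence \ref{connec} degenerates to isomorphisms, and at the boundary the paper runs exactly your dichotomy. Note why the odd case is rigorous there: by induction the transfer $H^{1,1}(k,\Z)\to H^{n-(n-1)\sigma,\sigma}_{C_2}(k,\Z)$ from the previous line is an \emph{isomorphism}, and composing Proposition \ref{comp} with that isomorphism pins the next connecting map down as multiplication by $2$ with no ambiguity; in the even case the image of the connecting map is trapped in $\{\pm1\}\cap k^{*2}$, which is trivial for quadratically closed or formally real $k$. Up to this point your proposal and the paper agree.

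The gap is in your base case $p=-1$. You claim Proposition \ref{comp} identifies $\beta_1\colon H^{1,\sigma}_{C_2}(k,\Z)=k^{*2}\to H^{1,1}(k,\Z)=k^*$ with the inclusion, ``exactly as in the proof of Proposition \ref{sigma}''; but that proof analyzes the surjection $k^*\to H^{1,\sigma}_{C_2}(k,\Z)$ \emph{into} the group (kernel $\{\pm1\}$), not the map $\beta_1$ out of it, and Proposition \ref{comp} only controls the composite that starts and ends at the \emph{Bredon} group, i.e.\ $tr\circ\beta_1=\times 2$ on $H^{1,\sigma}_{C_2}(k,\Z)\cong k^*/\{\pm1\}$. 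Unlike in the inductive steps, here $tr$ is not an isomorphism, so this relation determines $\beta_1$ only up to a sign character: $\beta_1([x])=\eta(x)x^2$ with $\eta\colon k^*\to\{\pm1\}$ trivial on squares and on $-1$. That is enough for injectivity over the fields considered, but it does \emph{not} give $\mathrm{im}\,\beta_1=k^{*2}$, which is what you need for $H^{2-\sigma,\sigma}_{C_2}(k,\Z)=\coker\beta_1=k^*/k^{*2}$; for a formally real field such as $\Q$ a nontrivial $\eta$ would even put $4$-torsion in the cokernel. What your argument actually requires is the opposite composite --- that restriction-after-transfer is squaring on $H^{1,1}(k,\Z)=k^*$ (the ``$1+\mathrm{conjugation}$'' identity) --- and that statement is neither Proposition \ref{comp} nor proved anywhere in the paper. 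The paper's proof takes a genuinely different route here: it shows $H^{1-\sigma,\sigma}_{C_2}(k,\Z)=0$ by identifying it with $H^{3-3\sigma,1}_{C_2}(\EG C_2,\Z)$ via Proposition \ref{borel} and the $(2\sigma-2,\sigma-1)$-periodicity of $\EG C_2$, and then kills that group using the motivic isotropy sequence, Proposition \ref{case1}, and the weight-$1$ computation $H^{3-3\sigma,1}_{C_2}(k,\Z)=\Z/2$ from Theorem \ref{neg1}. So Theorem \ref{neg1} is a real input to the weight-$\sigma$ theorem that your proposal silently discards; to repair the base case you must either prove the transfer--restriction identity on the motivic side or fall back on the paper's Borel-cohomology argument.
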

\begin{proof}We have that $H^{n\sigma+m,\sigma}_{C _2}(k,\Z)=H^{2\sigma+m,\sigma}(k,\Z)=0$ from l.e.s. for any $n\geq 2$ and $m\geq 0$. We also have 
$$H^{-m,1}_{C _2}(k,\Z)\rightarrow H^{-m\sigma,\sigma}_{C _2}(k,\Z)\rightarrow H^{(-m+1)\sigma,\sigma}_{C _2}(k,\Z)\rightarrow H^{-m+1,1}_{C _2}(k,\Z).$$
We have $H^{-m\sigma,\sigma}_{C _2}(k,\Z)=0=H^{0,\sigma}_{C _2}(k,\Z)=0$ for any $m\geq 0$. Also $H^{n-m\sigma,\sigma}_{C _2}(k,\Z)=0$ for any $n\leq 0$ and $m\geq 0$. 
We have that 
$$H^{0,1}_{C _2}(k,\Z)=0\rightarrow H^{1-\sigma,\sigma}_{C _2}(k,\Z)\rightarrow H^{1,\sigma}_{C _2}(k,\Z)=k^{*2}\stackrel{\beta _1}\rightarrow $$
$$\rightarrow H^{1,1}_{C _2}(k,\Z)=k^*\rightarrow H^{2-\sigma,\sigma}_{C _2}(k,\Z)\rightarrow H^{2,\sigma}_{C _2}(k,\Z)=0.$$
The map $\beta _1$ is an injective map because $H^{1-\sigma,\sigma}_{C _2}(k,\Z)=0$. This follows from $H^{1-\sigma,\sigma}_{C _2}(k,\Z)=H^{1-\sigma,\sigma} _{C _2}(\EG \Z/2,\Z)=H^{3-3\sigma,1} _{C _2}(\EG \Z/2, \Z)$ and the motivic isotropy sequence gives
$$H^{2-3\sigma,1} _{C _2}(\EG\Z/2,\Z)=H^{0,\sigma} _{C _2}(k,\Z)=0\rightarrow \Z/2\rightarrow H^{3-3\sigma,1} _{C _2}(k,\Z)=\Z/2\rightarrow H^{3-3\sigma,1} _{C _2}(\EG \Z/2,\Z)\rightarrow 0.$$
It implies that $H^{3-3\sigma,1} _{C _2}(\EG \Z/2,\Z)=0$. 

In conclusion $\beta _1$ is injective and $H^{2-\sigma,\sigma}_{C _2}(k,\Z)=k^*/k^{*2}$. We conclude that $H^{n-\sigma,\sigma}_{C _2}(k,\Z)=k^*/k^{*2}$ for $n=2$ and zero otherwise. 
We also notice that $$H^{1-\sigma,\sigma}_{C _2}(k,\Z)\simeq H^{1-n\sigma,\sigma}_{C _2}(k,\Z)=0$$ for any $n\geq 1$.

We have
$$0\rightarrow H^{2-2\sigma,\sigma}_{C _2}(k,\Z)\rightarrow H^{2-\sigma,\sigma}_{C _2}(k,\Z)=k^*/k^{*2}\stackrel{\tau}{\rightarrow}H^{1,1}_{C _2}(k,\Z)=k^*\rightarrow H^{3-2\sigma,\sigma}_{C _2}(k,\Z)\rightarrow  H^{3-\sigma,\sigma}_{C _2}(k,\Z)=0$$
so $H^{2-2\sigma,\sigma}_{C _2}(k,\Z)=k^*/k^{*2}$ and $H^{3-2\sigma,\sigma}_{C _2}(k,\Z)=k^*$ because the map $\tau$ is the zero map because $k$ is either a formally real field or a quadratically closed field.

We also have $H^{n-2\sigma,\sigma}_{C _2}(k,\Z)=0$ for any $n\neq 2, 3$.  

We have 
$$0\rightarrow H^{3-3\sigma,\sigma}_{C _2}(k,\Z)\rightarrow H^{3-2\sigma,\sigma}_{C _2}(k,\Z)=k^*\stackrel{\tau _1}{\rightarrow} H^{1,1}_{C _2}(k,\Z)=k^*\rightarrow H^{4-3\sigma,\sigma}_{C _2}(k,\Z)\rightarrow 0$$
and the map $\tau _1$ is the multiplication by 2 as above so $H^{3-3\sigma,\sigma}_{C _2}(k,\Z)=\Z/2$ and $H^{4-3\sigma,\sigma}_{C _2}(k,\Z)=k^*/k^{*2}$. Also $H^{2-3\sigma,\sigma}_{C _2}(k,\Z)=H^{2-2\sigma,\sigma}_{C _2}(k,\Z)=k^*/k^{*2}$ and $H^{n-3\sigma,\sigma}_{C _2}(k,\Z)=0$ if $n\neq 2,3,4$. 

For $n=4$ we have 
$$0\rightarrow H^{4-4\sigma,\sigma}_{C _2}(k,\Z)\rightarrow H^{4-3\sigma,\sigma}_{C _2}(k,\Z)=k^*/k^{*2}\stackrel{\tau _2}{\rightarrow} H^{1,1}_{C _2}(k,\Z)=k^*\rightarrow H^{5-4\sigma,\sigma}_{C _2}(k,\Z)\rightarrow H^{5-3\sigma}_{C _2}(k,\Z)=0$$
so $H^{4-4\sigma,\sigma}_{C _2}(k,\Z)=k^*/k^{*2}$ and $H^{5-4\sigma,\sigma}_{C _2}(k,\Z)=k^*$. This is because the map $\tau _2$ is the zero map as above. Also $H^{3-4\sigma,\sigma}_{C _2}(k,\Z)=H^{3-3\sigma,\sigma}_{C _2}(k,\Z)=\Z/2$, $H^{2-4\sigma,\sigma}_{C _2}(k,\Z)=H^{2-3\sigma,\sigma}_{C _2}(k,\Z)=k^*/k^{*2}$ and the rest are zero. 

By induction we have that if $n$ is even then $H^{2-n\sigma,\sigma}_{C _2}(k,\Z)=k^*/k^{*2}$, $H^{3-n\sigma,\sigma}_{C _2}(k,\Z)=\Z/2$,..., $H^{(n-1)-n\sigma,\sigma}_{C _2}(k,\Z)=\Z/2$, $H^{n-n\sigma,\sigma}_{C _2}(k,\Z)=k^*/k^{*2}$, $H^{n+1-n\sigma,\sigma}_{C _2}(k,\Z)=k^*$.
If $n$ odd then $H^{2-n\sigma,\sigma}_{C _2}(k,\Z)=k^*/k^{*2}$, $H^{3-n\sigma,\sigma}_{C _2}(k,\Z)=\Z/2$,..., $H^{n-1-n\sigma,\sigma}_{C _2}(k,\Z)=k^*/k^{*2}$, $H^{n-n\sigma,\sigma}_{C _2}(k,\Z)=\Z/2$, $H^{n+1-n\sigma,\sigma}_{C _2}(k,\Z)=k^*/k^{*2}$.
\end{proof}
In the following theorem we compute the positive cone of Bredon motivic cohomology of weight $\sigma$ of a quadratically closed field and a euclidian field.
\begin{theorem} \label{pos1} Let $k$ be a quadratically closed field or a euclidian field. Then if $n\geq 2$ is even 
$$
H^{n\sigma-m,\sigma}_{C _2}(k, \Z) =
\begin{cases}
k^*& m=n-1\\
k^*/k^{*2}& 1\leq m<n-1, m=\text{odd}\\
\Z/2 & 0\leq m<n-1, m=\text{even} \\
0& otherwise.\\ 
\end{cases}.
$$
If $n\geq 1$ is odd then 
$$
H^{n\sigma-m,\sigma}_{C _2}(k, \Z) =
\begin{cases}
k^*/k^{*2}& 1\leq m<n-1, m=\text{odd}\\
\Z/2 & 0\leq m\leq n-1, m=\text{even} \\
0& otherwise\\ 
\end{cases}.
$$
If $n=0$ then $H^{1,\sigma}(k,\Z)=k^{*2}$ and $H^{m,\sigma}(k,\Z)=0$ for any $m\neq 1$.
\end{theorem}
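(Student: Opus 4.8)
The plan is to run the same inductive bookkeeping on the $\sigma$-slices that produced the positive cone in weight $1$ (Theorem \ref{com}), but now feeding in the weight-$\sigma$ base groups. The case $n=0$ is exactly Proposition \ref{sigma} b), and the cases $n=1,2$ are recorded in Proposition \ref{sigma} a) and Proposition \ref{2q} a): these give $H^{\sigma,\sigma}_{C_2}(k,\Z)=\Z/2$, $H^{2\sigma,\sigma}_{C_2}(k,\Z)=\Z/2$ and $H^{2\sigma-1,\sigma}_{C_2}(k,\Z)=k^*$ together with the vanishing of all neighbouring groups, which start the induction.

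The engine is the long exact sequence \ref{connec} in weight $\sigma$ (so $b=0,q=1$), which for fixed $p=n-1$ interleaves the $(n-1)$-slice and the $n$-slice with the weight-$1$ motivic cohomology of $k$ in between. Since $H^{j,1}(k)$ vanishes except for $H^{1,1}(k)=k^*$, for every $m\neq n-1,n-2$ both flanking motivic groups vanish and $i_*$ is an isomorphism
$$H^{(n-1)\sigma-m,\sigma}_{C_2}(k)\iso H^{n\sigma-m,\sigma}_{C_2}(k),$$
which transports the inductively known groups into the new slice. In particular the overshoot positions $m=-1$ and $m\leq -2$ are carried all the way down to $H^{2\sigma+j,\sigma}_{C_2}(k)=0$ (Proposition \ref{2q} a)), explaining why the weight-$\sigma$ answer vanishes at $m=-1$ where the weight-$1$ answer of Theorem \ref{com} had a copy of $k^*/k^{*2}$; similarly the positions $m\geq n$ are isomorphic to vanishing groups.

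All the content is therefore concentrated in the five-term exact sequence coming from the two special values $m=n-2,\,n-1$,
\begin{gather*}
0\to H^{(n-1)\sigma-(n-1),\sigma}_{C_2}(k)\xrightarrow{i_*} H^{n\sigma-(n-1),\sigma}_{C_2}(k)\xrightarrow{\delta}k^*\\
\xrightarrow{p_*}H^{(n-1)\sigma-(n-2),\sigma}_{C_2}(k)\xrightarrow{i_*}H^{n\sigma-(n-2),\sigma}_{C_2}(k)\to 0,
\end{gather*}
whose outer Bredon groups are known from the inductive hypothesis: the slice-$(n-1)$ formula gives $H^{(n-1)\sigma-(n-1),\sigma}_{C_2}(k)=0$, while $H^{(n-1)\sigma-(n-2),\sigma}_{C_2}(k)$ equals $\Z/2$ when $n$ is even and $k^*$ when $n$ is odd. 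Thus everything comes down to the single map $p_*\colon k^*\to H^{(n-1)\sigma-(n-2),\sigma}_{C_2}(k)$. When $n$ is odd the target is $k^*$ and I would identify $p_*$ with multiplication by $2$ exactly as the maps $\psi,\tau_1$ were treated in Theorems \ref{com} and \ref{sigm}: killing the neighbouring terms produces an auxiliary isomorphism of the target with $k^*$, and the fold/pinch diagram (whose vertical forget-action maps are isomorphisms in these degrees) together with Proposition \ref{comp} shows the bottom composite is $\times 2$; this yields $\ker p_*=\mu_2(k)=\Z/2$ and $\coker p_*=k^*/k^{*2}$, giving $H^{n\sigma-(n-1),\sigma}_{C_2}(k)=\Z/2$ and $H^{n\sigma-(n-2),\sigma}_{C_2}(k)=k^*/k^{*2}$. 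When $n$ is even the target is $\Z/2$ and the crux is to prove that $\alpha:=p_*\colon k^*\to\Z/2$ is the zero map: reproducing the fold/pinch square of Theorem \ref{com} shows $\alpha(x)^2=1$ and $\mathrm{Im}\,\alpha\subset k^{*2}$, so $\alpha=0$ when $k$ is quadratically closed (as $k^*=k^{*2}$), while if $k$ is euclidian a surjective $\alpha$ would make $\ker\alpha=k^{*2}$ an index-$2$ subgroup containing $-1$, contradicting formal reality. With $\alpha=0$ the sequence forces $H^{n\sigma-(n-1),\sigma}_{C_2}(k)=k^*$ and $H^{n\sigma-(n-2),\sigma}_{C_2}(k)=\Z/2$.

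Feeding these conclusions back into \ref{connec} advances the induction, the even and odd parities differing only in which of the two special positions carries $k^*$, exactly as in Theorems \ref{com}, \ref{neg1} and \ref{sigm}. I expect the main obstacle to be precisely the behaviour of the map out of $k^*$: proving $\alpha=0$ for euclidian fields and pinning down the $\times 2$ connecting maps through the correct fold/pinch diagrams, while keeping the parity bookkeeping straight. Once these are in place the remaining positions are forced by the $i_*$-isomorphisms and the computation is complete.
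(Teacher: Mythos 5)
Your proposal is correct and follows essentially the same route as the paper's proof: the same base cases (Propositions \ref{sigma} and \ref{2q}), the same use of the long exact sequence \ref{connec} to get $i_*$-isomorphisms away from the positions $m=n-2,n-1$, and the same analysis of the one nontrivial map $p_*\colon k^*\to H^{(n-1)\sigma-(n-2),\sigma}_{C_2}(k)$ -- identified as multiplication by $2$ via the auxiliary isomorphism and Proposition \ref{comp} when $n$ is odd (the paper's maps $\gamma,\beta$), and shown to vanish for quadratically closed or euclidian $k$ when $n$ is even (the paper's map $\alpha$), followed by the same induction. One transcription slip worth fixing: since $\alpha$ here maps $k^*$ into $\Z/2$, the facts supplied by the fold/pinch diagram and Proposition \ref{comp} are $k^{*2}\subset\ker\alpha$ and $-1\in\ker\alpha$ (not ``$\alpha(x)^2=1$ and $\mathrm{Im}\,\alpha\subset k^{*2}$'', which is the phrasing from the map of Theorem \ref{neg1} that goes \emph{into} $k^*$); your subsequent quadratically-closed/euclidian argument in fact uses exactly these kernel statements, so the logic goes through unchanged.
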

\begin{proof}
 From Proposition \ref{Nie} we have that $H^{2\sigma-1,\sigma}_{C _2}(k,\Z)=k^*$, $H^{2\sigma,\sigma}_{C _2}(k,\Z)=\Z/2$ and $H^{2\sigma+m,\sigma}_{C _2}(k,\Z)=0$ for $m\neq -1,0$. We have 
 $$0\rightarrow H^{2\sigma-2,\sigma}_{C _2}(k,\Z)=0\rightarrow H^{3\sigma-2,\sigma}_{C _2}(k,\Z)\rightarrow $$
 $$\rightarrow H^{1,1}_{C _2}(k,\Z)=k^*\stackrel{\gamma}\rightarrow H^{2\sigma-1,\sigma}_{C _2}(k,\Z)=k^*\rightarrow H^{3\sigma-1,\sigma}_{C _2}(k,\Z)\rightarrow 0$$ 
 with the map $\gamma$ is given by multiplication by $2$. This follows from the fact that $H^{2\sigma-1,\sigma}_{C _2}(k,\Z)\rightarrow H^{1,1}_{C _2}(k,\Z)$ is the identity map (Proposition \ref{sigma}) and from the following diagram 
 $$
\xymatrix{
\rH^{3\sigma-1,\sigma} _{C _2}(S^\sigma)\ar[r]^\iso\ar[d] _{\iso}& \rH^{3\sigma-1, \sigma}_{C_2}(C_{2\,+}\wedge S^1) \ar[r]^{\gamma}\ar[d] & \rH^{3\sigma-1, \sigma}_{C_{2}}(S^\sigma) \ar[d]\ar[d]_{\iso} \\
\rH^{2,1}_{\mcal{M}}(S^1) \ar[r] & \rH^{2,1}_{\mcal{M}}(S^1\vee S^1) \ar[r] &
\rH^{2,1}_{\mcal{M}}(S^1).
} 
$$

 Then $H^{3\sigma-2,\sigma}_{C _2}(k,\Z)=\Z/2$ and $H^{3\sigma-1,\sigma}_{C _2}(k,\Z)=k^*/k^{*2}$. We also have $H^{3\sigma,\sigma}_{C _2}(k,\Z)=H^{2\sigma,\sigma}_{C _2}(k,\Z)=\Z/2$. For $n=4$ we have 
 $$0\rightarrow H^{3\sigma-3,\sigma}_{C _2}(k,\Z)=0\rightarrow H^{4\sigma-3,\sigma}_{C _2}(k,\Z)\rightarrow $$
 $$\rightarrow H^{1,1}_{C _2}(k,\Z)=k^*\stackrel{\alpha}{\rightarrow} H^{3\sigma-2,\sigma}_{C _2}(k,\Z)=\Z/2\rightarrow H^{4\sigma-2,\sigma}_{C _2}(k,\Z)\rightarrow 0.$$
 The map $\alpha$ is zero for a quadratically closed field or a euclidian field because  $H^{2\sigma-2,\sigma}_{C _2}(k,\Z)=0\rightarrow H^{3\sigma-2,\sigma}_{C _2}(k,\Z)=\Z/2\rightarrow H^{1,1}_{C _2}(k,\Z)$ is injective and we have the following diagram 
$$
\xymatrix{
\rH^{4\sigma-2,1} _{C _2}(S^\sigma)\ar@{^{(}->}[r]\ar@{^{(}->}[d]& \rH^{4\sigma-2, 1}_{C_2}(C_{2\,+}\wedge S^1) \ar[r]^{\alpha}\ar[d] & \rH^{4\sigma-2, 1}_{C_{2}}(S^\sigma) \ar[d]\ar@{^{(}->}[d] \\
\rH^{2,1}_{\mcal{M}}(S^1) \ar[r] & \rH^{2,1}_{\mcal{M}}(S^1\vee S^1) \ar[r] &
\rH^{2,1}_{\mcal{M}}(S^1).
} 
$$

This implies that $H^{4\sigma-3,\sigma}_{C _2}(k,\Z)=k^*$ and $H^{4\sigma-2,\sigma}_{C _2}(k,\Z)=\Z/2$. Also $H^{4\sigma,\sigma}_{C _2}(k,\Z)=H^{2\sigma,\sigma}_{C _2}(k,\Z)=\Z/2$,  $H^{4\sigma-1,\sigma}_{C _2}(k,\Z)=H^{3\sigma-1,\sigma}_{C _2}(k,\Z)=k^*/k^{*2}$. For $n=5$ we have 
$$0\rightarrow H^{4\sigma-4,\sigma}_{C _2}(k,\Z)=0\rightarrow H^{5\sigma-4,\sigma}_{C _2}(k,\Z)\rightarrow $$
$$\rightarrow H^{1,1}_{C _2}(k,\Z)=k^*\stackrel{\beta}\rightarrow H^{4\sigma-3,\sigma}_{C _2}(k,\Z)=k^*\rightarrow H^{5\sigma-3,\sigma}_{C _2}(k,\Z)\rightarrow 0$$
with the map $\beta$ multiplication by $2$ because $H^{3\sigma-3,\sigma}_{C _2}(k,\Z)=0\rightarrow H^{4\sigma-3,\sigma} _{C _2}(k,\Z)\simeq H^{1,1}(k,\Z)$ and we have the following diagram
$$
\xymatrix{
\rH^{5\sigma-3,\sigma} _{C _2}(S^\sigma)\ar[r]^\iso\ar[d] _{\iso}& \rH^{5\sigma-3, \sigma}_{C_2}(C_{2\,+}\wedge S^1) \ar[r]^{\beta}\ar[d] & \rH^{5\sigma-3, \sigma}_{C_{2}}(S^\sigma) \ar[d]\ar[d]_{\iso} \\
\rH^{2,1}_{\mcal{M}}(S^1) \ar[r] & \rH^{2,1}_{\mcal{M}}(S^1\vee S^1) \ar[r] &
\rH^{2,1}_{\mcal{M}}(S^1).
} 
$$
with the lower horizontal composition being multiplication by 2. It implies that  $H^{5\sigma-4,\sigma}_{C _2}(k,\Z)=\Z/2$ and $H^{5\sigma-3,\sigma}_{C _2}(k,\Z)=k^*/k^{*2}$. By previous computations we obtain $H^{5\sigma-2,\sigma}_{C _2}(k,\Z)=\Z/2$, $H^{5\sigma-1,\sigma}_{C _2}(k,\Z)=k^*/k^{*2}$, $H^{5\sigma,\sigma}_{C _2}(k,\Z)=\Z/2$. 

By induction it implies that if $n$ is even then $H^{n\sigma,\sigma}_{C _2}(k,\Z)=\Z/2$, $H^{n\sigma-1,\sigma}_{C _2}(k,\Z)=k^*/k^{*2}$,
..., $H^{n\sigma-n+2,\sigma}_{C _2}(k,\Z)=\Z/2$, $H^{n\sigma-n+1,\sigma}_{C _2}(k,\Z)=k^*$. If $n$ is odd then $H^{n\sigma,\sigma}_{C _2}(k,\Z)=\Z/2$, $H^{n\sigma-1,\sigma}_{C _2}(k,\Z)=k^*/k^{*2}$,
..., $H^{n\sigma-n+2,\sigma}_{C _2}(k,\Z)=k^*/k^{*2}$, $H^{n\sigma-n+1,\sigma}_{C _2}(k,\Z)=\Z/2$.

The case $n=0$ was discussed in Proposition \ref{sigma}.
\end{proof}

\begin{corollary}\label{sif} Let $k$ be a field of characteristic zero and $n>0$. If $k$ is a euclidian field or a quadratically closed field then the negative cone is
$$
H^{m-n\sigma,\sigma}_{C _2}(k, \Z/2) =
\begin{cases}
k^*/k^{*2}\times\Z/2 & 2\leq m\leq n \\
k^*/k^{*2}& 2\leq  m=n+1, m=1\\
0& otherwise\\ 
\end{cases}.
$$
and the positive cone is 
$$
H^{n\sigma-m,\sigma}_{C _2}(k, \Z/2) =
\begin{cases}
k^*/k^{*2}\times\Z/2 & 1\leq m\leq n-1 \\
\Z/2 & 1\leq m=n, m=0\\
0& otherwise\\ 
\end{cases}.
$$
If $n=0$ we have that $$
H^{0,\sigma}_{C _2}(k, \Z/2) =
\begin{cases}
\Z/2 & -1\in k^{*2} \\
0 & -1\notin k^{*2}\\ 
\end{cases}.
$$
and $H^{1,\sigma}(k,\Z/2)=k^{*2}/k^{*4}$ and $H^{n,\sigma}(k,\Z/2)=0$ for $n\neq 0,1$.
\end{corollary}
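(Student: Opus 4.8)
The plan is to obtain the $\Z/2$-coefficient groups from the integral computations already in hand — Theorem \ref{sigm} for the negative cone and Theorem \ref{pos1} for the positive cone — by the split universal coefficients sequence, precisely as was done in Proposition \ref{sigma} and Proposition \ref{prop}. The coefficient sequence $0\to\Z\xrightarrow{2}\Z\to\Z/2\to 0$ gives, for every cohomological index and fixed weight $\sigma$, a (noncanonically) split short exact sequence
$$0\to H^{a+p\sigma,\sigma}_{C _2}(k,\Z)\otimes\Z/2\to H^{a+p\sigma,\sigma}_{C _2}(k,\Z/2)\to {_2}H^{a+1+p\sigma,\sigma}_{C _2}(k,\Z)\to 0,$$
so that each $\Z/2$-group is the direct sum of the mod-$2$ reduction of the corresponding integral group and the $2$-torsion of the integral group one cohomological degree higher. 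Since the negative cone is written $H^{m-n\sigma,\sigma}$ (so $a=m$) the connecting map sends $m\mapsto m+1$, while for the positive cone $H^{n\sigma-m,\sigma}$ (so $a=-m$) it sends $m\mapsto m-1$; the case $n=0$ will simply be quoted from Proposition \ref{sigma}(b).

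First I would record the arithmetic facts used throughout, all in characteristic $0$: $k^*\otimes\Z/2=k^*/k^{*2}$, the torsion ${_2}(k^*)=\{\pm 1\}\cong\Z/2$, and the observation that $k^*/k^{*2}$ and $\Z/2$ are $\Z/2$-vector spaces, hence coincide with their own reductions and $2$-torsion subgroups. For a quadratically closed field $k^*/k^{*2}=0$ and for a euclidian field $k^*/k^{*2}\cong\Z/2$, but in both cases ${_2}(k^*)\cong\Z/2$. Substituting the integral values of Theorems \ref{sigm} and \ref{pos1} into the displayed sequence and reading off degree by degree then yields the result. The mechanism producing the uniform entry $k^*/k^{*2}\times\Z/2$ on the interior of each cone is that for such an $m$ one of the two summands is contributed as a reduction and the other as a $2$-torsion term coming from the adjacent integral degree, with their roles interchanged according to the parity of $m$; at the extreme degrees exactly one of the two contributions survives, giving the boundary entries $k^*/k^{*2}$ (for example the top group $k^*$ of Theorem \ref{sigm}, whose reduction is $k^*/k^{*2}$ and which has no torsion entering from above) and $\Z/2$.

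The step I expect to be the only real obstacle is the combinatorial bookkeeping forced by the interaction of two adjacent integral degrees: every $\Z/2$-group is glued from a reduction term and a $2$-torsion term living in different cohomological degrees, so one must check that the parity constraints and the range endpoints in Theorems \ref{sigm} and \ref{pos1} line up to produce the stated closed form, separately for the four cone/parity combinations. The most delicate instance is the quadratically closed case, where $k^*/k^{*2}=0$ annihilates every reduction term built from $k^*$, yet the nonzero torsion ${_2}(k^*)\cong\Z/2$ still injects a copy of $\Z/2$ one degree down; it is exactly this torsion that supplies the $\Z/2$ summands of the answer (for instance the $\Z/2$ at $m=n$ in the negative cone), so discarding it would give the wrong groups. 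Once this is set up the remaining verification is a finite, mechanical check.
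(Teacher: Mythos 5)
Your proposal is correct and takes essentially the same route as the paper's own proof: both apply the split universal coefficient sequence $0\to H^{*,\sigma}_{C_2}(k,\Z)\otimes\Z/2\to H^{*,\sigma}_{C_2}(k,\Z/2)\to {_2H}^{*+1,\sigma}_{C_2}(k,\Z)\to 0$ to the integral computations of Theorem \ref{sigm} and Theorem \ref{pos1}, with the same index shifts ($m\mapsto m+1$ in the negative cone, $m\mapsto m-1$ in the positive cone), the same arithmetic inputs ($k^*\otimes\Z/2=k^*/k^{*2}$, ${_2}k^*\cong\Z/2$ in characteristic zero), and the same appeal to Proposition \ref{sigma} for $n=0$. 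Your observation that in the quadratically closed case the surviving $\Z/2$ summands come from the $2$-torsion ${_2}k^*$ rather than from reductions is precisely the bookkeeping the paper carries out in its case-by-case verification.
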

\begin{proof} Universal coefficients sequence gives a split s.e.s
$$0\rightarrow H^{n\sigma-m,\sigma} _{C _2}(k,\Z)\otimes \Z/2\rightarrow H^{n\sigma-m,\sigma} _{C _2}(k,\Z/2)\rightarrow  {_2H}^{n\sigma-m+1,\sigma} _{C _2}(k,\Z)\rightarrow 0.$$
We have $H^{n\sigma-m,\sigma} _{C _2}(k,\Z/2)\simeq H^{n\sigma-m,\sigma} _{C _2}(k,\Z)\otimes \Z/2\oplus {_2H}^{n\sigma-m+1,\sigma} _{C _2}(k,\Z)$. 

Suppose $n$ is even. If $1\leq m< n-1$ and $m$ odd then $0\leq m-1<n-2$ with $m-1$ even. Then according to Theorem \ref{pos1} we have $H^{n\sigma-m,\sigma} _{C _2}(k,\Z/2)\simeq k^*/k^{*2}\oplus \Z/2$. In the case $m=n-1$ we have that $H^{n\sigma-m,\sigma} _{C _2}(k,\Z/2)\simeq k^*\otimes\Z/2\oplus {\Z/2}\simeq k^*/k^{*2}\oplus \Z/2$.

If $2\leq m< n-1$ and $m$ even then $1\leq m-1\leq n-2$ with $m-1$ odd. Then $H^{n\sigma-m,\sigma} _{C _2}(k,\Z/2)\simeq \Z/2\oplus  k^*/k^{*2}$. 
If $1\leq m=n$ then $H^{n\sigma-m,\sigma} _{C _2}(k,\Z/2)\simeq  {_2}k^*\simeq \Z/2$ and if $m=0$ then $H^{n\sigma-m,\sigma} _{C _2}(k,\Z/2)\simeq \Z/2$.

Suppose $n$ is odd. If $1\leq m< n-1$ and $m$ odd then $0\leq m-1<n-2$ with $m-1$ even. The decomposition is in this case  $H^{n\sigma-m,\sigma} _{C _2}(k,\Z/2)\simeq k^*/k^{*2}\oplus \Z/2$. If $2\leq m< n-1$ and $m$ even then $1\leq m-1\leq n-2$ with $m-1$ odd. The decomposition is in this case $H^{n\sigma-m,\sigma} _{C _2}(k,\Z/2)\simeq \Z/2\oplus k^*/k^{*2}$. If $1\leq m=n$ then the decomposition is $H^{n\sigma-m,\sigma} _{C _2}(k,\Z/2)\simeq \Z/2$ and if $m=0$ the decomposition is $H^{n\sigma-m,\sigma} _{C _2}(k,\Z/2)\simeq \Z/2$. 

For the negative cone we have $$H^{m-n\sigma,\sigma} _{C _2}(k,\Z/2)\simeq H^{m-n\sigma,\sigma} _{C _2}(k,\Z)\otimes \Z/2\oplus {_2H}^{m+1-n\sigma,\sigma} _{C _2}(k,\Z).$$ The result follows from Theorem \ref{sigm}. If $n\geq 2$ is even then we have the following cases. If $2\leq m\leq n-1$, $m$ even then $2<m+1\leq n$, $m+1$ odd and the decomposition is $H^{m-n\sigma,\sigma} _{C _2}(k,\Z/2)\simeq k^*/k^{*2}\oplus \Z/2$. If $2<m<n$, $m$ odd then $2<m+1<n+1$, $m+1$ even and the decomposition is  $H^{m-n\sigma,\sigma} _{C _2}(k,\Z/2)\simeq \Z/2\oplus k^*/k^{*2}$. If $1\leq m=n+1$ then the decomposition is  $H^{m-n\sigma,\sigma} _{C _2}(k,\Z/2)\simeq k^*/k^{*2}$. If $m=1$ then the decomposition is $H^{m-n\sigma,\sigma} _{C _2}(k,\Z/2)\simeq k^*/k^{*2}$.

If $n\geq 2$ is odd then we have the following cases. If $2\leq m\leq n$ and $m$ is even then the decomposition is $H^{m-n\sigma,\sigma} _{C _2}(k,\Z/2)=k^*/k^{*2}\oplus \Z/2$. If $2<m\leq n$, $m$ odd then the decomposition is $H^{m-n\sigma,\sigma} _{C _2}(k,\Z/2)=\Z/2\oplus k^*/k^{*2}$. If $m=1$ then $H^{m-n\sigma,\sigma} _{C _2}(k,\Z/2)=k^*/k^{*2}$
and if $1\leq m=n+1$ then $H^{m-n\sigma,\sigma} _{C _2}(k,\Z/2)=k^*/k^{*2}$.

\end{proof}
Notice that for a quadratically closed field the Bredon motivic cohomology groups $H^{a+p\sigma,1} _{C _2}(k,\Z/2)$ and $H^{a+p\sigma,\sigma} _{C _2}(k,\Z/2)$ are computed as Bredon cohomology  group $H^{a,p} _{Br}(pt,\Z/2)$. The positive cones of Bredon motivic cohomology of weight $1$ and $\sigma$ of a quadratically closed field coincide (this is true also for the negative cones). 

\section{Borel equivariant motivic cohomology in weights $0,1,\sigma$}
 In \cite{HOV1} we extended Edidin-Graham equivariant higher Chow groups to an equivariant theory represented in the stable equivariant $\A^1-$homotopy category and bigraded by virtual $C _2-$representations. In Proposition 4.1 \cite{HOV1} we showed that $$CH^b _{C _2}(X,2b-a,A)\simeq H^{a,b} _{C _2}(X\times \EG C _2,A)$$ for any integers $a,b$ and abelian group $A$. 
 
 It implies that Borel equivariant motivic cohomology groups  $H^{a+p\sigma,b+q\sigma}_{C _2}(X\times \EG C _2,A)$ are an extension of Edidin-Graham equivariant higher Chow groups introduced in \cite{EG}. For weights $0,1,\sigma$  and an abelian group $A$ we have the following computation of the Borel equivariant motivic cohomology of a field:
 \begin{proposition} \label{borel} a) $H^{a+p\sigma,0} _{C _2}(\EG C _2,A)=H^{a+p\sigma,0}_{C _2}(k,A)$.
 
 b) $H^{a+p\sigma,\sigma}_{C _2}(\EG C _2,A)=H^{a+p\sigma,\sigma}_{C _2}(k,A)$.
 
 c) $H^{a+p\sigma,1}_{C _2}(\EG C _2,A)=H^{a+p\sigma,1}_{C _2}(k,A)$ if $a\neq 2,3$.
 
 d) $H^{a+p\sigma,1}_{C _2}(\EG C _2,A)=H^{a-2+(p+2)\sigma,\sigma}_{C _2}(\EG C _2,A)=H^{a-2+(p+2)\sigma,\sigma}_{C _2}(k,A)$ if $a=2,3$. 
 \end{proposition}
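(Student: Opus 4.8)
The engine of the argument is the long exact sequence obtained by applying Bredon motivic cohomology to the motivic isotropy cofiber sequence $\EG C_{2+}\to S^0\to \EGt C_2$. Since cohomology is contravariant, for each weight $w\in\{0,1,\sigma\}$, each integer $p$, and each abelian group $A$ it reads
\[
\cdots\to \rH^{a+p\sigma,w}_{C_2}(\EGt C_2,A)\xrightarrow{j^{*}} H^{a+p\sigma,w}_{C_2}(k,A)\xrightarrow{r} H^{a+p\sigma,w}_{C_2}(\EG C_2,A)\xrightarrow{\partial}\rH^{a+1+p\sigma,w}_{C_2}(\EGt C_2,A)\to\cdots,
\]
where $r$ is the Borel restriction map induced by $\EG C_2\to \spec k$ and we use that the reduced cohomology of $\EG C_{2+}$ is the unreduced cohomology of $\EG C_2$. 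Parts (a), (b), (c) assert that $r$ is an isomorphism, while (d) is a formal rewriting. Part (a) is immediate: by Proposition \ref{case0} both flanking groups $\rH^{a+p\sigma,0}_{C_2}(\EGt C_2,A)$ and $\rH^{a+1+p\sigma,0}_{C_2}(\EGt C_2,A)$ vanish for all indices and all $A$, so $r$ is an isomorphism in weight $0$.

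For part (c) I would feed in Proposition \ref{case1}, together with its evident generalization to coefficients in $A$, which concentrates the reduced weight-$1$ cohomology of $\EGt C_2$ in cohomology degree $a=3$. Hence whenever $a\neq 2,3$ both flanking groups $\rH^{a+p\sigma,1}_{C_2}(\EGt C_2,A)$ and $\rH^{a+1+p\sigma,1}_{C_2}(\EGt C_2,A)$ vanish and the sequence forces $r$ to be an isomorphism, which is exactly (c). Part (d) is then a formal consequence of (b): the periodicity $(2\sigma-2,\sigma-1)$ of $\EG C_2$ from Proposition 2.9 in \cite{HOV1} gives a natural isomorphism $H^{a+p\sigma,1}_{C_2}(\EG C_2,A)\iso H^{a-2+(p+2)\sigma,\sigma}_{C_2}(\EG C_2,A)$, after which part (b) rewrites the target as $H^{a-2+(p+2)\sigma,\sigma}_{C_2}(k,A)$. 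I would record this only for $a=2,3$, since for every other $a$ part (c) already identifies the weight-$1$ Borel group directly with the weight-$1$ Bredon group of $k$.

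The essential step, and the one I expect to be the main obstacle, is part (b). Here the flanking terms are the reduced weight-$\sigma$ groups $\rH^{\ast+p\sigma,\sigma}_{C_2}(\EGt C_2,A)$, and since $r$ is claimed to be an isomorphism in every degree, exactness forces these groups to vanish for all $\ast$, $p$ and $A$. The delicate point is that weight $\sigma$ and weight $1$ lie in the same $(b+q)$-class, and weight $1$ does produce a nonzero group; thus one cannot transport the weight-$1$ computation, and the required vanishing must genuinely see the difference between the sign-Tate sphere $S^{\sigma}_t$ and the Tate sphere $S^1_t$. Indeed the whole contrast with (c) is that $\EGt C_2$ has no weight-$\sigma$ cohomology while it does carry weight-$1$ cohomology, which is precisely why (b) holds in all degrees whereas (c) fails at $a=2,3$.

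To establish the weight-$\sigma$ vanishing I would work directly with the isotropy tower $\A(n\sigma)\setminus\{0\}_+\to S^0\to S^{2n\sigma,n\sigma}$ defining $\EGt C_2=\colim_n S^{2n\sigma,n\sigma}$, and compute $\rH^{\ast+p\sigma,\sigma}_{C_2}(\EGt C_2,A)$ through the Milnor sequence as the derived limit of the reduced weight-$\sigma$ cohomologies of the spheres $S^{2n\sigma,n\sigma}$. Each such group is a Bredon cohomology group of $\spec k$ in an increasingly negative twist, which I would evaluate using the weight-$\sigma$ and negative-twist computations of Section $4$ (and Proposition \ref{Nie} where the twists remain effective); the aim is to show that these groups, together with the transition maps between successive stages — whose connecting homomorphisms are governed by the multiplication-by-$2$ maps of Proposition \ref{comp} — leave no surviving class and no $\lim^1$ contribution. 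The danger zone is the range of degrees near $a=3$, where the weight-$1$ analogue is nonzero: I must verify that the weight-$\sigma$ tower really does stabilize to zero there. Once this vanishing is in hand, $r$ is an isomorphism in weight $\sigma$ for all $a$ and $p$, giving (b) and hence (d); that final bookkeeping is routine, so the entire weight-$\sigma$ calculation of $\EGt C_2$ is where the real content lies.
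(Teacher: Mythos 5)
Your handling of a), c) and d) is exactly the paper's argument: the isotropy long exact sequence together with Proposition \ref{case0} gives a), together with Proposition \ref{case1} it gives c) away from $a=2,3$, and d) is the $(2\sigma-2,\sigma-1)$-periodicity of $\EG C_2$ composed with b). The problem is b). You correctly identify that b) is equivalent to the vanishing $\rH^{a+p\sigma,\sigma}_{C_2}(\EGt C_2,A)=0$ for all $a,p,A$, and your suspicion about periodicity transport is well founded: the two periodicities of $\EGt C_2$ recalled in Section 2 differ by a pure weight shift of $\sigma-1$, which would identify weight $\sigma$ with weight $1$ and, by Proposition \ref{case1}, produce a nonzero group in degree $3$ --- i.e.\ it would contradict b). The paper itself disposes of b) in one line by appealing to the periodicities of $\EGt C_2$ (read so as to identify weight $\sigma$ with weight $0$, where Proposition \ref{case0} applies). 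But your proposal then stops: the Milnor-sequence computation over the tower $S^{2n\sigma,n\sigma}$ is stated as an ``aim'', with the degrees near $a=3$ explicitly flagged as unverified. Since this vanishing is the entire content of b) (and, via d), of half the proposition), your write-up has a genuine gap at precisely the step you yourself call the real content.

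The gap can be closed along your own route, but not with the tools you cite: Proposition \ref{Nie} requires $b,q\geq 0$; the Section 4 computations concern weight $0+0\sigma$ (the complexes $Z_{top}(n\sigma)$), not the weights appearing in your tower; and Proposition \ref{comp} never enters, because the transition maps turn out to be irrelevant. Concretely, $\rH^{a+p\sigma,\sigma}_{C_2}(S^{2n\sigma,n\sigma},A)=H^{a+(p-2n)\sigma,(1-n)\sigma}_{C_2}(k,A)$, and for $n\geq 2$ the weight $(1-n)\sigma$ satisfies $b+q<0$, so by the unnumbered proposition following Proposition \ref{van} this group is independent of the $\sigma$-part of the cohomological index and equals $\rH^{a}_{C_2}(T^{(n-1)\sigma},A(0))$. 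The cofiber sequence $\A((n-1)\sigma)\setminus\{0\}_+\to S^0\to T^{(n-1)\sigma}$, the freeness of the action on $\A((n-1)\sigma)\setminus\{0\}$ (so its weight-$0$ equivariant cohomology is that of the connected quotient: $A$ in degree $0$ and $0$ otherwise), and the fact that $H^{0,0}_{C_2}(k,A)\to H^{0,0}$ of that quotient is an isomorphism, force $\rH^{a}_{C_2}(T^{(n-1)\sigma},A(0))=0$ for every $a$ --- this is the argument of Proposition \ref{case0} run at finite level. Hence every term of the tower with $n\geq 2$ vanishes, so $\lim$ and $\lim^{1}$ vanish with no analysis of transition maps or of any ``danger zone'' near $a=3$, and b), hence d), follows. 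With that paragraph supplied, your proof is complete and is, if anything, more self-contained than the paper's appeal to periodicities.
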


\begin{proof} For a) we have that the equivariant motivic isotropy sequence $\EG C_2\rightarrow S^0\rightarrow \EGt C _2$  gives the following l.e.s. 
$$\rH^{a+p\sigma,0}_{C_2}(\EGt C _2,A)=0\rightarrow H^{a+p\sigma,0}_{C_2}(k,A)\rightarrow H^{a+p\sigma,0}_{C_2}(\EG C _2)\rightarrow \rH^{a+1+p\sigma,0}_{C_2}(\EGt C_2)=0.$$
The same motivic isotropy sequence gives 
$$H^{2+p\sigma,1}_{C_2}(\EG C_2,A)\rightarrow \rH^{3+p\sigma,1}_{C_2}(\EGt C _2,A)\rightarrow H^{3+p\sigma,1}_{C_2}(k,A)\rightarrow H^{3+p\sigma,1}_{C_2}(\EG C_2).$$
with  $\rH^{3+p\sigma,1}_{C_2}(\EGt C _2,A)=\Z/2$ and  $\rH^{n+p\sigma,1}_{C_2}(\EGt C _2,A)=0$ for $n\neq 3$ (Proposition \ref{case1}).

We have from the same isotropy sequence that $H^{a+p\sigma,\sigma}_{C _2}(\EG C _2,A)=H^{a+p\sigma,\sigma}_{C _2}(k,A)$ because of the periodicities of Bredon motivic cohomology of $\EGt C _2$. This implies part b).

We also have a $(2\sigma-2,\sigma-1)$ periodicity for Bredon motivic cohomology groups of $\EG C _2$. This implies part d). 
\end{proof}
Edidin-Graham higher Chow groups of $\EG C_2$ in weight 0 and 1 are $CH^1 _{C _2}(\EG C_2,0,\Z)=CH^1_{C_2}(\BG C _2)=\Z/2= H^{2\sigma,\sigma} _{C _2}(k,\Z)$ (from d) above), 
$CH^1 _{C _2}(\EG C_2,1,\Z)=k^*=H^{1,1} _{C _2}(k,\Z)$ (from c) above) and $CH^0 _{C _2}(\EG C_2,0,\Z)=\Z=H^{0,0}_{C _2}(k,\Z)$, $CH^0 _{C _2}(\EG C_2,-a,\Z)=0=H^{a,0}_{C _2}(k,\Z)$, for $a\neq 0$ (from a) above).

 Notice that the canonical map $H^{a+p\sigma,b+q\sigma}(k,\Z)\rightarrow H^{a+p\sigma,b+q\sigma}(EC _2,\Z)$ is not necessarily an isomorphism (for example for $a=2$, $b=1$, $q=0$, $p=0$ the map is $0\rightarrow \Z/2$).

 \bibliographystyle{plain}
 \bibliography{road}
 \end{document}